\title{Induced Ramsey-type theorems}
\author{
Jacob Fox\thanks{Department of Mathematics, Princeton, Princeton, NJ.
Email: {\tt
jacobfox@math.princeton.edu}. Research supported by an NSF Graduate
Research Fellowship and a Princeton Centennial Fellowship.} \and
Benny Sudakov\thanks{Department of Mathematics, Princeton,
Princeton, NJ. Email: {\tt bsudakov@math.princeton.edu}. Research
supported in part by NSF CAREER award DMS-0546523, NSF grant
DMS-0355497 and by a USA-Israeli BSF grant.}}
\newenvironment{proof}
      {\medskip\noindent{\bf Proof.}\hspace{1mm}}
      {\hfill$\Box$\medskip}
\def\qed{\ifvmode\mbox{ }\else\unskip\fi\hskip 1em plus 10fill$\Box$}
\newtheorem{theorem}{Theorem}[section]
\newtheorem{lemma}[theorem]{Lemma}
\newtheorem{proposition}[theorem]{Proposition}
\newtheorem{corollary}[theorem]{Corollary}
\newtheorem{conjecture}[theorem]{Conjecture}
\newtheorem{claim}[theorem]{Claim}
\newtheorem{definition}[theorem]{Definition}
\begin{document}
\date{}

\maketitle

\begin{abstract}
We present a unified approach to proving Ramsey-type theorems for
graphs with a forbidden induced subgraph which can be used to extend
and improve the earlier results of R\"odl, Erd\H{o}s-Hajnal,
Pr\"omel-R\"odl, Nikiforov, Chung-Graham, and \L uczak-R\"odl. The
proofs are based on a simple lemma (generalizing one by Graham,
R\"odl, and Ruci\'nski) that can be used as a replacement for
Szemer\'edi's regularity lemma, thereby giving much better bounds.
The same approach can be also used to show that pseudo-random graphs
have strong induced Ramsey properties. This leads to explicit
constructions for upper bounds on various induced Ramsey numbers.
\end{abstract}

\section{Background and Introduction}

Ramsey theory refers to a large body of deep results in mathematics
concerning partitions of large structures. Its underlying philosophy
is captured succinctly by the statement that ``In a large system,
complete disorder is impossible.'' This is an area in which a great
variety of techniques from many branches of mathematics are used and
whose results are important not only to graph theory and
combinatorics but also to logic, analysis, number theory, and
geometry. Since the publication of the seminal paper of Ramsey
\cite{Ra} in 1930, this subject has grown with increasing vitality,
and is currently among the most active areas in combinatorics.

For a graph $H$, the {\it Ramsey number} $r(H)$ is the least
positive integer $n$ such that every two-coloring of the edges of
the complete graph $K_n$ on $n$ vertices contains a monochromatic copy
of $H$. Ramsey's theorem states that $r(H)$ exists for every graph
$H$. A classical result of Erd\H{o}s and Szekeres~\cite{ErSz}, which
is a quantitative version of Ramsey's theorem, implies that $r(K_k)
\leq 2^{2k}$ for every positive integer $k$. Erd\H{o}s~\cite{Er}
showed using probabilistic arguments that $r(K_k) > 2^{k/2}$ for $k
> 2$. Over the last sixty years, there has been several
improvements on the lower and upper bounds of $r(K_k)$, the most
recent by Conlon \cite{Co}. However, despite efforts by various
researchers, the constant factors in the exponents of these bounds
remain the same.

A subset of vertices of a graph is {\it homogeneous} if it is either
an independent set (empty subgraph) or a clique (complete subgraph).
For a graph $G$, denote by $\hom(G)$ the size of the largest
homogeneous subset of vertices of $G$. A restatement of the
Erd\H{o}s-Szekeres result is that every graph $G$ on $n$ vertices
satisfies $\hom(G) \geq \frac{1}{2}\log n$, while the Erd\H{o}s
result says that for each $n\geq 2$ there is a graph $G$ on $n$
vertices with $\hom(G) \leq 2\log n$. (Here, and throughout the
paper, all logarithms are base $2$.) The only known proofs of the
existence of {\it Ramsey graphs}, i.e., graphs for which
$\hom(G)=O(\log n)$, come from various models of random graphs with
edge density bounded away from $0$ and $1$. This supports the belief
that any graph with small $\hom(G)$ looks `random' in one sense or
another. There are now several results which show that Ramsey graphs
have random-like properties.

A graph $H$ is an {\it induced subgraph} of a graph $G$ if $V(H)
\subset V(G)$ and two vertices of $H$ are adjacent if and only if
they are adjacent in $G$. A graph is {\it $k$-universal} if it
contains all graphs on at most $k$ vertices as induced subgraphs. A
basic property of large random graphs is that they almost surely are
$k$-universal. There is a general belief that graphs which are not
$k$-universal are highly structured. In particular, they should
contain a homogeneous subset which is much larger than that
guaranteed by the Erd\H{o}s-Szekeres bound for general graphs.

In the early 1970's, an important generalization of Ramsey's
theorem, known as the Induced Ramsey Theorem, was discovered
independently by Deuber \cite{De}, Erd\H{o}s, Hajnal, and Posa
\cite{ErHaPo}, and R\"odl \cite{Ro1}. It states that for every graph
$H$ there is a graph $G$ such that in every $2$-edge-coloring of $G$ there is
an induced copy of $H$ whose edges are monochromatic. The least positive integer
$n$ for which there is an $n$-vertex graph with this property is called the {\it induced
Ramsey number} $r_{\textrm{ind}}(H)$. All of the early proofs of the Induced
Ramsey Theorem give enormous upper bounds on $r_{\textrm{ind}}(H)$.
It is still a major open problem to prove good bounds on induced
Ramsey numbers. Ideally, we would like to understand conditions for
a graph $G$ to have the property that in every two-coloring of the
edges of $G$, there is an induced copy of graph $H$ that is
monochromatic.

In this paper, we present a unified approach to proving Ramsey-type
theorems for graphs with a forbidden induced subgraph which can be
used to extend and improve results of various researchers. The same
approach is also used to prove new bounds on induced Ramsey numbers.
In the few subsequent sections we present in full detail our
theorems and compare them with previously obtained results.

\subsection{Ramsey properties of $H$-free graphs}\label{hfree}
%\doublespacing

As we already mentioned, there are several results (see, e.g.,
\cite{ErSzem,She,AlKrSu,BuSu}) which indicate that Ramsey graphs,
graphs $G$ with relatively small $\hom(G)$, have random-like
properties. The first advance in this area was made by Erd\H{o}s and
Szemer\'edi \cite{ErSzem}, who showed that the Erd\H{o}s-Szekeres
bound $\hom(G) \geq \frac{1}{2}\log n$ can be improved for graphs
which are very sparse or very dense. The edge density of a graph $G$
is the fraction of pairs of distinct vertices of $G$ that are edges.
The Erd\H{o}s-Szemer\'edi theorem states that there is an absolute
positive constant $c$ such that $\hom(G) \geq \frac{c \log
n}{\epsilon \log \frac{1}{\epsilon}}$ for every graph $G$ on $n$
vertices with edge density $\epsilon \in (0,1/2)$. This result shows
that the Erd\H{o}s-Szekeres bound can be significantly improved for
graphs that contain a large subset of vertices that is very sparse
or very dense.

R\"odl \cite{Ro} proved
that if a graph is not $k$-universal with $k$
fixed, then it contains a linear-sized induced subgraph that is very
sparse or very dense.  A graph is called {\it $H$-free} if it does not
contain $H$ as an induced subgraph. More precisely, R\"odl's theorem says that for
each graph $H$ and $\epsilon \in (0,1/2)$, there is a positive
constant $\delta(\epsilon,H)$ such that every $H$-free graph on $n$
vertices contains an induced subgraph on at least
$\delta(\epsilon,H)n$ vertices with edge density either at most
$\epsilon$ or at least $1-\epsilon$. Together with the theorem of Erd\H{o}s and Szemeredi, it
shows that the Erd\H{o}s-Szekeres bound can be improved by any constant factor
for any family of graphs that have a forbidden induced subgraph.

R\"odl's proof uses Szemer\'edi's regularity lemma \cite{Sz}, a
powerful tool in graph theory, which was introduced by Szemer\'edi
 in his celebrated proof of the Erd\H{o}s-Tur\'an conjecture on long arithmetic progressions in
dense subsets of the integers. The regularity lemma roughly says
that every large graph can be partitioned into a small number of
parts such that the bipartite subgraph between almost every pair of
parts is random-like. To properly state the regularity lemma
requires some terminology. The edge density $d(X,Y)$ between two
subsets of vertices of a graph $G$ is the fraction of pairs $(x,y)
\in X \times Y$ that are edges of $G$, i.e.,
$d(X,Y)=\frac{e(X,Y)}{|X||Y|}$, where $e(X,Y)$ is the number of edges with one endpoint in $X$ and
the other in $Y$.
A pair $(X,Y)$ of vertex sets
is called $\epsilon$-regular if for every $X' \subset X$ and $Y'
\subset Y$ with $|X'| > \epsilon |X|$ and $|Y'| > \epsilon |Y|$ we
have $|d(X',Y')-d(X,Y)|<\epsilon$. A partition $V=\bigcup_{i=1}^k
V_i$ is called {\it equitable} if $\big||V_i|-|V_j|\big|\leq 1$ for
all $i,j$.

Szemer\'edi's regularity lemma \cite{Sz} states that for each
$\epsilon>0$, there is a positive integer $M(\epsilon)$ such that
the vertices of any graph $G$ can be equitably partitioned
$V(G)=\bigcup_{i=1}^k V_i$ into $k$ subsets with $\epsilon^{-1}
\leq k \leq M(\epsilon)$ satisfying that all but at most $\epsilon
k^2$ of the pairs $(V_i,V_j)$ are $\epsilon$-regular. For more
background on the regularity lemma, see the excellent
survey by Koml\'os and Simonovits \cite{KoSi}.

In the regularity lemma, $M(\epsilon)$ can be taken to be a tower of
$2$'s of height proportional to $\epsilon^{-5}$. On the other hand,
Gowers \cite{Go} proved a lower bound on $M(\epsilon)$ which is a
tower of $2$'s of height proportional to $\epsilon^{-\frac{1}{16}}$.
His result demonstrates that $M(\epsilon)$ is inherently large as a
function of $\epsilon^{-1}$. Unfortunately, this implies that the
bounds obtained by applications of the regularity lemma are often
quite poor. In particular, this is a weakness of the bound on
$\delta(\epsilon,H)$ given by R\"odl's proof of his theorem. It is
therefore desirable to find a new proof of R\"odl's theorem that
does not use the regularity lemma. The following theorem does just
that, giving a much better bound on $\delta(\epsilon,H)$. Its proof
works as well in a multicolor setting (see concluding remarks).

\begin{theorem} \label{main}
There is a constant $c$ such that for each $\epsilon \in (0,1/2)$
and graph $H$ on $k \geq 2$ vertices, every $H$-free graph on $n$
vertices contains an induced subgraph on at least $2^{-ck(\log
\frac{1}{\epsilon})^2}n$ vertices with edge density either at most
$\epsilon$ or at least $1-\epsilon$.
\end{theorem}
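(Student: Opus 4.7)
The plan is to argue by contradiction. Suppose $G$ is an $H$-free graph on $n$ vertices in which every induced subgraph of order at least $\rho n$ has edge density in the open interval $(\epsilon,1-\epsilon)$, where $\rho = 2^{-ck(\log(1/\epsilon))^2}$ for a suitable absolute constant $c$. Under this hypothesis I will produce an induced copy of $H$ inside $G$, a contradiction.

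The construction is a greedy, vertex-by-vertex embedding. Fix an ordering $v_1,\dots,v_k$ of $V(H)$ and maintain candidate sets $W_1,\dots,W_k\subseteq V(G)$, initially all equal to $V(G)$. At step $j$ one picks $u_j\in W_j$ and then updates each $W_i$ with $i>j$ by intersecting with $N(u_j)$ if $v_iv_j\in E(H)$ and with $V(G)\setminus (N(u_j)\cup\{u_j\})$ otherwise. Provided each $W_i$ is nonempty when step $i$ is reached, the resulting vertices $u_1,\dots,u_k$ induce a copy of $H$ in $G$.

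The crux is a selection lemma: at step $j$, find $u_j\in W_j$ such that, for every $i>j$, the restricted set $W_i\cap X_i$ (with $X_i=N(u_j)$ or $V(G)\setminus N(u_j)$ as prescribed by $H$) has size at least $\sigma|W_i|$, where $\sigma = 2^{-\Theta((\log 1/\epsilon)^2)}$. The natural strategy, generalizing the Graham--R\"odl--Ruci\'nski lemma alluded to in the abstract, is an iterated density-boosting: starting from the density lower bound $\epsilon$ furnished by the standing hypothesis, pass repeatedly to a common-neighborhood subset of the $W_i$'s in which the effective density roughly doubles, paying a multiplicative factor of order $\epsilon$ at each boosting. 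After about $\log(1/\epsilon)$ boostings the density climbs to a constant, at which point a straightforward averaging over the current set produces a single vertex $u_j$ meeting all the size constraints simultaneously. The cumulative loss per embedding step is $\epsilon^{\Theta(\log 1/\epsilon)} = 2^{-\Theta((\log 1/\epsilon)^2)}$, and iterating over the $k$ embedding steps yields the advertised lower bound $2^{-ck(\log 1/\epsilon)^2}n$ on the final candidate set.

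The main obstacle lies in the selection lemma itself. One must simultaneously control $O(k)$ candidate sets of possibly very different sizes, and each boosting step, by restricting to a common neighborhood, has to preserve sufficiently large intersections with all of them at once. The standing hypothesis that no subset of size at least $\rho n$ is very sparse or very dense is invoked repeatedly to guarantee that intermediate sets remain above the threshold where the hypothesis applies; a union bound over the $O(k)$ candidate sets at each step, and over the $k$ embedding steps, contributes only polynomial factors that are absorbed into the constant $c$. Verifying that these losses can indeed be tracked in a clean recursive manner — rather than compounding into an additional exponential factor — is the delicate quantitative point on which the improved bound rests.
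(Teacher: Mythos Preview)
Your approach has a genuine gap that is more than bookkeeping. The contradiction hypothesis --- that every induced subgraph on at least $\rho n$ vertices has edge density in $(\epsilon,1-\epsilon)$ --- controls only \emph{single-set} densities, whereas a vertex-by-vertex embedding needs \emph{cross-set} control: at step $j$ you must find $u_j\in W_j$ with prescribed intersection sizes with each $W_i$, and already after one step the candidate sets can be disjoint (say $W_2=N(u_1)$ and $W_3=V\setminus N(u_1)$). The hypothesis says nothing useful about the bipartite density $d(W_2,W_3)$. Concretely, take $V=A\cup B$ with $|A|=|B|=n/2$, let $G[A]$ and $G[B]$ be random of density $1/2$, and put no edges between $A$ and $B$; then every large subset has density in $[1/4-o(1),1/2+o(1)]$, so your hypothesis holds for any $\epsilon<1/4$, yet $d(A,B)=0$. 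Your ``density boosting'' sketch does not explain what quantity doubles or why: under the hypothesis, passing to any large subset merely gives density $\ge\epsilon$ again, not larger, and there is no evident mechanism by which restricting to a common neighbourhood raises the bipartite densities you actually need. So the selection lemma, on which the whole argument rests, is not established.

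The paper runs the logic in the opposite direction. It uses the $H$-free assumption \emph{directly}, via the Erd\H{o}s--Hajnal lemma (which is exactly your greedy embedding run until it fails), to produce two large disjoint sets $A,B$ between which the bipartite density is at most $\epsilon/4$ or at least $1-\epsilon/4$. The new step is a short recursion that converts bipartite sparseness into single-set sparseness: apply the inductive hypothesis with the relaxed parameter $3\epsilon/2$ inside $A$ to get $A'\subset A$ of density $\le 3\epsilon/2$ (or $\ge 1-\epsilon$, in which case one is done), then do the same inside the part of $B$ still sparse to $A'$ to get $B'$ of the same size; since $d(A',B')\le\epsilon/2$, the union $A'\cup B'$ has density $\le\epsilon$. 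One iterates this recursion $O(\log(1/\epsilon))$ times (the density parameter grows by a factor $3/2$ at each step until it reaches a constant, where the claim is trivial), picking up a factor $(\epsilon/4)^k k^{-1}$ from the Erd\H{o}s--Hajnal lemma at each iteration; multiplying these gives $2^{-O(k(\log 1/\epsilon)^2)}$. The point is the decoupling: the embedding argument is invoked once per iteration to extract \emph{bipartite} structure, and the passage from bipartite to single-set is a separate, purely density-theoretic recursion on the parameter $\epsilon$.
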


Nikiforov \cite{Ni} recently strengthened R\"odl's theorem by
proving that for each $\epsilon>0$ and graph $H$ of order $k$, there
are positive constants $\kappa=\kappa(\epsilon,H)$ and
$C=C(\epsilon,H)$ such that for every graph $G=(V,E)$ that
contains at most $\kappa|V|^{k}$ induced copies of $H$, there is an
equitable partition $V=\bigcup_{i=i}^C V_i$ of the vertex set such
that the edge density in each $V_i$ ($i \geq 1$) is at most
$\epsilon$ or at least $1-\epsilon$. Using the same technique as the
proof of Theorem \ref{main}, we give a new proof of this result
without using the regularity lemma, thereby solving the main open problem
posed in \cite{Ni}.

Erd\H{o}s and Hajnal \cite{ErHa} gave a significant improvement on
the Erd\H{o}s-Szekeres bound on the size of the largest homogeneous
set in $H$-free graphs. They proved that for every graph $H$ there
is a positive constant $c(H)$ such that $\hom(G) \geq
2^{c(H)\sqrt{\log n}}$ for all $H$-free graphs $G$ on $n$ vertices.
Erd\H{o}s and Hajnal further conjectured that every such $G$
contains a complete or empty subgraph of order $n^{c(H)}$. This
beautiful problem has received increasing attention by various
researchers, and was also featured by Gowers \cite{Go1} in his list
of problems at the turn of the century. For various partial results
on the Erd\H{o}s-Hajnal conjecture see, e.g., \cite{AlPaSo, ErHaPa,
FoSu, AlPaPiRaSh, FoPaTo2, LaMaPaTo, ChSa} and their references.

Recall that a graph is $k$-universal if it contains all graphs on at
most $k$ vertices as induced subgraphs. Note that the Erd\H{o}s-Hajnal
bound, in particular, implies that, for every fixed $k$,
sufficiently large Ramsey graphs are $k$-universal. This was
extended further by Pr\"omel and R\"odl \cite{PrRo}, who obtained an
asymptotically best possible result. They proved that if $\hom(G)
\leq c_1 \log n$ then $G$ is $c_2\log n$-universal for some constant
$c_2$ which depends on $c_1$.

Let $\hom(n,k)$ be the largest positive integer such that every
graph $G$ on $n$ vertices is $k$-universal or satisfies $\hom(G)
\geq \hom(n,k)$. The Erd\H{o}s-Hajnal theorem and the Promel-R\"odl
theorem both say that $\hom(n,k)$ is large for fixed or slowly
growing $k$. Indeed, from the first theorem it follows that for
fixed $k$ there is $c(k)>0$ such that $\hom(n,k) \geq
2^{c(k)\sqrt{\log n}}$, while the second theorem says that for each
$c_1$ there is $c_2>0$ such that $\hom(n,c_2\log n) \geq c_1 \log
n$. One would naturally like to have a general lower bound on
$\hom(n,k)$ that implies both the Erd\H{o}s-Hajnal and Promel-R\"odl
results. This is done in the following theorem.

\begin{theorem}\label{combined}
There are positive constants $c_3$ and $c_4$ such that for all
$n,k$, every graph on $n$ vertices is $k$-universal or satisfies
$\hom(G) \geq c_32^{c_4\sqrt{\frac{\log n}{k}}}\log n.$
\end{theorem}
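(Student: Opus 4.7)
The plan is to combine Theorem \ref{main} with the Erd\H{o}s--Szemer\'edi density bound. Since $G$ is not $k$-universal, there exists a graph $H$ on at most $k$ vertices such that $G$ is $H$-free, so Theorem \ref{main} applies to $G$ for any $\epsilon \in (0,1/2)$.

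First I would dispose of the regime $\log n \leq C k$ for a suitable constant $C$. In that regime the target bound $c_3 2^{c_4\sqrt{\log n/k}}\log n$ reduces to $O(\log n)$, and this follows from the classical Erd\H{o}s--Szekeres inequality $\hom(G)\geq \tfrac{1}{2}\log n$ after choosing $c_3$ small. So assume $\log n/k$ is at least some large absolute constant.

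Now set $\epsilon=2^{-\alpha\sqrt{\log n/k}}$, where $\alpha$ is a small positive constant chosen so that $c\alpha^{2}\leq 1/2$, with $c$ the constant appearing in Theorem \ref{main}. Then $k(\log \tfrac{1}{\epsilon})^{2}=\alpha^{2}\log n$, so Theorem \ref{main} yields an induced subgraph $G'$ of $G$ on
\[
N\;\geq\;2^{-c\alpha^{2}\log n}\,n\;=\;n^{1-c\alpha^{2}}\;\geq\;\sqrt{n}
\]
vertices whose edge density is either at most $\epsilon$ or at least $1-\epsilon$. By passing to the complement if necessary (which preserves homogeneous sets), we may assume the density of $G'$ is at most $\epsilon<1/2$.

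Finally I would apply the Erd\H{o}s--Szemer\'edi theorem to $G'$. It gives
\[
\hom(G)\;\geq\;\hom(G')\;\geq\;\frac{c'\log N}{\epsilon\log\frac{1}{\epsilon}}\;\geq\;\frac{c'(\log n)/2}{\alpha\sqrt{\log n/k}\;\cdot\;2^{-\alpha\sqrt{\log n/k}}}\;=\;\frac{c''\,2^{\alpha\sqrt{\log n/k}}\log n}{\sqrt{\log n/k}}.
\]
To absorb the polynomial factor $\sqrt{\log n/k}$ in the denominator, I would pick any $c_4<\alpha$; then for $\log n/k$ at least some absolute constant we have $2^{\alpha x}/x\geq 2^{c_4 x}$ with $x=\sqrt{\log n/k}$, yielding the desired bound $\hom(G)\geq c_3 2^{c_4\sqrt{\log n/k}}\log n$.

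The only delicate point is reconciling the two regimes and ensuring that the choice of $\alpha$ simultaneously makes Theorem \ref{main} give a subgraph of size $n^{\Omega(1)}$ (so $\log N=\Theta(\log n)$) and keeps $\epsilon$ small enough for Erd\H{o}s--Szemer\'edi to produce the exponential gain $2^{c_4\sqrt{\log n/k}}$. Once $\alpha$ is fixed small enough that $c\alpha^{2}\leq 1/2$, everything else is bookkeeping of constants.
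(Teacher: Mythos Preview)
Your proposal is correct and follows essentially the same route as the paper: apply Theorem~\ref{main} with $\epsilon=2^{-\alpha\sqrt{(\log n)/k}}$ to get a large very sparse or very dense induced subgraph, then apply the Erd\H{o}s--Szemer\'edi theorem to it. The only cosmetic difference is in absorbing the factor $\log(1/\epsilon)=\alpha\sqrt{(\log n)/k}$: the paper uses the one-line inequality $\epsilon\log(1/\epsilon)\le 4\sqrt{\epsilon}$ (thereby halving the exponent constant) instead of your regime split, but the substance is identical.
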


Theorem \ref{main} can be also used to answer a question
of Chung and Graham \cite{ChGr}, which was motivated by the study of quasirandom graphs.
Given a fixed graph $H$, it is well known that a typical graph
on $n$ vertices  contains many induced copies of $H$ as $n$ becomes large.
Therefore if a large graph $G$ contains no induced copy of $H$, its edge distribution should deviate from
``typical" in a rather strong way. This intuition was made rigorous in \cite{ChGr}, where the authors
proved that if a graph $G$ on $n$ vertices is not $k$-universal,
then there is a subset $S$ of $\lfloor \frac{n}{2} \rfloor$ vertices
of $G$ such that $|e(S)-\frac{1}{16}n^2|>2^{-2(k^2+27)}n^2$. For
positive integers $k$ and $n$, let $D(k,n)$ denote the largest
integer such that every graph $G$ on $n$ vertices that is not
$k$-universal contains a subset $S$ of vertices of size $\lfloor
\frac{n}{2} \rfloor$ with $|e(S)-\frac{1}{16}n^2|>D(k,n)$. Chung and
Graham asked whether their lower bound on $D(k,n)$ can be
substantially improved, e.g., replaced by $c^{-k}n^2$.
Using Theorem \ref{main} this can be easily done as follows.

A lemma of Erd\H{o}s, Goldberg, Pach, and Spencer \cite{ErGoPaSp}
implies that if a graph on $n$ vertices has a subset $R$ that
deviates by $D$ edges from having edge density $1/2$, then there is
a subset $S$ of size $\lfloor n/2 \rfloor$ that deviates by at least a
constant times $D$ edges from having edge density $1/2$. By Theorem
\ref{main} with $\epsilon=1/4$, there is a positive constant $C$
such that every graph on $n$ vertices that is not $k$-universal has
a subset $R$ of size at least $C^{-k}n$ with edge density at most
$1/4$ or at least $3/4$. This $R$
deviates from having edge density $1/2$ by at least
$$\frac{1}{4}{|R| \choose 2}\geq \frac{1}{16}|R|^2 \geq
\frac{1}{16}C^{-2k}n^2$$ edges. Thus, the above mentioned lemma from
\cite{ErGoPaSp} implies that there is an absolute constant $c$ such
that every graph $G$ on $n$ vertices which is not $k$-universal
contains a subset $S$ of size $\lfloor n/2 \rfloor$ with
$|e(S)-\frac{n^2}{16}|>c^{-k}n^2$. Chung and Graham also ask for
non-trivial upper bounds on $D(k,n)$. In this direction, we show
that there are $K_k$-free graphs on $n$ vertices for which
$|e(S)-\frac{1}{16}n^2|=O(2^{-k/4}n^2)$ holds for every subset $S$
of $\lfloor \frac{n}{2} \rfloor$ vertices of $G$. Together with the
lower bound it determines the asymptotic behavior of $D(k,n)$ and
shows that there are constants $c_1,c_2>1$ such that
$c_1^{-k}n^2<D(k,n)<c_2^{-k}n^2$ holds for all positive integers $k$
and $n$. This completely answers the questions of Chung and Graham.

Moreover, we can obtain a more precise result about the relation
between the number of induced copies of a fixed graph $H$ in a large graph $G$
and the edge distribution of $G$.
In their celebrated paper, Chung, Graham, and Wilson
\cite{ChGrWi} introduced a large collection of equivalent graph
properties shared by almost all graphs which are called {\it
quasirandom}. For a graph $G=(V,E)$ on $n$ vertices, two of these
properties are

\vspace{0.1cm}
${\bf P_1}:\textrm{For each subset}~S \subset V,$
$$e(S)=\frac{1}{4}|S|^2+o(n^2).$$

${\bf P}_2$: For every fixed graph $H$ with $k$ vertices, the number
of labeled induced copies of $H$ in $G$ is
 $$(1+o(1))n^{k}2^{-{k \choose 2}}.$$
So one can ask naturally, by how much does a graph deviate from ${\bf
P}_1$ assuming a deviation from ${\bf P}_2$? The following theorem
answers this question.

\begin{theorem}\label{dev} Let $H$ be a graph with $k$ vertices and $G=(V,E)$ be a
graph with $n$ vertices and at most $(1-\epsilon)2^{-{k \choose
2}}n^k$ labeled induced copies of $H$. Then there is a subset $S
\subset V$ with $|S|=\lfloor n/2 \rfloor$ and $|e(S)-\frac{n^2}{16}|
\geq \epsilon c^{-k} n^2$, where $c$ is an absolute constant.
\end{theorem}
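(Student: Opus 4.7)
The plan is to combine a stability strengthening of Theorem~\ref{main} with the edge-deviation transfer lemma of Erd\H{o}s, Goldberg, Pach, and Spencer cited above, following the same outline used to deduce the $c^{-k}n^2$ lower bound on $D(k,n)$ in the paragraph just before the theorem statement.

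First I would establish the following ``stability'' form of Theorem~\ref{main}: there exist absolute constants $c_1,c_2>0$ such that every $n$-vertex graph $G$ containing at most $(1-\epsilon)\,2^{-\binom{k}{2}}\,n^k$ labeled induced copies of $H$ admits an induced subgraph $R$ with $|R|\geq c_1^{-k}n$ and $\bigl|d(R)-\tfrac12\bigr|\geq c_2\,\epsilon$. The limiting case $\epsilon\to 1$ is (a weak form of) Theorem~\ref{main}. I would prove this by rerunning the density-increment / dependent-random-choice argument underlying Theorem~\ref{main}, but in contrapositive form: assuming every subset $R$ of size at least $c_1^{-k}n$ satisfies $|d(R)-\tfrac12|<c_2\,\epsilon$, a random-sampling count — pick a uniformly random ordered $k$-tuple of distinct vertices and estimate the probability that its induced labeled subgraph equals $H$ — would show $G$ contains more than $(1-\epsilon)2^{-\binom{k}{2}}n^k$ labeled induced copies of $H$, contradicting the hypothesis.

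Given such an $R$, the density deviation converts to an edge deviation
\[
\left|e(R)-\tfrac{|R|^2}{4}\right| \;\geq\; c_2\,\epsilon\cdot\tfrac12|R|^2 \;\geq\; \tfrac{c_2}{2}\,\epsilon\, c_1^{-2k}\,n^2.
\]
The Erd\H{o}s--Goldberg--Pach--Spencer lemma then lifts this to a subset $S\subset V$ with $|S|=\lfloor n/2\rfloor$ whose deviation from edge density $\tfrac12$ on $S$ is at least an absolute-constant multiple of the right-hand side. Absorbing these absolute constants into a single $c$ (one can take $c$ proportional to $c_1^{2}$) yields $|e(S)-n^2/16|\geq \epsilon\, c^{-k}\,n^2$.

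The main obstacle is the first step: securing linear dependence on $\epsilon$ in the density-deviation guarantee. The $H$-free proof of Theorem~\ref{main} produces a subset of density close to $0$ or $1$ from the hypothesis of \emph{zero} induced copies of $H$, and naively replacing ``zero'' by $(1-\epsilon)2^{-\binom{k}{2}}n^k$ tends to give only a sub-linear (logarithmic or $\epsilon^{1/\mathrm{poly}(k)}$) density deficit. Extracting a clean linear $\epsilon$ requires a more careful counting step in which the $\epsilon$-deficit in the induced $H$-count is attributed to at least one constant-order fluctuation of a pair-density inside $G$, with any incurred $\mathrm{poly}(k)$ losses absorbed into the exponential constant $c_1^{-k}$. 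Once this is in place, the remaining two steps are essentially the same as in the deduction of the $c^{-k}n^2$ bound on $D(k,n)$ given in the paragraph above.
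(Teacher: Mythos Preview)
Your high-level outline matches the paper exactly: find a linear-size subset $R$ whose edge density deviates from $\tfrac12$ by $\Omega(\epsilon)$, then apply the Erd\H{o}s--Goldberg--Pach--Spencer lemma. The final two steps are fine. The gap is in the first step, which you yourself flag as ``the main obstacle,'' and your proposed mechanism for it would not work as stated.

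You suggest rerunning the argument behind Theorem~\ref{main} in contrapositive, or doing a random-sampling count. But the proof of Theorem~\ref{main} is an \emph{iteration} (Lemma~\ref{useful} applied $\Theta(\log\tfrac{1}{\epsilon})$ times), and each iteration loses a factor exponential in $k$; tracking a deficit of $(1-\epsilon)$ through this would not yield a density deviation linear in $\epsilon$. The paper avoids iteration entirely here. It applies Lemma~\ref{lemmaerdoshajnal4} \emph{once} with parameters tuned to sit just below $\tfrac12$: namely $\epsilon_i=\tfrac12(1-2^{i-k-2}\epsilon)$ and $\delta_i=2^{i-k-2}\epsilon$. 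With these choices one checks $\prod_{i=1}^{k-1}(1-\delta_i)\epsilon_i^{k-i}>(1-\tfrac{\epsilon}{2})2^{-\binom{k}{2}}$, so the hypothesis of Lemma~\ref{lemmaerdoshajnal4} is met, and its conclusion gives disjoint sets $A,B$ of size $\geq k^{-2}2^{-k-2}n$ with $\bigl|d(A,B)-\tfrac12\bigr|\geq \tfrac12-\epsilon_i=2^{i-k-3}\epsilon$. This is where the linear $\epsilon$ comes from: it is the gap between the chosen $\epsilon_i$ and $\tfrac12$, not the output of any density-increment iteration. A triangle-inequality step then converts the bipartite deviation into a single set $R\in\{A,B,A\cup B\}$ with $\bigl|e(R)-\tfrac12\binom{|R|}{2}\bigr|\geq \Omega(\epsilon k^{-3}2^{-2k}n^2)$, after which your steps 2--3 finish the proof.

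So the missing idea is: do not iterate; instead invoke the generalized Erd\H{o}s--Hajnal lemma a single time with thresholds $\epsilon_i$ placed at distance $\Theta(2^{-k}\epsilon)$ from $\tfrac12$, calibrated so that the product $\prod(1-\delta_i)\epsilon_i^{k-i}$ just exceeds $(1-\epsilon)2^{-\binom{k}{2}}$.
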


The proof of Theorem \ref{dev} can be easily adjusted if we
replace the ``at most'' with ``at least'' and the $(1-\epsilon)$
factor by $(1+\epsilon)$.  Note that this theorem answers the
original question of Chung and Graham in a very strong sense.

\subsection{Induced Ramsey numbers}\label{inducedsubsection}

Recall that the induced Ramsey number $r_{\textrm{ind}}(H)$ is the
minimum $n$ for which there is a graph $G$ with $n$ vertices such
that for every $2$-edge-coloring of $G$, one can find an induced
copy of $H$ in $G$ whose edges are monochromatic. One of the
fundamental results in graph Ramsey theory (see chapter 9.3 of
\cite{Di}), the Induced Ramsey Theorem, says that
$r_{\textrm{ind}}(H)$ exists for every graph $H$. R\"odl \cite{Ro}
noted that a relatively simple proof of the theorem follows from a
simple application of his result discussed in the previous section.
However, all of the early proofs of the Induced Ramsey Theorem give
poor upper bounds on $r_{\textrm{ind}}(H)$.

Since these early proofs, there has been a considerable amount of
research on induced Ramsey numbers. Erd\H{o}s \cite{Er2} conjectured
that there is a constant $c$ such that every graph $H$ on $k$
vertices satisfies $r_{\textrm{ind}}(H) \leq 2^{ck}$. Erd\H{o}s and
Hajnal \cite{Er1} proved that $r_{\textrm{ind}}(H) \leq
2^{2^{k^{1+o(1)}}}$ holds for every graph $H$ on $k$ vertices.
Kohayakawa, Pr\"omel, and R\"odl \cite{KoPrRo} improved this bound
substantially and showed that if a graph $H$ has $k$ vertices and
chromatic number $\chi$, then $r_{\textrm{ind}}(H) \leq k^{ck(\log
\chi)},$ where $c$ is a universal constant. In particular, their result
implies an upper bound of $2^{ck (\log k)^2}$ on the induced Ramsey
number of any graph on $k$ vertices. In their proof, the graph $G$
which gives this bound is randomly constructed using projective
planes.

There are several known results that provide upper bounds on induced
Ramsey numbers for sparse graphs. For example, Beck \cite{Be}
studied the case when $H$ is a tree; Haxell, Kohayakawa, and \L
uczak \cite{HaKoLu} proved that the cycle of length $k$ has induced
Ramsey number linear in $k$; and, settling a conjecture of Trotter,
\L uczak and R\"odl \cite{LuRo} showed that the induced Ramsey
number of a graph with bounded degree is at most polynomial in the
number of its vertices. More precisely, they proved  that for every
integer $d$, there is a constant $c_d$ such that every graph $H$ on
$k$ vertices and maximum degree at most $d$ satisfies
$r_{\textrm{ind}}(H) \leq k^{c_d}$. Their proof, which also uses
random graphs, gives an upper bound on $c_d$ that is a tower of
$2$'s of height proportional to $d^2$.

As noted by Schaefer and Shah \cite{SchSh}, all known proofs of the
Induced Ramsey Theorem either rely on taking $G$ to be an
appropriately chosen random graph or give a poor upper bound on
$r_{\textrm{ind}}(H)$. However, often in combinatorics, explicit
constructions are desirable in addition to existence proofs given by
the probabilistic method. For example, one of the most famous such
problems was posed by Erd\H{o}s \cite{AlSp}, who asked for the
explicit construction of a graph on $n$ vertices without a complete
or empty subgraph of order $ c\log n$. Over the years, this
intriguing problem and its bipartite variant has drawn a lot of attention
by various researches (see, e.g.,
\cite{FrWi,Al2,BaKiShSuWi,Bo,BaRaShWi}), but, despite these efforts,
it is still open. Similarly, one would like to have an explicit
construction for the Induced Ramsey Theorem. We obtain such a
construction using pseudo-random graphs.

The {\it random graph}
$G(n,p)$ is the probability space of all labeled graphs on $n$ vertices, where
every edge appears randomly and independently with probability $p$. An
important property of $G(n,p)$ is that, with high
probability, between any two large subsets of vertices $A$ and $B$, the edge
density $d(A,B)=\frac{e(A,B)}{|A||B|}$ is approximately $p$. This observation is one of the motivations for the following useful
definition. A graph $G=(V,E)$ is {\it $(p,\lambda)$-pseudo-random}
if the following inequality holds for all subsets $A,B \subset V$:
$$|d(A,B)-p| \leq \frac{\lambda}{\sqrt{|A||B|}}.$$
It is easy to show that if $p<0.99$, then with high probability, the
random graph $G(n,p)$ is $(p,\lambda)$-pseudo-random with
$\lambda=O(\sqrt{pn})$. Moreover, there are also many explicit
constructions of pseudo-random graphs which can be obtained using
the following fact. Let $\lambda_1 \geq \lambda_2 \geq \ldots \geq
\lambda_n$ be the eigenvalues of the adjacency matrix of a graph
$G$. An {\it $(n,d,\lambda)$-graph} is a $d$-regular graph on $n$
vertices with $\lambda = \max_{i \geq 2} |\lambda_i|$. It was proved
by Alon (see, e.g., \cite{AlSp}, \cite{KrSu}) that every
$(n,d,\lambda)$-graph is in fact
$(\frac{d}{n},\lambda)$-pseudo-random. Therefore to construct good
pseudo-random graphs we need regular graphs with $\lambda \ll d$.
For more details on pseudo-random graphs, including many
constructions, we refer the interested reader to the recent survey
\cite{KrSu}.

A graph is {\it $d$-degenerate} if every subgraph of it has a vertex
of degree at most $d$. The degeneracy number of a graph $H$ is the
smallest $d$ such that $H$ is $d$-degenerate. This quantity, which
is always bounded by the maximum degree of the graph, is a natural
measure of its sparseness. In particular, in a $d$-degenerate graph
every subset $X$ spans at most $d|X|$ edges. The chromatic number
$\chi(H)$ of graph $H$ is the minimum number of colors needed to
color vertices of $H$ such that adjacent vertices get different
colors. Using a greedy coloring, it is easy to show that
$d$-degenerate graphs have chromatic number at most $d+1$. The
following theorem, which is special case of a more general result
which we prove in Section 4, shows that any sufficiently
pseudo-random graph of appropriate density has strong induced Ramsey
properties.

\begin{theorem}\label{quasicor1}
There is an absolute constant $c$ such that for all integers $k,d,\chi \geq
2$, every $(\frac{1}{k},n^{0.9})$-pseudo-random graph $G$ on $n \geq
k^{cd\log \chi}$ vertices satisfies that every $d$-degenerate graph on
$k$ vertices with chromatic number at most $\chi$ occurs as an
induced monochromatic copy in all $2$-edge-colorings of $G$.
Moreover, all of these induced monochromatic copies can be found in
the same color.
\end{theorem}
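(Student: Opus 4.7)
Fix any $2$-edge-colouring of $G$. By $(1/k,n^{0.9})$-pseudo-randomness applied to $(V,V)$, the edge density of $G$ is $\tfrac{1}{k}\pm o(1)$, so whichever colour (say red) carries at least half the edges of $G$ has density at least $\tfrac{1}{2k}-o(1)$. It suffices to embed $H$ into $G$ so that every edge of $H$ maps to a red edge and every non-edge of $H$ maps to a non-edge of $G$; always working with the majority colour of the given colouring then yields the ``same colour for all copies'' clause.

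To carry out the embedding I would combine the chromatic structure of $H$ with its degeneracy. Fix a proper $\chi$-colouring $V(H)=I_1\sqcup\cdots\sqcup I_\chi$ and an ordering $v_1,\ldots,v_k$ of $V(H)$ in which each $v_i$ has at most $d$ earlier neighbours (using $d$-degeneracy). Partition $V(G)$ into disjoint target sets $V_1,\ldots,V_\chi$ of size $\Theta(n/\chi)$, where $I_j$ is to be embedded into $V_j$, and iteratively refine these to subsets $V_j'\subseteq V_j$ of size $n^{1-o(1)}$ such that between every pair $(V_s',V_t')$ whose indices carry an edge of $H$, the red edge density is at least $\Omega(1/k)$. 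The pseudo-randomness parameter $\lambda=n^{0.9}$ is far more than enough to sustain these $O(\chi^{2})$ refinement steps.

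Now embed $v_1,\ldots,v_k$ greedily using candidate sets: maintain $C(v_\ell)\subseteq V_{j(v_\ell)}'$, initially $V_{j(v_\ell)}'$. When it is time to embed $v_i$, choose $\phi(v_i)\in C(v_i)$ so that for every later neighbour $v_\ell$ of $v_i$ in $H$, the red neighbourhood of $\phi(v_i)$ meets $C(v_\ell)$ in at least $\Omega(|C(v_\ell)|/k)$ vertices, and for every later non-neighbour $v_\ell$, the $G$-neighbourhood of $\phi(v_i)$ meets $C(v_\ell)$ in at most $O(|C(v_\ell)|/k)$ vertices. A positive proportion of $\phi(v_i)\in C(v_i)$ satisfies both conditions simultaneously: the first by the red-density lower bound from the refinement together with a first-moment estimate, the second directly by pseudo-randomness of $G$. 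Then update each $C(v_\ell)$ by intersecting with the red neighbourhood (respectively, the non-neighbourhood in $G$) of $\phi(v_i)$.

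Since $v_\ell$ has at most $d$ earlier neighbours in $H$, its candidate set shrinks by a red-intersection at most $d$ times (each time by a factor $\Theta(1/k)$) and by a non-neighbour restriction up to $k$ times (each time by a factor $1-O(1/k)$, for an overall constant loss). Consequently $|C(v_\ell)|\geq n^{1-o(1)}\cdot k^{-O(d)}$ throughout, and the embedding succeeds provided $n\geq k^{cd\log\chi}$; the $\log\chi$ factor reflects the cost of the refinement phase across the $\chi$ colour classes. The main technical obstacle, and where pseudo-randomness does its heaviest lifting, is precisely this refinement: simultaneously locating large subsets on which the majority colour has density $\Omega(1/k)$ between every required pair of colour classes.
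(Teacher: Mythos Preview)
Your greedy embedding has a genuine gap: you never establish that the red density between the \emph{updated} candidate sets $C(v_j),C(v_\ell)$ remains $\Omega(1/k)$ after each restriction. The first-moment argument you invoke only uses the density between the current $C(v_i)$ and each $C(v_\ell)$; it says nothing about the density between $C(v_\ell)$ and $C(v_m)$ after both have been intersected with red neighbourhoods. Pseudo-randomness of $G$ cannot rescue this, since it controls the edges of $G$, not the red subgraph. Concretely, let $H=K_3$ (so $k=3$, $d=2$, $\chi=3$), take a balanced bipartition $V(G)=L\cup R$, and colour exactly the $L$--$R$ edges of $G$ red and the rest blue; red then carries half the edges, so your rule may select it. Any large $V_1',V_2',V_3'$ split evenly across $L$ and $R$ have pairwise red density about $1/(2k)$, so the refinement is satisfied. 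But once you fix $\phi(v_1)\in L$, its red neighbourhood lies entirely in $R$, hence the updated $C(v_2),C(v_3)\subseteq R$ have \emph{zero} red density between them and the embedding aborts. (For a strict-majority version, colour the crossing edges of a balanced tripartition red and take $H=K_4$.) The refinement step is likewise only asserted: $\lambda=n^{0.9}$ constrains $G$, not its colour classes.

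The paper's argument avoids this by reversing the roles of the colours and tracking an upper bound rather than a lower bound. It proceeds by contradiction: suppose some $H_1$ has no induced red copy. Treating non-edges of $G$ as a third colour, an Erd\H{o}s--Hajnal-type lemma (Lemma~\ref{lemmaerdoshajnal2}) shows that every large vertex set contains two large subsets with red density at most $\epsilon$ between them; iterating this via the key lemma of Section~\ref{section12} (and this iteration is where the $\log\chi$ actually enters) yields $\chi$ large sets with pairwise red density at most $\beta=p/(1000k^2)$. One then embeds $H_2$ in \emph{blue} (Lemma~\ref{densitylemma}), maintaining the invariant that red density between surviving candidate sets is at most $(1+1/k)^i\beta$; the Claim inside that lemma shows this \emph{upper} bound is preserved under restriction to $G$-neighbourhoods and non-neighbourhoods, because pseudo-randomness of $G$ controls the number of triangles with one prescribed red edge. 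An upper bound on the bad colour's density is stable under these operations; a lower bound on the good colour's density is not, which is exactly where your scheme breaks.
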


This theorem implies that, with high probability, $G(n,p)$ with
$p=1/k$ and $n \geq k^{cd\log \chi}$  satisfies that every
$d$-degenerate graph on $k$ vertices with chromatic number at most
$\chi$ occurs as an induced monochromatic copy in all $2$-edge-colorings
of $G$. It gives the first polynomial upper bound on the induced
Ramsey numbers of $d$-degenerate graphs. In particular, for bounded
degree graphs this is a significant improvement of the above
mentioned \L uczak-R\"odl result. It shows that the exponent of the
polynomial in their theorem can be taken to be $O(d\log d)$, instead
of the previous bound of a tower of $2$'s of height proportional to
$d^2$.

\begin{corollary}\label{corollaryobvious}
There is an absolute constant $c$ such that every $d$-degenerate
graph $H$ on $k$ vertices with chromatic number $\chi \geq 2$ has
induced Ramsey number $r_{\textrm{ind}}(H) \leq k^{cd \log \chi}$.
\end{corollary}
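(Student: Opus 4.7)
The plan is to deduce this corollary directly from Theorem \ref{quasicor1} by producing a single graph on $n = \lceil k^{cd \log \chi} \rceil$ vertices that meets the pseudo-randomness hypothesis. The induced Ramsey number is an existential quantity, so I just need one such host graph $G$; Theorem \ref{quasicor1} will then force an induced monochromatic copy of any $d$-degenerate $H$ with $\chi(H) \leq \chi$ in every $2$-edge-coloring of $G$, giving $r_{\textrm{ind}}(H) \leq n$.

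First, I would set $p = 1/k$ and consider the random graph $G(n,p)$. By the fact quoted in the paper, with high probability $G(n,p)$ is $(p,\lambda)$-pseudo-random with $\lambda = O(\sqrt{pn}) = O(\sqrt{n/k})$. Since $\sqrt{n/k} \leq n^{1/2} \leq n^{0.9}$ for every $n \geq 1$, the random graph is whp $(1/k, n^{0.9})$-pseudo-random (using $p = 1/k \leq 1/2 < 0.99$, the regime where the cited fact applies). In particular there exists at least one such graph on $n$ vertices, which is all I need.

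Next, I apply Theorem \ref{quasicor1} with this $G$, with the same absolute constant $c$. Since $n \geq k^{cd \log \chi}$, the theorem guarantees that for every $2$-edge-coloring of $G$ and every $d$-degenerate graph $H$ on $k$ vertices with $\chi(H) \leq \chi$, an induced monochromatic copy of $H$ appears in $G$ (moreover all such copies can be found in a single color, though that is not needed here). Hence $r_{\textrm{ind}}(H) \leq n \leq k^{cd \log \chi}$, after at most adjusting the constant $c$ by a constant factor to absorb the ceiling.

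There is essentially no obstacle beyond checking parameters: the only sanity check is that the pseudo-random bound $\lambda = O(\sqrt{n/k})$ comfortably satisfies $\lambda \leq n^{0.9}$ in our density range, and that small cases (say $k=2$, where $H$ is an edge) are handled trivially since $k^{cd \log \chi} \geq 2$. The work is all in Theorem \ref{quasicor1}; the corollary is simply its specialization to a random host graph, with the explicit version provided by the pseudo-random constructions (e.g.\ appropriate $(n,d,\lambda)$-graphs) also discussed in the paper, which is what makes the upper bound explicit.
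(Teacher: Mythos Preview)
Your proposal is correct and matches the paper's own derivation: the paper states the corollary immediately after Theorem~\ref{quasicor1} and justifies it (in the preceding paragraph) by noting that $G(n,1/k)$ with $n \geq k^{cd\log\chi}$ is whp $(1/k,n^{0.9})$-pseudo-random, which is exactly what you do. The only remark is that your final sentence slightly conflates two things---the random graph already suffices for the bound on $r_{\textrm{ind}}(H)$, while the explicit $(n,d,\lambda)$-constructions are a separate bonus, not what makes the corollary go through.
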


A significant additional benefit of Theorem \ref{quasicor1} is that
it leads to explicit constructions for induced Ramsey numbers. One
such example can be obtained from a construction of Delsarte and
Goethals and also of Turyn (see \cite{KrSu}). Let $r$ be a prime
power and let $G$ be a graph whose vertices are the elements of the
two dimensional vector space over finite field $\mathbb{F}_r$, so
$G$ has $r^2$ vertices. Partition the $r+1$ lines through the origin
of the space into two sets $P$ and $N$, where $|P|=t$. Two vertices
$x$ and $y$ of the graph $G$ are adjacent if $x-y$ is parallel to a
line in $P$. This graph is known to be $t(r-1)$-regular with
eigenvalues, besides the largest one, being either $-t$ or $r-t$.
Taking $t= \frac{r^2}{k(r-1)}$, we obtain an $(n,d,\lambda)$-graph
with $n=r^2$, $d = n/k$, and $\lambda=r-t\leq r \leq n^{1/2}$. This
gives a $(p,\lambda)$-pseudo-random graph with $p =d/n=1/k$ and
$\lambda \leq n^{1/2}$ which satisfies the assertion of Theorem
\ref{quasicor1}.

Another well-known explicit construction is the Paley graph
$P_n$. Let $n$ be a prime power which is congruent to 1 modulo 4 so that $-1$ is a square in
the finite field $\mathbb{F}_n$. The {\it Paley graph} $P_n$ has vertex
set $\mathbb{F}_n$ and distinct elements $x,y \in \mathbb{F}_n$ are
adjacent if $x-y$ is a square. It is well known and not difficult to prove that
the Paley graph $P_n$ is $(1/2,\lambda)$-pseudo-random with $\lambda=\sqrt{n}$.
This can be used together with the generalization of Theorem \ref{quasicor1}, which we discuss in
Section 5, to prove the following result.

\begin{corollary}\label{payley}
There is an absolute constant $c$ such that for prime $n \geq
2^{ck\log^2 k}$, every graph on $k$ vertices occurs as an
induced monochromatic copy in all $2$-edge-colorings of the Paley graph $P_n$.
\end{corollary}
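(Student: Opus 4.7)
The plan is to invoke the generalization of Theorem~\ref{quasicor1} stated and proved in Section~5, which permits the edge density $p$ of the pseudo-random host to be any value bounded away from $0$ and $1$, not just $p=1/k$. Since $P_n$ is $(1/2,\sqrt{n})$-pseudo-random --- a much stronger pseudo-randomness bound than the $n^{0.9}$ appearing in Theorem~\ref{quasicor1} --- it fits the hypothesis comfortably. Any graph $H$ on $k$ vertices is trivially $(k-1)$-degenerate and has chromatic number at most $k$; feeding $d=k-1$ and $\chi\le k$ into the resulting threshold of the form $n\ge k^{c'd\log\chi}$ gives $n\ge 2^{ck\log^2 k}$ for a suitable absolute constant $c$, matching the statement.

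Two features of $P_n$ drive the argument. The density $p=1/2$ is the natural choice for embedding an arbitrary $k$-vertex $H$: in any $2$-edge-coloring, each pair of vertices of $P_n$ is red, blue, or a non-edge with comparable frequencies, so both the edges and the non-edges of $H$ can be placed without exhausting the host. The pseudo-randomness simultaneously controls common neighborhoods and common non-neighborhoods, which is precisely what a greedy vertex-by-vertex embedding demands: fixing a proper $\chi$-coloring of $H$ and ordering its vertices so that each new vertex has at most $d$ predecessors in $H$, one maintains at each step a large set of valid candidates --- namely the intersection of common monochromatic neighborhoods and common non-neighborhoods of previously embedded vertices --- and pseudo-randomness keeps this candidate set close to its expected size of order $2^{-\Theta(k)}n$, up to an additive error $O(\lambda \cdot p^{-\Theta(d)})$.

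The one step that will need genuine care --- and which is handled inside the generalization in Section~5 --- is showing that the accumulated pseudo-randomness error over all $k$ greedy steps stays dominated by the main term, so that at each step many valid candidates remain. Once that is in place, plugging in $p=1/2$, $d\le k-1$, $\chi\le k$, and the Paley value $\lambda=\sqrt{n}$ is routine and yields the stated threshold $n\ge 2^{ck\log^2 k}$; together with the self-complementary character of the Paley graph, this delivers a monochromatic induced copy of every $k$-vertex $H$ in every $2$-edge-coloring of $P_n$, completing the proof.
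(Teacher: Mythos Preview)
Your argument is correct and follows the paper's route exactly: invoke Theorem~\ref{quasirandominduced} with $p=1/2$, $\lambda=\sqrt{n}$, and $d,\chi\le k$, then check that its hypothesis $\lambda\le\big((\tfrac{p}{10k})^d2^{-pk}\big)^{20\log\chi}n$ rearranges to $n\ge 2^{ck\log^2 k}$. Your second and third paragraphs are unnecessary and slightly misleading: once Theorem~\ref{quasirandominduced} is cited, no further embedding work is needed for the corollary, the self-complementarity of $P_n$ plays no role, and the actual mechanism inside Section~5 is not a single greedy pass but a two-stage argument (first locate $\chi$ parts with low red density between them via Corollary~\ref{last2}, then embed greedily via Lemma~\ref{densitylemma}).
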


This explicit construction matches the best known upper bound on
induced Ramsey numbers of graphs on $k$ vertices obtained by
Kohayakawa, Pr\"omel, and R\"odl \cite{KoPrRo}. Similarly, we can
prove that there is a constant $c$ such that, with high probability,
$G(n,1/2)$ with $n \geq 2^{ck\log^2 k}$ satisfies that every graph
on $k$ vertices occurs as an induced monochromatic copy in all
$2$-edge-colorings of $G$.

Very little is known about lower bounds for induced Ramsey numbers
beyond the fact that an induced Ramsey number is at least its
corresponding Ramsey number. A well-known conjecture of Burr and
Erd\H{o}s \cite{BuEr} from 1973 states that for each positive
integer $d$ there is a constant $c(d)$ such that the Ramsey number
$r(H)$ is at most $c(d)k$ for every $d$-degenerate graph $H$ on $k$
vertices. As mentioned earlier, Haxell et al. \cite{HaKoLu} proved
that the induced Ramsey number for the cycle on $k$
vertices is linear in $k$. This implies that the induced
Ramsey number for the path on $k$ vertices is also linear in $k$.
Also, using a star with $2k-1$ edges, it is trivial to see that the induced Ramsey number of a star
with $k$ edges is $2k$. It is natural to ask whether the
Burr-Erd\H{o}s conjecture extends to induced Ramsey numbers. The
following result shows that this fails already for trees, which are
$1$-degenerate graphs.

\begin{theorem} \label{tree} For every $c>0$ and sufficiently large integer $k$
there is  a tree $T$ on $k$ vertices such that
$r_{\textrm{ind}}(T) \geq ck$.
\end{theorem}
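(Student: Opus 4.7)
\begin{proofof}{Theorem \ref{tree} (plan)}
The plan is to construct, for each $c>0$ and large $k$, an explicit tree $T$ on $k$ vertices such that every graph $G$ on fewer than $ck$ vertices admits a 2-edge-coloring with no monochromatic induced copy of $T$.

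The starting point is a simple necessary condition obtained by double-counting. Suppose $G$ has $n$ vertices and $G\to T$ in the induced Ramsey sense. Summing the number of monochromatic induced copies of $T$ over all $2^{e(G)}$ 2-colorings of $E(G)$ gives at least $2^{e(G)}$ (one per coloring), while each fixed induced $T$-copy is monochromatic in exactly $2\cdot 2^{e(G)-(k-1)}$ colorings. Dividing yields
$$N_T(G)\ \ge\ 2^{k-2}.$$
Hence it suffices to construct a tree $T$ so that $N_T(G)<2^{k-2}$ for every graph $G$ on at most $ck-1$ vertices; then the expected number of monochromatic induced $T$-copies under a uniformly random 2-coloring of $E(G)$ is strictly less than $1$, so some coloring has none and $G\not\to T$.

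For the construction I would take $T$ to be a tree with a rigid, asymmetric structure---a natural candidate being the spider $T_{d,\ell}$ with a central vertex joined to $d$ vertex-disjoint paths of length $\ell$, where $d=d(c)$ is chosen large and $d\ell+1=k$. The many non-edges of $T$ (between distinct legs, within each leg, and between the center and interior leg vertices) impose $\binom{k}{2}-(k-1)$ independence conditions on any induced copy in $G$. The idea is to root at the high-degree center and count induced copies by exploring $T$ leg-by-leg: (choice of center in $G$) $\times$ (local choices obeying non-edge constraints). One hopes to bound the count by something of the shape $n^{O(d)}\cdot 2^{-\Omega(k^2/d)}$, which drops below $2^{k-2}$ once $d$ is tuned to $c$.

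The main obstacle is making the counting bound tight enough uniformly over all $G$. For unlucky trees (e.g.\ paths) an adversary can produce a graph on $O(k)$ vertices with exponentially many induced copies, so the shape of $T$ must be delicate. If the direct counting is too lossy, the fallback is to replace the uniform random coloring with one tailored to $G$: use the high degree $d$ of the center of $T$ to restrict its possible images to a set $U\subseteq V(G)$ of vertices of degree $\ge d$ with an independent $d$-set in their neighborhood, then 2-color $E(G)$ (for instance by randomly partitioning $U$ and coloring edges according to endpoint classes) so that no $x\in U$ has $d$ monochromatic neighbors admitting the correct induced leg structure. Achieving simultaneous blocking of \emph{both} colors, rather than just one, is the crux of the argument.
\end{proofof}
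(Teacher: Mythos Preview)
Your first-moment framework is sound---if $N_T(G)<2^{k-2}$ for every $G$ on fewer than $ck$ vertices, then $r_{\textrm{ind}}(T)\ge ck$---but the hoped-for bound $N_T(G)\le n^{O(d)}\cdot 2^{-\Omega(k^2/d)}$ cannot hold for any tree. Taken literally it would force $N_T(G)<1$ for large $k$, yet $G=T$ already gives $N_T(G)\ge 1$; the factor $2^{-\Omega(k^2/d)}$ is what one sees for the \emph{probability} that a random $k$-set in $G(n,1/2)$ induces $T$, not for a count in a worst-case $G$. More concretely, the spider fails the first-moment test outright. Let $H_s$ be the graph on $\{0,\dots,\ell-1\}\times[s]$ with $(i,a)\sim(j,b)$ iff $|i-j|=1$; every sequence $(0,1),(1,a_1),\dots,(\ell-1,a_{\ell-1})$ is an induced $P_\ell$, giving $s^{\ell-1}$ induced paths from a fixed start. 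Now take $G$ to be a vertex $v$ joined to the $(0,1)$-vertex of each of $d$ disjoint copies of $H_s$. Then $|V(G)|=1+s(k-1)$ and the number of induced copies of $T_{d,\ell}$ centred at $v$ is at least $s^{d(\ell-1)}=s^{\,k-1-d}$. With $s=3$ this exceeds $2^{k-2}$ for all large $k$ regardless of how you tune $d=d(c)$, so a uniformly random $2$-colouring of a graph on $3k$ vertices can have many monochromatic induced spiders. The fallback you sketch (restricting possible centres, then colouring adaptively) may be salvageable, but as written it does not explain how to block \emph{both} colours simultaneously, which is exactly the hard point.

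The paper takes a completely different route. The tree is not a bounded-degree spider but a path of length $k/2$ glued to a star $K_{1,k/2}$; the large-degree vertex is essential. One then proves the stronger off-diagonal statement $r_{\textrm{weak ind}}(P_{k/2},K_{1,k/2})\ge k/(2\alpha)$ for any fixed $\alpha>0$. The colouring of an arbitrary $G$ on $n<k/(2\alpha)$ vertices is produced from a Szemer\'edi regularity partition: edges inside parts, in irregular pairs, or in sparse regular pairs are (essentially) red, the rest blue. Then the red graph has maximum degree $<\alpha^2 n<k/2$, so no red $K_{1,k/2}$; and every $k/2$-set either spans $\Omega(n^2)$ blue edges (too many for a path) or has a blue-independent subset of size $(1-\alpha)k/2$ (too large for a path). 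Thus the obstruction is a structural dichotomy for the blue graph, not a counting bound, and the uniformly random colouring plays no role except in a trivial side case.
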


The tree $T$ in the above theorem can be taken to be any
sufficiently large tree that contains a matching of linear size and
a star of linear size as subgraphs. It is interesting that the
induced Ramsey number for a path on $k$ vertices or a star on $k$
vertices is linear in $k$, but the induced Ramsey number for a tree
which contains both a path on $k$ vertices and a star on $k$
vertices is superlinear in $k$.

\vspace{0.3cm}
\noindent
{\bf Organization of the paper.}\,\,
In the next section we give short proofs of Theorem
\ref{main} and Theorem \ref{combined} which illustrate our methods.
Section \ref{section12} contains the key lemma that is used as a
replacement for Szemer\'edi's regularity lemma in the proofs of
several results. We answer questions of Chung-Graham and Nikiforov
on the edge distribution in graphs with a forbidden induced
subgraph in Section \ref{sectionmulticolor}. In Section \ref{moreoninduced} we show that any sufficiently
pseudo-random graph of appropriate density has strong induced Ramsey properties.
Combined with known examples of pseudo-random graphs, this leads to explicit
constructions which match and improve the best known estimates for induced Ramsey
numbers. The proof of the result that there are trees whose
induced Ramsey number is superlinear in the number of vertices is in Section \ref{superlinear}. The last section of this
paper contains some concluding remarks together with a discussion of a few conjectures and open problems.
Throughout the paper, we systematically
omit floor and ceiling signs whenever they are not crucial for the
sake of clarity of presentation. We also do not make any serious attempt
to optimize absolute constants in our statements and proofs.

\section{Ramsey-type results for $H$-free graphs}\label{section2}

In this section, we prove Theorems \ref{main} and \ref{combined}.
While we obtain more general results later in the paper, the purpose
of this section is to illustrate on simple examples the main ideas and techniques
that we will use in our proofs. Our theorems
strengthen and generalize results from \cite{Ro} and \cite{PrRo} and the proofs we present
here are shorter and simpler than the original ones.
We start with the proof of Theorem \ref{main}, which uses the following lemma of Erd\H{o}s and
Hajnal \cite{ErHa}. We prove a generalization of this lemma
in Section \ref{sectionmulticolor}.

\begin{lemma}\label{lemmaerdoshajnal}
For each $\epsilon \in (0,1/2)$, graph $H$ on $k$ vertices, and
$H$-free graph $G=(V,E)$ on $n \geq 2$ vertices, there are disjoint
subsets $A$ and $B$ of $V$ with $|A|, |B| \geq
\epsilon^{k-1}\frac{n}{k}$ such that either every vertex in $A$ has
at most $\epsilon|B|$ neighbors in $B$, or every vertex in $A$ has
at least $(1-\epsilon)|B|$ neighbors in $B$.
\end{lemma}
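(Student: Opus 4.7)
The plan is to prove Lemma 2.1 by iteratively attempting to build an induced copy of $H$ in $G$, and extracting the pair $(A,B)$ from whatever obstruction stops the process, which must occur since $G$ is $H$-free.

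Fix an arbitrary ordering $v_1,\ldots,v_k$ of $V(H)$. After having selected vertices $u_1,\ldots,u_i \in V(G)$ that form an induced copy of $H[\{v_1,\ldots,v_i\}]$, I will track, for each remaining role $l \in \{i+1,\ldots,k\}$, the candidate set
\[
W_i^l := \bigl\{w \in V\setminus\{u_1,\ldots,u_i\} : wu_j \in E(G) \iff v_l v_j \in E(H) \text{ for all } j \le i\bigr\},
\]
which consists of precisely those vertices that could still play the role of $v_l$ in a completion of the embedding. The process begins with $W_0^l = V$ for every $l$.

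At step $i+1$, I attempt to pick $u_{i+1} \in W_i^{i+1}$ so that, for every $l > i+1$, the vertex $u_{i+1}$ has the correct type of interaction with $W_i^l$: at least $\epsilon|W_i^l|$ neighbors in $W_i^l$ if $v_l v_{i+1} \in E(H)$, and at least $\epsilon|W_i^l|$ non-neighbors otherwise. Such a choice ensures $|W_{i+1}^l| \ge \epsilon|W_i^l|$ for every $l > i+1$, so successfully completing all $k$ steps would yield an induced copy of $H$ in $G$, contradicting $H$-freeness. Therefore the process must fail at some step $i+1 \le k-1$. At the failing step, every $u \in W_i^{i+1}$ is bad with respect to some $l(u) > i+1$; pigeonholing over the at most $k-i-1$ possible values of $l(u)$ singles out an index $l^*$ for which $A := \{u \in W_i^{i+1} : u \text{ is bad for } l^*\}$ has size at least $|W_i^{i+1}|/(k-i-1)$. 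Setting $B := W_i^{l^*}$, every vertex of $A$ has either at most $\epsilon|B|$ or at least $(1-\epsilon)|B|$ neighbors in $B$ by construction, and the bound $|W_i^l| \ge \epsilon^i n$ (maintained through the $i$ previously successful steps) gives both $|A|,|B| \ge \epsilon^{k-1}n/k$ after absorbing the pigeonhole factor and using $\epsilon \le 1/2$.

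The main technical obstacle I expect is arranging that $A$ and $B$ are \emph{disjoint}. Whenever $v_{i+1}$ and $v_{l^*}$ differ in their adjacency pattern to $\{v_1,\ldots,v_i\}$, the sets $W_i^{i+1}$ and $W_i^{l^*}$ are automatically disjoint. The delicate case is when these two patterns coincide (which is always the situation at the initial step $i=0$, where every $W_0^l$ equals $V$); then $W_i^{i+1}=W_i^{l^*}$, and the failure condition instead asserts that $G[W_i^{i+1}]$ itself has extremal edge density. I would handle this by partitioning $W_i^{i+1}$ into two disjoint halves, taking $A$ to be the bad vertices intersected with one half and $B$ to be the other half, and verifying that the biased density condition survives with only a small constant factor loss that can be absorbed into the $1/k$ coefficient of the claimed bound. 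A careful choice of the ordering $v_1,\ldots,v_k$ (and a slightly sharper pigeonhole weighted by the sizes of the distinct patterns) should let one land exactly on the stated bound $\epsilon^{k-1}n/k$.
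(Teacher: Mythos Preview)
Your core strategy---iteratively attempt to embed $H$ and harvest the pair $(A,B)$ from the first failure via pigeonhole---is exactly the approach the paper takes (carried out in the proof of the more general Lemma~\ref{lemmaerdoshajnal4}). The genuine gap is precisely where you flagged it: disjointness.

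Your halving fix does not work as claimed. If $u$ has at most $\epsilon|W_i^{l^*}|$ neighbors in $W_i^{l^*}$ and $B$ is one half of $W_i^{l^*}$, then $u$ may have as many as $2\epsilon|B|$ neighbors in $B$; the loss is in the \emph{density parameter}, not merely in the set sizes. Recovering the conclusion with parameter $\epsilon$ would force you to run the whole argument with $\epsilon/2$, turning $\epsilon^{k-1}$ into $(\epsilon/2)^{k-1}$---a multiplicative loss of $2^{k-1}$, which certainly cannot be ``absorbed into the $1/k$ coefficient.'' No reordering of $V(H)$ or weighted pigeonhole rescues this, since the problem already appears at step $0$, where every $W_0^l$ equals $V$. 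The paper's remedy is simply to fix an equitable partition $V=V_1\cup\cdots\cup V_k$ at the outset and insist that the $j$-th embedded vertex come from $V_j$; the candidate sets $V_{\ell,i}\subset V_\ell$ are then pairwise disjoint for free. This costs a factor of $k$ in the initial sizes (each $|V_\ell|=n/k$), which is exactly the $1/k$ in the stated bound, and after that your pigeonhole and size-tracking go through verbatim: at a failure step $i+1\le k-1$ one has $|B|\ge\epsilon^i n/k$ and $|A|\ge\epsilon^i n/\bigl(k(k-i-1)\bigr)$, both of which are at least $\epsilon^{k-1}n/k$ since $(1/\epsilon)^{k-1-i}>2^{k-1-i}\ge k-i-1$.
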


Actually, the statement of the lemma in \cite{ErHa} is a bit weaker
than that of Lemma \ref{lemmaerdoshajnal} but it is easy to get the above statement
by analyzing more carefully the proof of Erd\H{o}s and Hajnal.
Lemma \ref{lemmaerdoshajnal} roughly says that every $H$-free graph
contains two large disjoint vertex subsets such that the edge
density between them is either very small or very large. However, to prove
Theorem \ref{main}, we need to find a large induced subgraph
with such edge density. Our next lemma shows
how one can iterate the bipartite density result of Lemma
\ref{lemmaerdoshajnal} in order to establish the complete density
result of Theorem \ref{main}.

For $\epsilon_1,\epsilon_2 \in (0,1)$ and a graph $H$, define
$\delta(\epsilon_1,\epsilon_2,H)$ to be the largest $\delta$ (which
may be 0) such that for each $H$-free graph on $n$ vertices, there
is an induced subgraph on at least $\delta n$ vertices with edge
density at most $\epsilon_1$ or at least $1-\epsilon_2$. Notice that
for $2 \leq n_0 \leq n_1$, the edge-density of a graph on $n_1$
vertices is the average of the edge-densities of the induced
subgraphs on $n_0$ vertices. Therefore, from definition of $\delta$,
it follows that for every $2 \leq n_0 \leq
\delta(\epsilon_1,\epsilon_2,H) n$ and $H$-free graph $G$ on $n$
vertices, $G$ contains an induced subgraph on exactly $n_0$ vertices
with edge density at most $\epsilon_1$ or at least $1-\epsilon_2$.
Recall that the edge-density $d(A)$ of a subset $A$ of $G$ equals
$e(A)/{|A| \choose 2}$, where $e(A)$ is  the number of edges spanned
by $A$.

\begin{lemma}\label{useful}
Suppose $\epsilon_1,\epsilon_2 \in (0,1)$ with
$\epsilon_1+\epsilon_2<1$ and $H$ is a graph on $k \geq 2$ vertices.
Let $\epsilon=\min(\epsilon_1,\epsilon_2)$. We have
$$\delta(\epsilon_1,\epsilon_2,H) \geq
(\epsilon/4)^{k}k^{-1}\min\Big(\delta\big(3\epsilon_1/2,\epsilon_2,H\big),
\delta\big(\epsilon_1, 3\epsilon_2/2,H\big)\Big).$$
\end{lemma}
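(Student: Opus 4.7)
The plan is to apply Lemma~\ref{lemmaerdoshajnal} once to extract a bipartite pair with extreme density, invoke the recursive hypothesis on each side, and then combine the resulting subgraphs. Let $G$ be $H$-free on $n$ vertices and set $\alpha := \min(\delta(3\epsilon_1/2,\epsilon_2,H),\,\delta(\epsilon_1,3\epsilon_2/2,H))$, so the target is a subgraph on at least $(\epsilon/4)^k k^{-1} \alpha n$ vertices of density at most $\epsilon_1$ or at least $1-\epsilon_2$.

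We first apply Lemma~\ref{lemmaerdoshajnal} to $G$ with parameter $\epsilon/4$, obtaining disjoint $A,B \subseteq V(G)$ of size at least $(\epsilon/4)^{k-1}n/k$, such that either (i) $e(A,B)\leq (\epsilon/4)|A||B|$, or (ii) $e(A,B)\geq (1-\epsilon/4)|A||B|$. In case (i) we aim to produce a subgraph of density at most $\epsilon_1$ by invoking $\delta(3\epsilon_1/2,\epsilon_2,H)$; in case (ii) we aim for density at least $1-\epsilon_2$ by invoking $\delta(\epsilon_1,3\epsilon_2/2,H)$ in a parallel argument. The minimum in the statement arises because which recursive call is used depends on which case of Lemma~\ref{lemmaerdoshajnal} occurs. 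We describe case (i) in detail; case (ii) is handled symmetrically.

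Since each of $G[A]$ and $G[B]$ is $H$-free, we invoke $\delta(3\epsilon_1/2,\epsilon_2,H)$ — exploiting the flexibility (noted in the paragraph preceding the lemma) to request any subset size up to $\delta(3\epsilon_1/2,\epsilon_2,H)\cdot|A|$ — to select $A'\subseteq A$ and $B'\subseteq B$ of a common size $m$, each of edge density at most $3\epsilon_1/2$ or at least $1-\epsilon_2$. If either is in the dense alternative we return it; otherwise both are sparse of density at most $3\epsilon_1/2$, and we form $A'\cup B'$. Its density splits into an internal contribution (roughly $3\epsilon_1/4$, since when $|A'|=|B'|=m$ only about half of the pair count in $A'\cup B'$ is internal to $A'$ or $B'$) plus a cross contribution controlled by $e(A,B)\leq(\epsilon/4)|A||B|$.

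The main obstacle is this cross contribution. Naively it is of order $\epsilon/\alpha^2$, which need not lie within the remaining slack $\epsilon_1/4$ once $\alpha$ becomes small. We expect the actual argument to require a finer selection of $A'$ and $B'$ — most plausibly a preliminary truncation of $A$ and $B$ costing an additional factor of $\epsilon/4$ in size, which would account precisely for the gap between the $(\epsilon/4)^{k-1}/k$ delivered by Lemma~\ref{lemmaerdoshajnal} alone and the $(\epsilon/4)^k/k$ claimed in the statement. The aim of this refinement is to bound the cross density between the final $A'$ and $B'$ directly rather than merely bounding $e(A,B)$, after which the combined density is at most $3\epsilon_1/4 + \epsilon_1/4 = \epsilon_1$ as required. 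Carrying out this refinement cleanly is the technical crux we anticipate as the main obstacle.
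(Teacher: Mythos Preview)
Your overall scaffold is correct, and you have correctly isolated the one nontrivial step: controlling the cross density $d(A',B')$. However, you have weakened the conclusion of Lemma~\ref{lemmaerdoshajnal} in a way that throws away precisely the information needed to finish. The lemma does not merely give $e(A,B)\le(\epsilon/4)|A||B|$; it gives the per-vertex statement that \emph{every} vertex of $A$ has at most $(\epsilon/4)|B|$ neighbours in $B$. That pointwise degree bound is what drives the argument, and it survives passage to any $A'\subset A$: every vertex of $A'$ still has at most $(\epsilon/4)|B|$ neighbours in $B$, hence $e(A',B)\le(\epsilon/4)|A'||B|$ regardless of how small $A'$ is.

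With this in hand the ``refinement'' you anticipate is not a preliminary truncation of $A$ and $B$ but a \emph{sequential} selection. First choose $A'\subset A$ via the recursive hypothesis. Then, by the inequality above and averaging, at least half the vertices of $B$ have at most $(\epsilon/2)|A'|$ neighbours in $A'$; call this set $B_1$. Now apply the recursive hypothesis inside $B_1$ (not inside $B$) to obtain $B'\subset B_1$ of the same size as $A'$. Since $B'\subset B_1$, every vertex of $B'$ has degree at most $(\epsilon/2)|A'|$ into $A'$, so $d(A',B')\le\epsilon/2\le\epsilon_1/2$ directly. The combined density of $A'\cup B'$ is then at most $\epsilon_1$ by the computation you sketched. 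The extra factor $\epsilon/4$ in the statement is simply slack to ensure the common target size $|A'|=|B'|$ fits inside $\delta(3\epsilon_1/2,\epsilon_2,H)|B_1|$ after $B$ has been halved to $B_1$. Your simultaneous selection of $A'$ and $B'$ cannot be salvaged, because once $B'$ is chosen independently of $A'$ there is no mechanism left to bound $d(A',B')$ by anything better than $(\epsilon/4)|A||B|/|A'||B'|$.
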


\begin{proof}
Let $G$ be a $H$-free graph on $n \geq 2$ vertices. If $n<k$ then we
may consider any two-vertex induced subgraph of $G$ which has always
density either 0 or 1. Therefore, for $G$ of order less than $k$ we
can take $\delta=2/k$, which is clearly larger than the right hand
side of the inequality in the assertion of the lemma. Thus we can
assume that $n \geq k$. Applying Lemma \ref{lemmaerdoshajnal} to $G$
with $\epsilon/4$ in place of $\epsilon$,  we find two subsets $A$
and $B$ with $|A|,|B| \geq (\epsilon/4)^{k-1} n/k $, such that
either every vertex in $A$ is adjacent to at most
$\frac{\epsilon}{4}|B|$ vertices of $B$ or every vertex of $A$ is
adjacent to at least $(1-\frac{\epsilon}{4})|B|$ vertices of $B$.

Consider the first case in which every vertex in $A$ is adjacent to
at most $\frac{\epsilon}{4}|B|$ vertices of $B$ (the other case can
be treated similarly) and let $G[A]$ be the subgraph of $G$ induced
by the set $A$. By definition of function $\delta$, $G[A]$ contains
a subset $A'$ with
$$|A'|= \delta(3\epsilon_1/2,\epsilon_2,H)\Big(\frac{\epsilon}{4}\Big)^{k}\frac{n}{k} \leq
 \delta(3\epsilon_1/2,\epsilon_2,H)|A|,$$
such that the subgraph induced by $A'$ has edge density at most $\frac{3}{2}\epsilon_1$ or at least
$1-\epsilon_2$. If $A'$ has edge density at least $1-\epsilon_2$ we are done, since
$G[A']$ is an induced subgraph of $G$ with at least
$(\epsilon/4)^{k}k^{-1}\delta(3\epsilon_1/2,\epsilon_2,H)n$
vertices and edge density at least $1-\epsilon_2$. So we may assume
that the edge density in $A'$ is at most $\frac{3}{2}\epsilon_1$.

Let $B_1 \subset B$ be those vertices of $B$ that have at most
$\frac{\epsilon}{2}|A'|$ neighbors in $A'$. Since $A' \subset A$, each vertex of
$A'$ has at most $\frac{\epsilon}{4}|B|$ neighbors in $B$ and the number of edges
$e(A',B) \leq \frac{\epsilon}{4}|A'||B|$. Therefore $B_1$ has at least $|B|/2$ vertices.
Then, by definition of $\delta$, $B_1$ contains a
subset $B'$ with
$$|B'| = \delta(3\epsilon_1/2,\epsilon_2,H)\Big(\frac{\epsilon}{4}\Big)^{k}\frac{n}{k}
\leq \delta(3\epsilon_1/2,\epsilon_2,H)|B_1|,$$
such that the induced subgraph $G[B']$ has edge density at most
$\frac{3}{2}\epsilon_1$ or at least $1-\epsilon_2$. If it has edge
density at least $1-\epsilon_2$ we are done, so we may assume
that the edge density $d(B')$ is at most $\frac{3}{2}\epsilon_1$.

Finally to complete the proof note that, since $|A'|=|B'|$, $|A' \cup B'|=2|A'|$,
$d(A'),d(B') \leq \frac{3}{2}\epsilon_1$, and
$d(A',B') \leq \frac{\epsilon_1}{2}$, we have that
\begin{eqnarray*}
e(A' \cup B')&=&e(A')+e(B')+e(A',B') \leq \frac{3}{2}\epsilon_1 {|A'| \choose 2}+
\frac{3}{2}\epsilon_1 {|B'| \choose 2}+
\frac{\epsilon_1}{2}|A'||B'|\\
&=&2\epsilon_1|A'|^2-3\epsilon_1 |A'|/2 \leq \epsilon_1{2|A'| \choose 2}.
\end{eqnarray*}
Therefore, $d(A' \cup B') \leq \epsilon_1$.
\end{proof}

From this lemma, the proof of our first result, that every $H$-free
graph on $n$ vertices contains a subset of at least $2^{-ck(\log
\frac{1}{\epsilon})^2}n$ vertices with edge density either $ \leq
\epsilon$ or $\geq 1-\epsilon$, follows in a few lines.

\noindent {\bf Proof of Theorem \ref{main}:} Notice that if
$\epsilon_1+\epsilon_2 \geq 1$, then trivially
$\delta(\epsilon_1,\epsilon_2,H) =1$. In particular, if
$\epsilon_1\epsilon_2 \geq \frac{1}{4}$, then $\epsilon_1+\epsilon_2
\geq 1$ and $\delta(\epsilon_1,\epsilon_2,H) =1$. Therefore, by
iterating Lemma \ref{useful} for $t=\log \frac{1}{\epsilon^2}/\log
\frac{3}{2}$ iterations and using that $\epsilon \leq 1/2$, we
obtain
$$\delta(\epsilon,\epsilon,H) \geq
\left(\frac{\epsilon^{k}}{4^{k}k}\right)^t \geq 2^{-\frac{2}{\log
3/2}\left(k(\log 1/\epsilon)^2+\left(2k+\log k \right)\log
1/\epsilon\right)} \geq 2^{-15k (\log 1/\epsilon)^2},$$
which, by definition of $\delta$, completes the proof of the theorem. \qed

Recall the Erd\H{o}s-Szemer\'edi theorem, which states that there is
an absolute constant $c$ such that every graph $G$ on $n$ vertices
with edge density $\epsilon \in (0,1/2)$ has a homogeneous set of
size at least $\frac{c \log n}{\epsilon \log \frac{1}{\epsilon}}$.
Theorem \ref{combined} follows from a simple application of Theorem
\ref{main} and the Erd\H{o}s-Szemer\'edi theorem.

\noindent {\bf Proof of Theorem \ref{combined}:} Let $G$ be a graph
on $n$ vertices which is not $k$-universal, i.e., it is $H$-free for
some fixed graph $H$ on $k$ vertices. Fix
$\epsilon=2^{-\frac{1}{5}\sqrt{\frac{\log n}{k}}}$ and apply Theorem
\ref{main} to $G$. It implies that $G$ contains a subset $W \subset
V(G)$ of size at least $2^{-15k(\log
\frac{1}{\epsilon})^2}n=n^{2/5}$ such that the subgraph induced by
$W$ has edge density at most $\epsilon$ or at least $1-\epsilon$.
Applying the Erd\H{o}s-Szemer\'edi theorem to the induced subgraph
$G[W]$ or its complement and using that $\epsilon \log 1/\epsilon
\leq 4\epsilon^{1/2}$ for all $\epsilon \leq 1$, we obtain  a
homogeneous subset $W' \subset W$ with $$|W'| \geq \frac{c \log
n^{2/5}}{\epsilon \log \frac{1}{\epsilon}} \geq \frac{c\log
n}{10\epsilon^{1/2}} \geq
\frac{c}{10}2^{\frac{1}{10}\sqrt{\frac{\log n}{k}}}\log n,$$ which
completes the proof of Theorem \ref{combined}. \qed

\section{Key Lemma}\label{section12}

In this section we present our key lemma. We use it as a
replacement for Szemer\'edi's regularity lemma in the proofs of several Ramsey-type results,
thereby giving much better estimates.  A very special case of this statement was
essentially proved in Lemma \ref{useful} in the previous section.
Our key lemma generalizes the result of Graham, R\"odl, and
Rucinski \cite{GrRoRu} and has a simpler proof
than the one in  \cite{GrRoRu}.
Roughly, our result says that if $(G_1,\ldots,G_r)$ is a
sequence of graphs on the same vertex set $V$ with the property that
every large subset of $V$ contains a pair of large disjoint sets
with small edge density between them in at least one of the graphs $G_i$,
then every large subset of $V$ contains a large set with
small edge density in one of the $G_i$. To formalize this concept,
we need a couple definitions.

For a graph $G=(V,E)$ and disjoint subsets $W_1,\ldots,W_t \subset V$, the {\it density} $d_{G}(W_1,\ldots,W_t)$
between the $t \geq 2$ vertex subsets $W_1,\ldots,W_t$ is defined by
$$d_G(W_1,\ldots,W_t)=\frac{\sum_{ i < j}
e(W_i,W_j)}{\sum_{i < j } |W_i||W_j|}.$$ If
$|W_1|=\ldots=|W_t|$, then
$$d_G(W_1,\ldots,W_t)={t \choose 2}^{-1}\sum_{ i < j }
d_G(W_i,W_j).$$
Also, in this section if $t=1$ we define the density to be zero.

\begin{definition} \label{d31}
For $\alpha,\rho,\epsilon \in [0,1]$ and positive integer $t$, a
sequence $(G_1,\ldots,G_r)$ of graphs on the same vertex set $V$ is
{\bf $(\alpha,\rho,\epsilon,t)$-sparse} if for all subsets $U
\subset V$ with $|U| \geq \alpha |V|$, there are positive integers
$t_1,\ldots,t_r$ such that $\prod_{i=1}^r t_i \geq t$ and for each
$i \in [r]=\{1, \ldots, r\}$ there are disjoint subsets
$W_{i,1},\ldots,W_{i,t_i} \subset U$ with $|W_{i,1}|=\ldots
=|W_{i,t_i}|=\lceil \rho |U|\rceil$ and
$d_{G_i}(W_{i,1},\ldots,W_{i,t_i})\leq \epsilon$.
\end{definition}

We call a graph $(\alpha,\rho,\epsilon,t)$-sparse if the one-term
sequence consisting of that graph is
$(\alpha,\rho,\epsilon,t)$-sparse. By averaging, if $\alpha'\geq
\alpha$, $\rho' \leq \rho$, $\epsilon'\geq \epsilon$, $t' \leq t$,
and $(G_1,\ldots,G_r)$ is $(\alpha,\rho,\epsilon,t)$-sparse, then
$(G_1,\ldots,G_r)$ is also $(\alpha',\rho',\epsilon',t')$-sparse.
The following is our main result in this section.

\begin{lemma}
\label{l32}
If a sequence of graphs $(G_1,\ldots,G_r)$ with common vertex set $V$ is
$(\frac{1}{2}\alpha\rho,\rho',\epsilon,t)$-sparse
and $(\alpha,\rho,\epsilon/4,2)$-sparse,
then $(G_1,\ldots,G_r)$ is also $(\alpha,\frac{1}{2}\rho\rho',\epsilon,2t)$-sparse.
\end{lemma}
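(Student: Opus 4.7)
The plan is to mirror the two-step structure of the proof of Lemma~\ref{useful}, adapted to the sequence-of-graphs setting. Given $U\subset V$ with $|U|\geq\alpha|V|$, I first apply the $(\alpha,\rho,\epsilon/4,2)$-sparseness hypothesis to $U$: this yields positive integers $s_1,\ldots,s_r$ with $\prod_i s_i\geq 2$ and, for each $i$, disjoint subsets $Y_{i,1},\ldots,Y_{i,s_i}\subset U$ of size $\lceil\rho|U|\rceil$ whose density is at most $\epsilon/4$ in $G_i$. Since $\prod s_i\geq 2$, some index $i_0$ has $s_{i_0}\geq 2$, and averaging over pairs in the $i_0$-family produces two members $Y_1,Y_2$ with $d_{G_{i_0}}(Y_1,Y_2)\leq\epsilon/4$. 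I next clean up: let $Y_2^\star\subset Y_2$ consist of the vertices with at most $\tfrac{\epsilon}{2}|Y_1|$ neighbours in $Y_1$ in $G_{i_0}$, so that $|Y_2^\star|\geq|Y_2|/2$ by Markov. Both $Y_1$ and $Y_2^\star$ still have size at least $\tfrac{\alpha\rho}{2}|V|$.

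Now apply the $(\tfrac{\alpha\rho}{2},\rho',\epsilon,t)$-sparseness hypothesis once to $Y_1$ and once to $Y_2^\star$. For each $\ell\in\{1,2\}$ this gives integers $t_i^{(\ell)}$ with $\prod_i t_i^{(\ell)}\geq t$ and families of disjoint subsets $Z_{i,j}^{(\ell)}$ with density at most $\epsilon$ in $G_i$. Each $Z_{i,j}^{(\ell)}$ has size at least $n_0:=\lceil\tfrac{\rho\rho'}{2}|U|\rceil$ — this is where the factor $1/2$ in the conclusion matters — and an expectation argument (the expected density of a uniformly random $n_0$-subset taken inside each member equals the density of the entire family) lets me pass to subsets $W_{i,j}^{(\ell)}\subset Z_{i,j}^{(\ell)}$ of size exactly $n_0$ while preserving the $\epsilon$-density bound in each $G_i$.

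To assemble the witness, for each $i\neq i_0$ I take the larger of the two families $\{W_{i,j}^{(1)}\}_j$ and $\{W_{i,j}^{(2)}\}_j$ (giving $\max(t_i^{(1)},t_i^{(2)})$ subsets), and for $i=i_0$ I take the disjoint union of both families (giving $t_{i_0}^{(1)}+t_{i_0}^{(2)}$ subsets, disjoint because $Y_1\cap Y_2^\star=\emptyset$). By AM--GM,
\[
(t_{i_0}^{(1)}+t_{i_0}^{(2)})\prod_{i\neq i_0}\max(t_i^{(1)},t_i^{(2)})\ \geq\ 2\sqrt{t_{i_0}^{(1)}t_{i_0}^{(2)}}\cdot\sqrt{\prod_{i\neq i_0}t_i^{(1)}t_i^{(2)}}\ \geq\ 2\sqrt{\prod_i t_i^{(1)}}\cdot\sqrt{\prod_i t_i^{(2)}}\ \geq\ 2t,
\]
so the required combinatorial count of $2t$ is attained.

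The main obstacle is to verify the combined density bound in $G_{i_0}$. The within-side contributions are at most $\epsilon$ by construction; the cross contribution is where the cleanup is used: every vertex of $\bigcup_j W_{i_0,j}^{(2)}\subset Y_2^\star$ has at most $\tfrac{\epsilon}{2}|Y_1|$ neighbours in $Y_1$, hence in $\bigcup_j W_{i_0,j}^{(1)}$. Expanding the combined-density ratio via $\binom{N}{2}=\binom{N_1}{2}+\binom{N_2}{2}+N_1N_2$ with $N_\ell=t_{i_0}^{(\ell)}$ and exploiting the size relation $|Y_1|\leq(2/\rho')n_0+O(1)$ that the factor $1/2$ in $\tfrac{\rho\rho'}{2}$ is there to guarantee, the cross-edge term must be balanced against the slack term $\epsilon N_1N_2n_0^2$ in the numerator so that the total density remains at most $\epsilon$. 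This density-balancing computation is the technical heart of the proof and is where the specific constants $\epsilon/4$ in the hypothesis and $1/2$ in the conclusion are precisely calibrated.
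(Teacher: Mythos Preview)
Your overall architecture matches the paper's, including the AM--GM count, but the cross-density computation at the end does not close. After your single cleanup, each vertex of $\bigcup_j W_{i_0,j}^{(2)}\subset Y_2^\star$ has at most $\tfrac{\epsilon}{2}|Y_1|$ neighbours in $Y_1$, so the number of cross edges is at most $N_2 n_0\cdot\tfrac{\epsilon}{2}|Y_1|\approx \epsilon N_2 n_0^2/\rho'$ (using $|Y_1|\approx 2n_0/\rho'$). To fit under the slack $\epsilon N_1 N_2 n_0^2$ you would need $N_1=t_{i_0}^{(1)}\geq 1/\rho'$. Nothing in the hypothesis forces this: the application of $(\tfrac{\alpha\rho}{2},\rho',\epsilon,t)$-sparseness to $Y_1$ only guarantees $\prod_i t_i^{(1)}\geq t$, and $t_{i_0}^{(1)}$ for the \emph{particular} index $i_0$ determined by the first step may well equal $1$ (e.g.\ take $r=2$, $i_0=1$, and the recursion putting all its weight on coordinate $2$). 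In that case the cross density is of order $\epsilon/\rho'$, not $\epsilon$, and the argument collapses. The constants $\epsilon/4$ and $1/2$ do not rescue this; the problem is structural, not numerical.

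The paper avoids this by interleaving the two recursive calls with a \emph{second} cleanup tailored to the already-chosen family. Concretely: first pass to $X_1\subset X$ consisting of vertices with at most $\tfrac{\epsilon}{2}|Y|$ neighbours in $Y$ (this is your step, guaranteeing $|X_1|\geq|X|/2$); apply the $(\tfrac{\alpha\rho}{2},\rho',\epsilon,t)$-hypothesis to $X_1$ to obtain the families $\{X_{j,\ell}\}$; and only \emph{then} define $Y_1\subset Y$ to be those vertices with at most $\epsilon\,|X_{i_0,1}\cup\cdots\cup X_{i_0,t_{i_0}}|$ neighbours in that specific union. Because every vertex of $X_1$ (hence of the union) has at most $\tfrac{\epsilon}{2}|Y|$ neighbours in $Y$, Markov still gives $|Y_1|\geq |Y|/2$, and now by construction the cross density between the two $i_0$-families is at most $\epsilon$ on the nose, with no dependence on $t_{i_0}^{(1)}$ or $\rho'$. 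This second, adaptive cleanup is the missing idea. As a bonus, since $|X_1|=|Y_1|=\tfrac12\rho|U|$ after trimming, both recursive calls produce subsets of the same size $\tfrac12\rho\rho'|U|$ directly, so your shrinking step becomes unnecessary.
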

\begin{proof}
Since $(G_1,\ldots,G_r)$ is $(\alpha,\rho,\epsilon/4,2)$-sparse,
then for each $U \subset V$ with $|U| \geq \alpha |V|$, there is $i
\in [r]$ and disjoint subsets $X,Y \subset U$ with $|X|=|Y| =
\rho|U|$ and $d_{G_{i}}(X,Y) \leq \epsilon/4$. Let $X_1$ be the set of vertices in
$X$ that have at most $\frac{\epsilon}{2}|Y|$ neighbors in $Y$ in graph $G_i$. Then
$e_{G_i}(X\setminus X_1,Y) \geq \epsilon|X\setminus X_1||Y|/2$ and we also have $e_{G_i}(X,Y) \leq \epsilon|X||Y|/4$.
Therefore $|X_1| \geq |X|/2 \geq \frac{1}{2}\rho|U|$ and by removing extra vertices we
assume that $|X_1|= \frac{1}{2}\rho|U|$.

Since $(G_1,\ldots,G_r)$ is
$(\frac{1}{2}\alpha\rho,\rho',\epsilon,t)$-sparse, then there are
positive integers $t_{1},\ldots,t_{r}$ such that $\prod_{j=1}^r
t_{j} \geq t$ and for each $j \in [r]$ there are disjoint subsets
$X_{j,1},\ldots,X_{j,t_j} \subset X_1$ of size
$|X_{j,1}|=\ldots=|X_{j,t_j}|=\rho' |X_1|$ with density
$d_{G_j}(X_{j,1},\ldots,X_{j,t_j})\leq \epsilon$. Let $Y_1$ the set
of vertices in $Y$ that have at most $\epsilon|X_{i,1} \cup \ldots
\cup X_{i,t_i}|$ neighbors in $X_{i,1} \cup \ldots \cup X_{i,t_i}$
in graph $G_i$. Since every vertex of $X_1$ is adjacent to at most
$\frac{\epsilon}{2}|Y|$ vertices of $Y$ and since $X_{i,1} \cup
\ldots \cup X_{i,t_i}\subset X_1$ we have that $d_{G_i}(X_{i,1} \cup
\ldots \cup X_{i,t_i},Y)\leq \epsilon/2$. On the other hand,
$d_{G_i}(X_{i,1} \cup \ldots \cup X_{i,t_i},Y\setminus Y_1)\geq
\epsilon$. Therefore $|Y_1|\geq |Y|/2$, so again we can assume that
$|Y_1|=\frac{1}{2}\rho|U|=|X_1|$. Since $(G_1,\ldots,G_r)$ is
$(\frac{1}{2}\alpha\rho,\rho',\epsilon,t)$-sparse, then there are
positive integers $s_{1},\ldots,s_{r}$ such that $\prod_{j=1}^r
s_{j} \geq t$ and for each $j \in [r]$ there are disjoint subsets
$Y_{j,1},\ldots,Y_{j,s_j} \subset Y_1$ with
$d_{G_j}(Y_{j,1},\ldots,Y_{j,s_j})\leq \epsilon$ and
$|Y_{j,1}|=\ldots=|Y_{j,s_j}|=\rho' |Y_1|$.

By the above construction, the edge density between
$X_{i,1}\cup \ldots \cup X_{i,t_i}$ and $Y_{i,1}\cup \ldots \cup Y_{i,s_i}$
is bounded from above by $\epsilon$. We also have that both
 $d_{G_i}(X_{i,1},\ldots,X_{i,t_i})$ and $d_{G_i}(Y_{i,1},\ldots,Y_{i,s_i})$ are at most $\epsilon$
and these two sets have the same size.
Therefore $d_{G_i}(X_{i,1},\ldots,X_{i,t_i},Y_{i,1},\ldots,Y_{i,s_i}) \leq \epsilon$, implying
that $(G_1,\ldots,G_r)$ is
$\left(\alpha,\frac{1}{2}\rho\rho',\epsilon,u\right)$-sparse with
$u=(t_i+s_i)\prod_{j \in [r]\setminus \{i\}}\max(t_j,s_j)$ for some $i$.
By the arithmetic mean-geometric mean inequality, we have
$$t^2 \leq \prod_{j=1}^r t_j \prod_{j=1}^r s_j \leq
t_is_i\left(\prod_{j \in [r]\setminus \{i\}} \max(t_j,s_j)\right)^2
=\frac{t_is_i}{(t_i+s_i)^2}u^2 \leq \frac{u^2}{4}.$$
Thus $u \geq 2t$. Altogether this shows that $(G_1,\ldots,G_r)$ is
$\left(\alpha,\frac{1}{2}\rho\rho',\epsilon,2t\right)$-sparse, completing the proof.
\end{proof}

Rather than using this lemma directly, in applications we usually need the following two
corollaries. The first one is obtained by simply applying Lemma \ref{l32} $h-1$ times.

\begin{corollary}\label{secondcor}
If $(G_1,\ldots,G_r)$ is $(\alpha,\rho,\epsilon/4,2)$-sparse and $h$
is a positive integer, then $(G_1,\ldots,G_r)$ is also
$\left((\frac{2}{\rho})^{h-1}\alpha,2^{1-h}\rho^{h},\epsilon,2^h\right)$-sparse.
\end{corollary}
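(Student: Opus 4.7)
The plan is to proceed by induction on $h$, using Lemma \ref{l32} as the single engine driving the inductive step, and relying on the monotonicity of the sparsity parameters (noted just after Definition \ref{d31}) to absorb cosmetic adjustments.

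For the base case $h=1$, the claim reduces to showing that $(G_1,\ldots,G_r)$ is $(\alpha,\rho,\epsilon,2)$-sparse, which follows immediately from the hypothesis $(\alpha,\rho,\epsilon/4,2)$-sparse and monotonicity in $\epsilon$.

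For the inductive step, I would assume $(G_1,\ldots,G_r)$ is $\bigl((2/\rho)^{h-1}\alpha,\,2^{1-h}\rho^{h},\,\epsilon,\,2^{h}\bigr)$-sparse and apply Lemma \ref{l32} with parameters chosen so that it upgrades this to $\bigl((2/\rho)^{h}\alpha,\,2^{-h}\rho^{h+1},\,\epsilon,\,2^{h+1}\bigr)$-sparse. The key book-keeping is to set $\alpha^\ast = (2/\rho)^{h}\alpha$, so that $\tfrac{1}{2}\alpha^\ast \rho = (2/\rho)^{h-1}\alpha$; the inductive hypothesis then provides exactly the $\bigl(\tfrac{1}{2}\alpha^\ast\rho,\,\rho',\,\epsilon,\,t\bigr)$-sparse input required by Lemma \ref{l32} with $\rho' = 2^{1-h}\rho^{h}$ and $t=2^{h}$. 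The other required input, that the sequence be $(\alpha^\ast,\rho,\epsilon/4,2)$-sparse, follows from the original hypothesis and monotonicity in $\alpha$ (since $\alpha^\ast \geq \alpha$ because $\rho\leq 1$). Lemma \ref{l32} then outputs $\bigl(\alpha^\ast,\,\tfrac{1}{2}\rho\rho',\,\epsilon,\,2t\bigr)$-sparse, and substituting the values of $\rho'$ and $t$ yields precisely the desired parameters.

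There is essentially no obstacle here: the proof is a bare induction and the only subtlety is matching the first argument $\tfrac{1}{2}\alpha\rho$ in the hypothesis of Lemma \ref{l32} with the desired first argument $(2/\rho)^{h}\alpha$ in the conclusion, which is what motivates the peculiar factor $(2/\rho)^{h-1}$ in the statement. Since the proof is two or three lines of parameter tracking, I would probably present it as a direct induction without any further lemmas or auxiliary constructions.
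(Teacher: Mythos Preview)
Your proposal is correct and is exactly the approach the paper takes: the paper states only that the corollary ``is obtained by simply applying Lemma \ref{l32} $h-1$ times,'' and your induction spells out precisely those $h-1$ applications together with the monotonicity needed to feed each step. The parameter bookkeeping you describe (setting $\alpha^\ast=(2/\rho)^{h}\alpha$ so that $\tfrac12\alpha^\ast\rho$ matches the inductive hypothesis, and using $\alpha^\ast\geq\alpha$ via $\rho\leq 1$) is exactly what is required.
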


If we use the last statement with $h=r\log \frac{1}{\epsilon}$ and $\alpha=(\frac{\rho}{2})^{h-1}$,
then we get that there is an index $i \in [r]$ and disjoint
subsets $W_1,\ldots,W_{t} \subset V$ with $t \geq 2^{h/r}=
\frac{1}{\epsilon}$, $|W_1|=\ldots=|W_t|= 2^{1-h}\rho^{h}|V|$,
and $d_{G_i}(W_1,\ldots,W_t) \leq \epsilon$. Since
${|W_1| \choose 2} \leq \frac{\epsilon}{t}{t|W_1| \choose 2}$, even if every $W_i$ has edge density one, still the edge density in the
set $W_1 \cup \ldots \cup W_t$ is at most $2\epsilon$. Therefore, (using $\epsilon/2$ instead of $\epsilon$) we
have the following corollary.

\begin{corollary}\label{corbip}
If $(G_1,\ldots,G_r)$ is
$((\frac{\rho}{2})^{h-1},\rho,\epsilon/8,2)$-sparse where $h=r\log \frac{2}{\epsilon}$,
then there is $i \in [r]$ and an
induced subgraph $G'$ of $G_i$ on
$2\epsilon^{-1}2^{1-h}\rho^{h}|V|$ vertices that has edge density at
most $\epsilon$.
\end{corollary}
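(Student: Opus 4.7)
The plan is to reduce directly to Corollary \ref{secondcor} applied with $\epsilon/2$ in place of $\epsilon$. Observe that $(\epsilon/2)/4 = \epsilon/8$, so the hypothesis that $(G_1,\ldots,G_r)$ is $((\rho/2)^{h-1},\rho,\epsilon/8,2)$-sparse is exactly what Corollary \ref{secondcor} needs once we rescale $\epsilon$. Plugging in $h = r\log(2/\epsilon)$ and $\alpha = (\rho/2)^{h-1}$, the corollary produces, for $U = V$ itself, an index $i \in [r]$ and disjoint subsets $W_1,\ldots,W_t \subset V$ of common size $|W_j| = 2^{1-h}\rho^h|V|$, where $t \geq 2^{h/r} = 2/\epsilon$, and $d_{G_i}(W_1,\ldots,W_t) \leq \epsilon/2$.

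Next I would let $W = W_1 \cup \cdots \cup W_t$ and bound the edge density of $G_i[W]$. Writing $N = t|W_1|$, the number of edges inside $W$ is at most
\[ \sum_{j=1}^t e_{G_i}(W_j) + \sum_{j<j'} e_{G_i}(W_j,W_{j'}) \leq t\binom{|W_1|}{2} + \tfrac{\epsilon}{2}\sum_{j<j'}|W_j||W_{j'}|. \]
The first term is bounded trivially even when every $W_j$ is a clique. Since $t\binom{|W_1|}{2}/\binom{N}{2} = (|W_1|-1)/(N-1) \leq 1/t \leq \epsilon/2$, the within-part contribution to the overall edge density is at most $\epsilon/2$; the between-part contribution is at most $\epsilon/2$ by the choice of sparsity. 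Together this gives $d_{G_i}(W) \leq \epsilon$.

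Finally I need to match the stated vertex count. We have $|W| = t|W_1| \geq (2/\epsilon) \cdot 2^{1-h}\rho^h|V| = 2\epsilon^{-1}2^{1-h}\rho^h|V|$, so $|W|$ is at least the required size. If $|W|$ is strictly larger, I would use the averaging observation already recorded in Section 2 of the paper: the edge density of $G_i[W]$ is the average of the edge densities of its induced subgraphs on any prescribed smaller number of vertices, so there exists an induced subgraph $G' \subseteq G_i[W]$ on exactly $2\epsilon^{-1}2^{1-h}\rho^h|V|$ vertices with $d(G') \leq d(G_i[W]) \leq \epsilon$.

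This proof is really just bookkeeping on top of Corollary \ref{secondcor}, so there is no genuine obstacle; the only thing to be mindful of is the factor-of-two slack, which is why we apply the earlier corollary with $\epsilon/2$ (giving two separate $\epsilon/2$ contributions that sum to the claimed $\epsilon$) and choose $h = r\log(2/\epsilon)$ rather than $r\log(1/\epsilon)$ so that $t \geq 2/\epsilon$ absorbs the within-part density.
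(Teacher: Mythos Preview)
Your proposal is correct and follows essentially the same approach as the paper: apply Corollary~\ref{secondcor} with $\epsilon/2$ in place of $\epsilon$ and $\alpha=(\rho/2)^{h-1}$, take $U=V$, pick the index $i$ with $t_i\ge 2^{h/r}=2/\epsilon$, and bound the edge density of $W_1\cup\cdots\cup W_{t_i}$ by the sum of the within-part contribution (at most $\epsilon/2$ since $t_i\ge 2/\epsilon$) and the between-part contribution (at most $\epsilon/2$ by sparsity). Your final averaging step to hit the stated vertex count exactly is a harmless addendum that the paper leaves implicit.
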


The key lemma in the paper of Graham, R\"odl, and Rucinski
\cite{GrRoRu} on the Ramsey number of graphs (their Lemma 1) is
essentially the $r=1$ case of Corollary \ref{corbip}.

\section{Edge distribution in $H$-free graphs}\label{sectionmulticolor}

In this section, we obtain several results on the edge distribution of
graphs with a forbidden induced subgraph which answer open questions by
Nikiforov and Chung-Graham. We first prove a strengthening of R\"odl's
theorem (mentioned in the introduction) without using the regularity lemma.
Then we present a proof of Theorem \ref{dev} on the
dependence of error terms in quasirandom properties. We
conclude this section with an upper bound on the maximum edge discrepancy
in subgraphs of $H$-free graphs.
To obtain these results we need the following generalization of Lemma
\ref{lemmaerdoshajnal}.

\begin{lemma}\label{lemmaerdoshajnal4}
Let $H$ be a $k$-vertex graph and let $G$ be a graph on $n \geq k^2$ vertices
that contains less than $n^{k}(1-\frac{k^2}{2n})\prod_{i=1}^{k-1} (1-\delta_i)\epsilon_i^{k-i}$
labeled induced copies of $H$, where $\epsilon_0=1$ and $\epsilon_i,\delta_{i} \in (0,1)$ for all
$1 \leq i \leq k-1$. Then there is an index $i\leq k-1$ and disjoint subsets $A$ and $B$ of $G$ with
$|A|\geq \frac{\delta_in}{k(k-i)}\prod_{j<i} \epsilon_j$ and $|B|
\geq \frac{n}{k}\prod_{j < i} \epsilon_j$ such that either every vertex of $A$
is adjacent to at most $\epsilon_i|B|$ vertices of $B$ or every
vertex of $A$ is adjacent to at least $(1-\epsilon_i)|B|$ vertices
of $B$.
\end{lemma}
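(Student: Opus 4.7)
The plan is to prove the contrapositive: assuming that no $i\leq k-1$ admits disjoint subsets $A,B$ of the stated sizes satisfying one of the two degree alternatives, I will show that $G$ contains at least $n^k(1-\tfrac{k^2}{2n})\prod_{i=1}^{k-1}(1-\delta_i)\epsilon_i^{k-i}$ labeled induced copies of $H$, contradicting the hypothesis. I would fix an arbitrary ordering $v_1,\ldots,v_k$ of $V(H)$ and count labeled induced copies via a greedy embedding. For a partial induced embedding $(u_1,\ldots,u_i)$ and $j>i$, let
\begin{equation*}
L_j(u_1,\ldots,u_i) = \{u \in V(G)\setminus\{u_1,\ldots,u_i\}: uu_\ell \in E(G)\Leftrightarrow v_\ell v_j \in E(H) \text{ for all } \ell \leq i\}
\end{equation*}
be the set of vertices still eligible to play the role of $v_j$; the labeled induced copies of $H$ are in bijection with sequences $(u_1,\ldots,u_k)$ obtained by iteratively choosing $u_i\in L_i(u_1,\ldots,u_{i-1})$. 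Call $(u_1,\ldots,u_i)$ \emph{typical} if $|L_j(u_1,\ldots,u_i)|\geq\epsilon_1\cdots\epsilon_i(n-i)$ for every $j>i$; the empty prefix is typical.

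The heart of the proof is the inductive claim that at each step $i\leq k-1$ and each typical prefix $(u_1,\ldots,u_{i-1})$, at least $(1-\delta_i)|L_i(u_1,\ldots,u_{i-1})|$ choices of $u_i$ yield a typical extension. Granting the claim, iterating through $i=1,\ldots,k-1$ and letting the final step contribute the full factor $|L_k|$ gives at least
\begin{equation*}
\prod_{i=1}^{k-1}(1-\delta_i)\cdot\prod_{j=1}^{k-1}\epsilon_j^{k-j}\cdot n(n-1)\cdots(n-k+1)
\end{equation*}
typical length-$k$ embeddings, and $n!/(n-k)!\geq n^k(1-k^2/(2n))$ yields the desired lower bound. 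To prove the claim, I would call $u_i\in L_i$ \emph{bad for} $j>i$ when $|L_j(u_1,\ldots,u_i)|<\epsilon_i|L_j(u_1,\ldots,u_{i-1})|$. Since $L_j(u_1,\ldots,u_i)$ is (up to removing $u_i$ itself) the intersection of $L_j(u_1,\ldots,u_{i-1})$ with $N(u_i)$ or $V\setminus N(u_i)$ according to whether $v_iv_j\in E(H)$, badness means $u_i$ has fewer than $\epsilon_i|L_j|$ neighbors in $L_j$ (if $v_iv_j\in E(H)$) or more than $(1-\epsilon_i)|L_j|$ neighbors in $L_j$ (otherwise). If more than $\delta_i|L_i|$ vertices are bad, pigeonhole yields some $j>i$ together with a set $A^*\subseteq L_i$ of size exceeding $\tfrac{\delta_i}{k-i}|L_i|$ whose vertices all share one violation type.

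It remains to extract the disjoint pair $(A,B)$ required by the conclusion. The sets $L_i,L_j$ are determined by the adjacency patterns of $v_i,v_j$ to $v_1,\ldots,v_{i-1}$, so they are either disjoint (patterns differ) or identical (patterns coincide). In the disjoint case take $A=A^*$, $B=L_j$; in the identical case take $A\subseteq A^*$ of the minimum size $\tfrac{\delta_i n}{k(k-i)}\prod_{j'<i}\epsilon_{j'}$ and $B=L_j\setminus A$, which still meets the threshold because typicality combined with $n\geq k^2$ gives the substantial slack $|L_j|\geq(k-1)\tfrac{n}{k}\prod_{j'<i}\epsilon_{j'}$. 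The main obstacle will be this identical-patterns case: transferring the degree alternative from its original reference set $L_j$ to the slightly smaller $B=L_j\setminus A$ costs a factor of at most $(k-1)/(k-2)$, which is absorbed into the constants by a slight tightening of the ``bad'' threshold; the same adjustment handles the $-1$ correction in $|L_j(u_1,\ldots,u_i)|\geq\epsilon_i|L_j|-1$ that arises precisely when $u_i\in L_j$ (i.e.\ exactly in the identical-patterns case). All remaining steps are routine bookkeeping.
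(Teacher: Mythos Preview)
Your greedy-embedding outline is the same engine the paper uses, but you are missing the one device that makes the constants come out exactly: an initial equitable partition $V=V_1\cup\cdots\cup V_k$ with $|V_i|=n/k$, after which one only counts labeled induced copies sending $v_i$ into $V_i$. With the partition in place, the candidate set for $v_i$ lives inside $V_i$ and the candidate set for $v_j$ lives inside $V_j$, so the pair $(A,B)$ extracted at a failed step is automatically disjoint and the degree alternative transfers verbatim. The paper pays for the partition by a random-partition averaging step converting the global hypothesis (fewer than $n^k(1-\tfrac{k^2}{2n})\prod(1-\delta_i)\epsilon_i^{k-i}$ copies) into the partitioned one (fewer than $(n/k)^k\prod(1-\delta_i)\epsilon_i^{k-i}$ partition-respecting copies).

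Without the partition, your ``identical-patterns'' case is not a corner case: it occurs whenever some $v_j$ with $j>i$ has the same adjacency profile to $\{v_1,\dots,v_{i-1}\}$ as $v_i$, and in particular it \emph{always} occurs at step $i=1$, where every $L_j$ equals $V(G)$. Your fix ($A\subset A^*$ of the minimum size, $B=L_j\setminus A$) does not recover the stated degree alternative. In the ``at most $\epsilon_i|B|$'' direction you only know each $u\in A$ has fewer than $\epsilon_i|L_j|$ neighbours in $L_j$, hence fewer than $\epsilon_i|L_j|$ in $B$; to get $\le\epsilon_i|B|$ you need $\epsilon_i|A|\le 0$. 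In the ``at least $(1-\epsilon_i)|B|$'' direction it is worse: from $>(1-\epsilon_i)|L_j|$ neighbours in $L_j$ you can only deduce $>(1-\epsilon_i)|L_j|-|A|$ in $B$, and $(1-\epsilon_i)|L_j|-|A|\ge(1-\epsilon_i)(|L_j|-|A|)$ is equivalent to $\epsilon_i|A|\le 0$ again. Your proposed remedy, tightening the bad threshold to some $\epsilon_i'<\epsilon_i$, replaces $\epsilon_i^{k-i}$ by $(\epsilon_i')^{k-i}$ in the final count; since your lower bound $n!/(n-k)!\ge n^k(1-k^2/(2n))$ is essentially sharp, there is no slack to absorb this, and the count falls strictly below the hypothesis threshold. (Note also that your claimed blow-up factor $(k-1)/(k-2)$ is undefined for $k=2$.) This matters for the applications: in the proof of Theorem~\ref{dev} the parameters are tuned so that $\prod(1-\delta_i)\epsilon_i^{k-i}$ barely exceeds $(1-\epsilon/2)2^{-\binom{k}{2}}$, and any constant-factor loss breaks the argument.

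The repair is exactly the paper's: partition first, then run your greedy embedding inside the parts. Disjointness is then free, no tightening is needed, and the rest of your bookkeeping goes through unchanged.
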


\begin{proof}
Let $M$ denote the number of labeled induced copies of $H$ in $G$, which by our assumption is at most
\begin{eqnarray}
\label{M-bound}
M < n^{k}\left(1-\frac{k^2}{2n}\right)\prod_{i=1}^{k-1} (1-\delta_i)\epsilon_i^{k-i}.
\end{eqnarray}
We may assume that the vertex set of $H$ is $[k]$. Consider a random
partition $V_1 \cup \ldots \cup V_k$ of the vertices of $G$ such that each $V_i$ has
cardinality $n/k$. Note that for any such partition there are $(n/k)^k$ ordered
$k$-tuples of vertices of $G$ with the property that the $i$-th vertex of the $k$-tuple is in $V_i$ for all $i
\in [k]$.
On the other hand the total number of ordered $k$-tuples of vertices is $n(n-1)\cdots(n-k+1)$ and each of these
$k$-tuples
has the above property with equal probability. This implies that for any given
$k$-tuple the probability that its $i$-th vertex is in $V_i$ for all $i \in [k]$ equals
$\prod_{i=1}^k \frac{n/k}{n-i+1}$. In particular, by linearity of expectation,
the expected number of labeled induced copies of $H$ in $G$
for which the image of every vertex $i \in [k]$ is in $V_i$ is at most $M\cdot \prod_{i=1}^k \frac{n/k}{n-i+1}$.
Using that $\prod (1-x_i) \geq 1-\sum x_i$ for any $0 \leq x_i \leq 1$ and that $n \geq k^2$, we obtain

\begin{eqnarray*}
\prod_{i=1}^k
\frac{n/k}{n-i+1} &=& k^{-k}\prod_{i=0}^{k-1}(1-i/n)^{-1} \leq
k^{-k}\left(1-\sum_{i=0}^{k-1} i/n\right)^{-1} = k^{-k}\left(1-{k
\choose 2}/n\right)^{-1}\\ &<&
\left(1-\frac{k^2}{2n}\right)^{-1}k^{-k}.
\end{eqnarray*}
This, together with (\ref{M-bound}), shows that there is a partition $V_1 \cup \ldots \cup V_k$ of $G$ into sets
of cardinality $n/k$ such that the total number of labeled induced copies of $H$ in $G$
for which the image of every vertex $i\in [k]$ is in $V_i$ is less than
\begin{equation}
\label{upbound} M\left(1-\frac{k^2}{2n}\right)^{-1}k^{-k}<k^{-k}n^{k}\prod_{i=1}^{k-1}
(1-\delta_i)\epsilon_i^{k-i}.
\end{equation}

We use this estimate to construct sets $A$ and $B$ which satisfy the assertion of the lemma.
For a vertex $v \in V$, the {\it neighborhood} $N(v)$ is the set of
vertices of $G$ that are adjacent to $v$. For $v \in V_i$ and a
subset $S \subset V_j$ with $i \not = j$, let $\tilde{N}(v,S)=N(v) \cap S$
if $(i,j)$ is an edge of $H$ and $\tilde{N}(v,S)=S \setminus N(v)$
otherwise. We will try iteratively to build many induced copies of $H$.
After $i$ steps, we will
have vertices $v_1,\ldots,v_{i}$ with $v_j \in
V_j$ for $j \leq i$ and subsets
$V_{i+1,i}, V_{i+2,i},\ldots, V_{k,i}$ such that
\begin{enumerate}
\item $V_{\ell,i}$ is a subset of $V_{\ell}$ of size $|V_{\ell,i}| \geq \frac{n}{k}\prod_{j=1}^i\epsilon_j$ for all $i+1 \leq
\ell \leq k$,
\item for $1 \leq j<\ell \leq i$, $(v_j,v_{\ell})$ is an edge of $G$  if and
only if $(j,\ell)$ is an edge of $H$,
\item and if $j \leq i<\ell$ and $w \in V_{\ell,i}$, then
$(v_j,w)$ is an edge of $G$ if and only if $(j,\ell)$ is an edge of
$H$.
\end{enumerate}

In the first step, we call a vertex $v\in V_1$ {\it good} if
$|\tilde{N}(v,V_i)| \geq \epsilon_1|V_i|$ for each $i >1$. If less than a
fraction $1-\delta_1$ of the vertices in $V_1$ are good, then, by
the pigeonhole principle, there is a subset $A \subset V_1$ with
$|A|\geq \frac{\delta_1}{k-1}|V_1|=\frac{\delta_1}{k(k-1)}n$ and an index $j > 1$ such that
$|\tilde{N}(v,V_j)| < \epsilon_1|V_j|$ for each $v \in A$. Letting $B=V_j$, one can easily check that
$A$ and $B$ satisfy the assertion of the lemma. Hence, we may assume that at least a
fraction $1-\delta_1$ of the vertices $v_1 \in V_1$ are good, choose any good $v_1$ and
define $V_{i,1}=\tilde{N}(v_1,V_i)$ for $i>1$, completing the first step.

Suppose that after step $i$ the properties 1-3 are satisfied. Then,
in step $i+1$, we again call a vertex $v \in V_{i+1,i}$ {\it good} if
$|\tilde{N}(v,V_{j,i})| \geq \epsilon_{i+1}|V_{j,i}|$ for each $j>i+1$. If
less than a fraction $1-\delta_{i+1}$ vertices of $V_{i+1,i}$ are
good, then, by the pigeonhole principle, there is a subset $A
\subset V_{i+1,i}$ with
$|A|\geq \frac{\delta_{i+1}}{k-i-1}|V_{i+1,i}|$
and index $j >i+1$ such that
$|\tilde{N}(v,V_{j,i})|<\epsilon_{i+1}|V_{j,i}|$ for each $v \in A$. Letting,
$B=V_{j,i}$, one can check using properties 1-3, that
$A$ and $B$ satisfy the assertion of the lemma. Hence, we may assume that a
fraction $1-\delta_{i+1}$ of the vertices $v_{i+1} \in V_{i+1,i}$
are good, choose any good $v_{i+1}$ and define $V_{j,i+1}=\tilde{N}(v_{i+1},V_{j,i})$ for $j>i+2$,
completing step $i+1$. Notice that after step $i+1$, we have
$|V_{j,i+1}|\geq \epsilon_{i+1}|V_{j,i}|$ for $j>i+1$, which guarantees that
property 1 is satisfied. The remaining properties (2 and 3) follow from
our construction of sets $V_{j,i+1}$.

Thus if our process fails in one of the first $k-1$ steps we obtain desired sets $A$ and $B$.
Suppose now that we successfully performed $k-1$ steps. Note that in step $i+1$, we had at least $(1-\delta_{i+1})|V_{i+1,i}| \geq
\frac{n}{k}(1-\delta_{i+1})\prod_{j=1}^i \epsilon_j$ vertices to choose for vertex $v_{i+1}$. Also note that, by property 3, after step $k-1$
we can choose any vertex in the set $V_{k,k-1}$ to be $v_k$. Moreover, by the property 2, every choice of the vertices $v_1, \ldots, v_k$ form a
labeled induced copy of $H$. Altogether, this gives at least
\begin{eqnarray*}
|V_{k,k-1}| \cdot \prod_{i=1}^{k-1}\bigg( \frac{n}{k}(1-\delta_{i})\prod_{0 \leq j<i}
\epsilon_j\bigg) &\geq& \frac{n}{k}\prod_{j=1}^{k-1}\epsilon_j \, \cdot \,
\prod_{i=1}^{k-1}\bigg( \frac{n}{k}(1-\delta_{i})\prod_{0 \leq j<i}
\epsilon_j\bigg)\\
&=&(n/k)^k\prod_{i=1}^{k-1}(1-\delta_i)\epsilon_i^{k-i}
\end{eqnarray*}
labeled induced copies of $H$ for which the image of every vertex $i \in [k]$ is in $V_i$.
This contradicts (\ref{upbound}) and completes the proof.
\end{proof}

Notice that the number of induced copies of $H$ in any induced
subgraph of $G$ is at most the number of induced copies of $H$ in
$G$. Let $\epsilon_i=\epsilon\leq 1/2$ and $\delta_i=\frac{1}{2}$
for $1 \leq i \leq k-1$ and let $\alpha \geq k^2/n$. Applying Lemma
\ref{lemmaerdoshajnal4} with these $\epsilon_i, \delta_i$ to subsets
of $G$ of size $\alpha n$ and using that $\frac{\epsilon^{i-1}}{k-i}
\geq \epsilon^{k-1}, 1-\frac{k^2}{2\alpha n}\geq 1/2$ we obtain the
following corollary.

\begin{corollary}\label{last6}
Let $H$ be a graph with $k$ vertices, $\alpha \geq k^2/n$, $\epsilon
\leq 1/2$, and $G$ be a graph with at most $2^{-k}\epsilon^{k
\choose 2}(\alpha n)^{k}$ induced copies of $H$. Then the pair $(G,
\bar G)$ is $(\alpha,\frac{\epsilon^{k-1}}{2k},\epsilon,2)$-sparse.
\end{corollary}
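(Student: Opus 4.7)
The plan is to derive the corollary by applying Lemma \ref{lemmaerdoshajnal4} to every sufficiently large subset $U \subset V(G)$ with the uniform parameter choices $\epsilon_i = \epsilon$ and $\delta_i = 1/2$ for $1 \le i \le k-1$. First I fix an arbitrary $U \subset V(G)$ with $|U| \geq \alpha n$ and consider the induced subgraph $G[U]$. The number of labeled induced copies of $H$ in $G[U]$ is bounded above by the number in $G$, hence by $2^{-k}\epsilon^{\binom{k}{2}}(\alpha n)^{k}$. With the chosen $\epsilon_i,\delta_i$, the threshold appearing in Lemma \ref{lemmaerdoshajnal4} becomes
\[
|U|^{k}\Bigl(1-\tfrac{k^{2}}{2|U|}\Bigr)\prod_{i=1}^{k-1}(1-\delta_i)\epsilon_i^{k-i} = |U|^{k}\Bigl(1-\tfrac{k^{2}}{2|U|}\Bigr)2^{-(k-1)}\epsilon^{\binom{k}{2}}.
\]
Since $|U|\geq \alpha n\geq k^{2}$, we have $1-k^{2}/(2|U|)\geq 1/2$, so this threshold is at least $2^{-k}(\alpha n)^{k}\epsilon^{\binom{k}{2}}$, matching our assumed bound on the number of induced copies of $H$ in $G[U]$.

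With the hypothesis of Lemma \ref{lemmaerdoshajnal4} satisfied, I obtain an index $i\in [k-1]$ and disjoint subsets $A,B\subset U$ with
\[
|A| \ge \frac{|U|}{2k(k-i)}\epsilon^{i-1}, \qquad |B| \ge \frac{|U|}{k}\epsilon^{i-1},
\]
such that in $G$ either every vertex of $A$ has at most $\epsilon|B|$ neighbors in $B$, or every vertex of $A$ has at least $(1-\epsilon)|B|$ neighbors in $B$. In the first case $d_G(A,B)\le \epsilon$; in the second case $d_{\bar G}(A,B)\le \epsilon$.

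Next I verify the size bounds $|A|,|B|\ge \rho|U|$ with $\rho=\epsilon^{k-1}/(2k)$. Because $\epsilon\le 1/2$ and $1\le k-i\le k-1$, we have $(k-i)\epsilon^{k-i}\le (k-i)2^{-(k-i)}\le 1$, which rearranges to $\epsilon^{i-1}/(k-i)\ge \epsilon^{k-1}$. Hence $|A|\ge \rho|U|$, and since $|B|$ is only larger, $|B|\ge \rho|U|$ as well. I can therefore choose subsets $W_1\subset A$, $W_2\subset B$ of common size $\lceil \rho|U|\rceil$; then $d_{G_{i^{*}}}(W_1,W_2)\le \epsilon$ for $i^{*}\in\{1,2\}$ corresponding to whichever of $G,\bar G$ yielded the density bound.

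Setting $t_{i^{*}}=2$ and the other index to $1$ (using the convention that a singleton sequence has density $0$), we get $t_1\cdot t_2\ge 2$, witnessing that the pair $(G,\bar G)$ is $(\alpha,\epsilon^{k-1}/(2k),\epsilon,2)$-sparse. The only genuinely calculational step is verifying $\epsilon^{i-1}/(k-i)\ge \epsilon^{k-1}$ uniformly in $i$; this is the main (mild) obstacle, and it rests solely on the assumption $\epsilon\le 1/2$ together with $j2^{-j}\le 1$ for $j\ge 1$. A boundary concern is that Lemma \ref{lemmaerdoshajnal4} is stated with a strict inequality on the induced-copy count while the corollary uses ``at most''; this is harmless since the strictness can be recovered by the slack from $|U|\ge \alpha n\ge k^{2}$ (the bound $1-k^{2}/(2|U|)\ge 1/2$ is strict whenever $|U|>k^{2}$, and the edge cases introduce no real difficulty).
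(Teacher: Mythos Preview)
Your proof is correct and follows essentially the same approach as the paper: apply Lemma~\ref{lemmaerdoshajnal4} with $\epsilon_i=\epsilon$ and $\delta_i=1/2$ to each subset $U$ of size at least $\alpha n$, then invoke $\epsilon^{i-1}/(k-i)\ge\epsilon^{k-1}$ (valid since $\epsilon\le 1/2$) together with $1-k^2/(2|U|)\ge 1/2$ to get sets of the required size. The paper's derivation is the short paragraph immediately preceding the corollary, and your write-up simply fills in the details it leaves implicit (including the verification $(k-i)\epsilon^{k-i}\le 1$ and the passage to equal-size subsets, which goes through by averaging).
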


The next statement strengthens Theorem
\ref{main} by allowing for many induced copies of $H$.
It follows from Corollary \ref{corbip} with $r=2, h=2\log (2/\epsilon),
\rho=\frac{\epsilon^{k-1}}{2k}$,
combined with the last statement in which we set $\alpha=(\rho/2)^{h-1}$.

\begin{corollary}\label{maingeneralized7}
There is a constant $c$ such that for each $\epsilon \in (0,1/2)$
and graph $H$ on $k$ vertices, every graph $G$ on $n$ vertices with less than
$2^{-c(k \log \frac{1}{\epsilon})^2}n^k$ induced copies of
$H$ contains an induced subgraph of size at least $2^{-ck (\log
\frac{1}{\epsilon})^2}n$ with edge density at most $\epsilon$ or at
least $1-\epsilon$.
\end{corollary}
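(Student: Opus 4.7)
The plan is to build a short two-step pipeline from the preceding results in this section. Corollary \ref{last6} converts the hypothesis ``$G$ has few induced copies of $H$'' into a sparsity statement for the pair $(G,\bar G)$; Corollary \ref{corbip} then converts that sparsity statement into the existence of a large induced subgraph of either $G$ or of $\bar G$ with edge density at most $\epsilon$. Since a low-density induced subgraph of $\bar G$ is exactly a high-density induced subgraph of $G$, this immediately delivers the dichotomy ``edge density at most $\epsilon$ or at least $1-\epsilon$'' in the conclusion.

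In more detail, I would choose $\rho = (\epsilon/8)^{k-1}/(2k)$, $h = 2\log(2/\epsilon)$, and $\alpha = (\rho/2)^{h-1}$, matching the hypothesis of Corollary \ref{corbip} with $r=2$ and sparsity threshold $\epsilon/8$. Applying Corollary \ref{last6} to $G$, with $\epsilon/8$ playing the role of $\epsilon$, yields that $(G,\bar G)$ is $(\alpha,\rho,\epsilon/8,2)$-sparse provided $G$ contains at most $2^{-k}(\epsilon/8)^{\binom{k}{2}}(\alpha n)^k$ induced copies of $H$. Feeding this into Corollary \ref{corbip} then produces an induced subgraph of $G$ or of $\bar G$ on $2\epsilon^{-1}2^{1-h}\rho^h n = 4\epsilon^{-1}(\rho/2)^h n$ vertices with edge density at most $\epsilon$, which after translating for $\bar G$ gives the advertised conclusion.

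The only real content of the argument is the parameter bookkeeping, and that is where I expect the main (purely computational) obstacle. Since $h = O(\log 1/\epsilon)$ and $\log(2/\rho) = O(k\log 1/\epsilon)$, one has $(\rho/2)^h n \geq 2^{-c_1 k (\log 1/\epsilon)^2} n$ for an absolute constant $c_1$, which supplies the desired lower bound on the size of the subgraph. For the hypothesis, I need to verify that the bound $2^{-c(k\log 1/\epsilon)^2}n^k$ on the number of induced copies of $H$ in the statement implies the bound $2^{-k}(\epsilon/8)^{\binom{k}{2}}\alpha^k n^k$ required by Corollary \ref{last6}. Taking logarithms, this amounts to $c(k\log 1/\epsilon)^2 \geq k + \binom{k}{2}\log(8/\epsilon) + k\log(1/\alpha)$, and using $\log(1/\alpha) = (h-1)\log(2/\rho) = O(k(\log 1/\epsilon)^2)$ together with $\binom{k}{2}\log(8/\epsilon) = O(k^2 \log 1/\epsilon) \leq O((k\log 1/\epsilon)^2)$, all three terms fit into $O((k\log 1/\epsilon)^2)$. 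Choosing $c$ large enough to dominate both $c_1$ and the implicit constants above, and noting that the boundary case $n < k^2$ is handled trivially by taking a single vertex as the subgraph (which has edge density $0$) once the claimed subgraph size drops below $2$, completes the proof. No new ideas beyond Lemma \ref{lemmaerdoshajnal4} and the iterated key lemma (Lemma \ref{l32}) are required.
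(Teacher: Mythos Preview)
Your proposal is correct and follows the same route as the paper: combine Corollary~\ref{last6} (with the threshold scaled to $\epsilon/8$ so as to match the hypothesis of Corollary~\ref{corbip}) with Corollary~\ref{corbip} for $r=2$, $h=2\log(2/\epsilon)$, $\alpha=(\rho/2)^{h-1}$, and absorb the bookkeeping into the constant $c$. One small correction: the side condition in Corollary~\ref{last6} is $\alpha \geq k^2/n$, not $n\geq k^2$, so the degenerate regime is $\alpha n<k^2$ rather than $n<k^2$; your remedy (take $c$ large enough that the asserted subgraph size $2^{-ck(\log 1/\epsilon)^2}n$ drops below $2$, and then use a single vertex) still applies verbatim there.
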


This result demonstrates that for each $\epsilon \in (0,1/2)$ and
graph $H$, there exist positive constants
$\delta^*=\delta^*(\epsilon,H)$ and $\kappa^*=\kappa^*(\epsilon,H)$
such that every graph $G=(V,E)$ on $n$ vertices with less than
$\kappa^* \,n^k$ induced copies of $H$ contains a subset $W \subset
V$ of size at least $\delta^*\,n$ such that the edge density of $W$
is at most $\epsilon$ or at least $1-\epsilon$. Furthermore, there
is a constant $c$ such that we can take $\delta^*(\epsilon,H)=2^{-ck
(\log \frac{1}{\epsilon})^2}$ and $\kappa^*(\epsilon,H)=2^{-c(k\log
\frac{1}{\epsilon})^2}$. Applying Corollary \ref{maingeneralized7}
recursively one can obtain an equitable partition of $G$ into a
small number of subsets each with low or high density.

\begin{theorem}\label{weakversion}
For each $\epsilon \in (0,1/2)$ and graph $H$ on $k$ vertices, there
are positive constants $\kappa=\kappa(\epsilon,H)$ and
$C=C(\epsilon,H)$ such that every graph $G=(V,E)$ on $n$ vertices
with less than $\kappa\, n^{k}$ induced copies of $H$, there is an
equitable partition $V=\bigcup_{i=1}^{\ell} V_i$ such that $\ell
\leq C$ and the edge density in each $V_i$  is at most $\epsilon$ or
at least $1-\epsilon$.
\end{theorem}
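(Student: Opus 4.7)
The plan is to iterate Corollary \ref{maingeneralized7} with a strictly smaller density threshold $\epsilon' < \epsilon$ to carve off equal-sized parts of density at most $\epsilon'$ or at least $1-\epsilon'$, and then to redistribute the leftover vertices uniformly into those parts; the buffer $\epsilon - \epsilon'$ is chosen large enough that this redistribution cannot push any part outside the density range $[0,\epsilon] \cup [1-\epsilon,1]$.

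Concretely, I would fix $\epsilon' = \epsilon/10$ and let $\delta^* = \delta^*(\epsilon',H)$, $\kappa^* = \kappa^*(\epsilon',H)$ be the constants supplied by Corollary \ref{maingeneralized7}. Choose $C$ to be a sufficiently large constant of order $1/(\epsilon \delta^*)$, set the target part size $m = \lfloor n/C \rfloor$, and set $\kappa = \kappa^*/(C\delta^*)^k$. Initialize $U = V$ and iterate: while $|U| \geq m/\delta^*$, apply Corollary \ref{maingeneralized7} to the induced subgraph $G[U]$. The hypothesis is met because $G[U]$ inherits at most $\kappa n^k \leq \kappa^* |U|^k$ induced copies of $H$, so the corollary produces a subset $W \subseteq U$ with $|W| \geq \delta^* |U| \geq m$ of density at most $\epsilon'$ or at least $1-\epsilon'$. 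Take an arbitrary $m$-element subset of $W$ as the next part, remove it from $U$, and continue. This procedure produces $t \geq C - 1/\delta^*$ parts of size exactly $m$, each of the required density type, leaving fewer than $m/\delta^*$ vertices in $U$.

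To equitize, distribute the leftover vertices round-robin into the $t$ existing parts, so that each part receives either $\lfloor |U|/t \rfloor$ or $\lceil |U|/t \rceil$ extra vertices; the resulting partition into $\ell = t \leq C$ parts then has sizes differing by at most one, hence is equitable. Since $|U|/t \leq m/(\delta^* C - 1)$, choosing $C$ a sufficiently large multiple of $1/(\epsilon \delta^*)$ forces the number of vertices added to any part to be at most $s \leq (\epsilon - \epsilon')m/2$. A short edge-counting estimate shows that a part of size $m$ with density at most $\epsilon'$, enlarged by $s$ vertices, has new density at most $\epsilon' + 2s/m \leq \epsilon$; the high-density case follows by applying the same bound in the complementary graph.

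The main technical point is the simultaneous choice of parameters: $\kappa$ must be small enough that the inequality $\kappa n^k \leq \kappa^* |U|^k$ persists throughout the extraction, $C$ must be large enough that the perturbation $|U|/t$ fits inside the buffer $\epsilon - \epsilon'$, and $\epsilon'$ must be a definite fraction of $\epsilon$ so that this buffer scales linearly with $\epsilon$. Once these constraints are balanced, each iteration of the argument requires only a single application of Corollary \ref{maingeneralized7}, with no recursion and no use of the regularity lemma.
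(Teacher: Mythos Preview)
Your approach is essentially the paper's own: iterate Corollary \ref{maingeneralized7} with a smaller threshold $\epsilon'$ (the paper uses $\epsilon/4$, you use $\epsilon/10$) to extract equal-sized parts, stop when the leftover is a small fraction of $n$, and redistribute the leftover evenly among the parts using the buffer $\epsilon-\epsilon'$ to absorb the density perturbation. The parameter choices and the final edge-count are the same up to constants.

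There is, however, one step that fails as written. After Corollary \ref{maingeneralized7} hands you a set $W\subseteq U$ with $|W|\geq \delta^*|U|\geq m$ and density at most $\epsilon'$ (say), you propose to ``take an arbitrary $m$-element subset of $W$ as the next part.'' An arbitrary subset need not inherit the density bound --- a sparse set can contain a clique. The repair is the averaging observation stated just before Lemma \ref{useful}: since the density of $W$ equals the average density over all its $m$-element subsets, \emph{some} $m$-element subset of $W$ has density at most $\epsilon'$ (and symmetrically in the dense case). The paper uses exactly this observation, simply asserting that one can take $W_j$ of the prescribed exact size with the required density. With ``arbitrary'' replaced by ``some, chosen by averaging,'' your argument goes through; the remaining discrepancies (e.g.\ the claim $t\leq C$ given $m=\lfloor n/C\rfloor$) are minor bookkeeping that is easily absorbed by enlarging $C$ by a constant factor.
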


\noindent This extension of R\"odl's theorem was proved by Nikiforov
\cite{Ni} using the regularity lemma and therefore it had quite poor
(tower like) dependence of $\kappa$ and $C$ on $\epsilon$ and $k$.
Obtaining a proof without using the regularity lemma was the main
open problem raised in \cite{Ni} .

\vspace{0.2cm}
\noindent
{\bf Proof of Theorem \ref{weakversion}.}\,
Let
$\kappa(\epsilon,H)=(\frac{\epsilon}{4})^k\kappa^*(\frac{\epsilon}{4},H)$
and $C(\epsilon,H)=4/(\epsilon \delta^*(\frac{\epsilon}{4},H))$, where
$\kappa^*$ and $\delta^*$ were defined above.
Take a subset $W_1 \subset V$ of size $
\delta^*(\frac{\epsilon}{4},H)\frac{\epsilon}{4}n$ whose edge
density is at most $\frac{\epsilon}{4}$ or at least
$1-\frac{\epsilon}{4}$, and set $U_1 =V\setminus W_1$. For $j \geq
1$, if $|U_j| \geq \frac{\epsilon}{4} n$, then by definition of $\kappa$ we have that the number of induced copies of
$H$ in $U_j$ is at most (the number of such copies in $G$) $\kappa\,n^k=
(\frac{\epsilon}{4})^k\kappa^*\,n^k \leq \kappa^*|U_j|^k$.
Therefore by definition of $\kappa^*$ and $\delta^*$ we can
find a subset $W_{j+1} \subset U_j$ of size
$\delta^*\frac{\epsilon}{4}n \leq \delta^*|U_j|$ whose edge
density is at most $\frac{\epsilon}{4}$ or at least
$1-\frac{\epsilon}{4}$, and set $U_{j+1}=U_j \setminus W_{j+1}$.

Once this process stops, we have disjoint sets
$W_1,\ldots,W_{\ell}$, each with the same cardinality, and a subset
$U_{\ell}$ of cardinality at most $\frac{\epsilon}{4}n$. The number
$\ell$ is at most $$n/|W_1| \leq 4/(\epsilon
\delta^*(\frac{\epsilon}{4},H)).$$

Partition set $U_\ell$ into $\ell$ equal parts $T_1, \ldots, T_\ell$
and let $V_j=W_j \cup T_j$ for $1 \leq j \leq \ell$.
Notice that $V=V_1 \cup \ldots \cup V_{\ell}$ is
an equitable partition of $V$. By definition,
$|T_j|=|U_\ell|/\ell \leq \frac{\epsilon}{4}n/\ell$.
On the other hand $|W_j|=(n-|U_\ell|)/\ell \geq (1-\epsilon/4)n/\ell$.
Since $1-\epsilon/4 >7/8$, this implies that
$$|T_j|\leq \frac{\epsilon}{4}n/\ell \leq \frac{\epsilon}{4}\big(1-\epsilon/4\big)^{-1}|W_j| \leq \frac{2\epsilon}{7}|W_j|.$$
We next look at the edge density in $V_j$. If the edge density in $W_j$ is at most $\epsilon/4$, then
using the above bound on $|T_j|$, it is easy to check that the number of edges in $V_j$ is at most
$${|T_j| \choose 2}+|T_j||W_j|+\frac{\epsilon}{4}{|W_j| \choose 2}
\leq \epsilon{|W_j| \choose 2} \leq \epsilon{|V_j| \choose 2}.$$
Hence, the edge density in each such $V_j$ is at most $\epsilon$.
Similarly, if the edge density in $W_j$ is at least
$1-\frac{\epsilon}{4}$, then the edge density in $V_j$ is at least
$1-\epsilon$. This completes the proof.
\hfill $\Box$

\vspace{0.2cm}
We next use Lemma \ref{lemmaerdoshajnal4} to prove that there is a constant $c>0$ such that
every graph $G$ on $n$ vertices which contains at most
$(1-\epsilon)2^{-{k \choose 2}}n^k$ labeled
induced copies of some fixed $k$-vertex graph $H$ has a subset
$S$ of size $|S|=\lfloor n/2 \rfloor$ with $|e(S)-\frac{n^2}{16}| \geq \epsilon
c^{-k} n^2$.

\vspace{0.25cm}
\noindent
{\bf Proof of Theorem \ref{dev}.}\, For $1 \leq i \leq k-1$, let
$\epsilon_i=\frac{1}{2}(1-2^{i-k-2}\epsilon)$ and
$\delta_i=2^{i-k-2}\epsilon$. Notice that for all $i \leq k-1$
\begin{eqnarray}
\label{eq3}
\prod_{j<i}\epsilon_j=2^{-i+1}\prod_{j<i}\big(1-2^{j-k-2}\epsilon\big)\geq
2^{-i+1}\bigg(1-\epsilon \sum_{j<k-1}2^{j-k-2}\bigg) \geq
2^{-i+1}(1-\epsilon/8)>2^{-i}
\end{eqnarray}
and also that
\begin{eqnarray*}
\prod_{i=1}^{k-1}(1-\delta_i)\epsilon_i^{k-i}&=&
2^{-{k \choose 2}}\prod_{i=1}^{k-1}\big(1-2^{i-k-2}\epsilon \big)^{k-i+1}=
2^{-{k \choose
2}}\prod_{j=2}^{k} \big(1-\epsilon2^{-j-1}\big)^{j}\\ &\geq&
2^{-{k \choose 2}}\bigg(1-\epsilon\sum_{j=2}^k \frac{j}{2^{j+1}}\bigg)
>
\left(1-\frac{\epsilon}{2}\right)2^{-{k \choose 2}}.
\end{eqnarray*}
We may assume that $\epsilon \geq k^2/n$ since otherwise
by choosing constant $c$ large enough we get that $\epsilon c^{-k}n^2<1$ and the conclusion of the
theorem follows easily. Therefore
$$ \left(1-\frac{k^2}{2n}\right)\prod_{i=1}^{k-1} (1-\delta_i)\epsilon_i^{k-i} \geq
\left(1-\frac{\epsilon}{2}\right)^22^{-{k \choose 2}} >
(1-\epsilon)2^{-{k \choose 2}},$$ and we can apply Lemma
\ref{lemmaerdoshajnal4} with $\epsilon_i$ and $\delta_i$ as above to
our graph $G$ since it contains at most $(1-\epsilon)2^{-{k \choose
2}}n^k$ labeled induced copies of $H$.  This lemma, together with
(\ref{eq3}), implies that there is an index $i \leq k-1$ and
disjoint subsets $A$ and $B$ with
$$|A|\geq \frac{\delta_in}{k(k-i)}\prod_{j<i} \epsilon_j \geq k^{-2}2^{-k-2}n,$$ $$|B|
\geq \frac{n}{k}\prod_{j < i} \epsilon_j \geq 2^{-i}k^{-1}n,$$ and
every element of $A$ is adjacent to at most $\epsilon_i|B|$ elements
of $B$ or every element of $A$ is adjacent to at least
$(1-\epsilon_i)|B|$ elements of $B$. In either case, we have
$$\left|e(A,B)-\frac{1}{2}|A||B|\right| \geq
\left(\frac{1}{2}-\epsilon_i\right)|A||B|=2^{i-k-3}\epsilon|A||B| \geq k^{-3}2^{-2k-5}\epsilon n^2.$$
Note that
$$e(A,B)-\frac{1}{2}|A||B|=\left(e(A \cup B)-\frac{1}{2}{|A \cup B|
\choose 2}\right)-\left(e(A)-\frac{1}{2}{|A| \choose
2}\right)-\left(e(B)-\frac{1}{2}{|B| \choose 2}\right).$$ It follows
from the triangle inequality that there is some subset of vertices
$R \in \{A,B, A \cup B\}$ such that
\begin{equation}
\label{subset}
\left|e(R)-\frac{1}{2}{|R| \choose 2}\right| \geq
\frac{1}{3} k^{-3}2^{-2k-5}\epsilon n^2,
\end{equation}
i.e.,  it deviates by at least $\epsilon k^{-3}2^{-2k-5}n^2/3$ edges from having edge density $1/2$.
To finish the proof we will use the lemma of Erd\H{o}s et al. \cite{ErGoPaSp}, mentioned in the introduction.
This lemma says that if graph $G$ on $n$ vertices with edge density $\eta$ has a subset that
deviates by $D$ edges from having edge density $\eta$, then it also
has a subset of size $n/2$ that deviates by at least $D/5$ edges from having edge density $\eta$.
Note that if the edge density of our graph $G$ is either larger than
$1/2+\epsilon k^{-3}2^{-2k-5}n^2/30$ or smaller than  $1/2-\epsilon k^{-3}2^{-2k-5}n^2/30$
than by averaging over all subsets of size $n/2$ we will find subset $S$ satisfying our assertion.
Otherwise, if the edge density $\eta$ of $G$ satisfies $|\eta-1/2| \leq \epsilon k^{-3}2^{-2k-5}n^2/30$, then
the subset $R$ from (\ref{subset}) deviates by at least
$\epsilon k^{-3}2^{-2k-5}n^2/3- \epsilon k^{-3}2^{-2k-5}n^2/30 \geq \epsilon k^{-3}2^{-2k}n^2/4$ edges from
having edge density $\eta$. Then, by the lemma of Erd\H{o}s et al., $G$ has a subset $S$ of cardinality $n/2$
that deviates by at least $\epsilon k^{-3}2^{-2k-5}n^2/20$ edges from having edge density $\eta$.
This $S$ satisfies
$$\left|e(S)-\frac{1}{4}|S|^2\right| \geq \epsilon k^{-3}2^{-2k-5}n^2/20-\epsilon k^{-3}2^{-2k-5}n^2/30=
\Omega\left(\epsilon
k^{-3}2^{-2k}n^2\right),$$  completing the proof. \qed

\vspace{0.1cm}
For positive integers $k$ and $n$, recall that $D(k,n)$ denotes the largest
integer such that every graph $G$ on $n$ vertices that is $H$-free for some $k$-vertex graph
$H$ contains a subset $S$ of size $n/2$
with $|e(S)-\frac{1}{16}n^2|>D(k,n)$.
We end this section by proving the upper bound on $D(k,n)$.

\begin{proposition}
There is a constant $c>0$ such that for all positive integers $k$ and
$n \geq 2^{k/2}$, there is a $K_k$-free graph $G$ on $n$ vertices such that for
every subset $S$ of $n/2$ vertices of $G$,
$$\Big|e(S)-\frac{1}{16}n^2\Big|<c2^{-k/4}n^2.$$
\end{proposition}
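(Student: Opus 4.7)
My plan is to build $G$ as a balanced blow-up of a small $K_k$-free pseudo-random base graph. First I would set $m=\lfloor 2^{(k-1)/2}\rfloor$ and take $G_0$ to be an instance of the random graph $G(m,1/2)$. Since $\binom{m}{k}2^{-\binom{k}{2}}<1$, Erd\H{o}s' classical argument shows that $G_0$ is $K_k$-free with positive probability, and a standard Chernoff plus union bound argument gives that with probability tending to $1$, $G_0$ is $(1/2,\lambda_0)$-pseudo-random with $\lambda_0=C\sqrt{m}$ for some absolute constant $C$; both properties therefore hold simultaneously for some concrete $G_0$.

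Given $n\geq 2^{k/2}\geq m$, I would then form $G$ by blowing up $G_0$: partition the $n$ vertices into cells $\{I_v\}_{v\in V(G_0)}$ of sizes in $\{\lfloor n/m\rfloor,\lceil n/m\rceil\}$, with a complete bipartite graph between $I_u$ and $I_v$ exactly when $u\sim v$ in $G_0$ (and no edges inside any cell). Since every cell is an independent set, any clique of $G$ uses at most one vertex from each cell and so projects to a clique of $G_0$; hence $G$ is $K_k$-free.

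For the edge discrepancy, fix any $S\subset V(G)$ with $|S|=n/2$ and set $a_v=|S\cap I_v|$, so $\sum_v a_v=n/2$ and $0\leq a_v\leq s:=\lceil n/m\rceil$. Writing $a_v=\sum_{t=1}^{s}\mathbf{1}_{v\in A_t}$ with level sets $A_t=\{v:a_v\geq t\}$ converts the desired count into
\[
e(S)=\sum_{\{u,v\}:\,u\sim_{G_0}v}a_u a_v=\tfrac12\sum_{t_1,t_2=1}^{s}e_{G_0}(A_{t_1},A_{t_2}).
\]
Applying the pseudo-random estimate $e_{G_0}(A_{t_1},A_{t_2})=\tfrac12|A_{t_1}||A_{t_2}|+O\!\bigl(\lambda_0\sqrt{|A_{t_1}||A_{t_2}|}\bigr)$ termwise and invoking Cauchy--Schwarz $\bigl(\sum_t\sqrt{|A_t|}\bigr)^{2}\leq s\sum_t|A_t|=sn/2$ to control the error gives
\[
e(S)=\tfrac14\Bigl(\sum_v a_v\Bigr)^{2}+O(\lambda_0 s n)=\tfrac{n^2}{16}+O\!\bigl(n^{2}/\sqrt{m}\bigr)=\tfrac{n^2}{16}+O\!\bigl(2^{-k/4}n^{2}\bigr),
\]
which is the required estimate. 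The main technical step is this weighted expander-mixing calculation; since $G(m,1/2)$ need not be regular, I rely on the level-set decomposition above, which uses only the discrepancy definition of pseudo-randomness. Alternatively one could start with a random $\lfloor(m-1)/2\rfloor$-regular graph and invoke the spectral form of the mixing lemma directly.
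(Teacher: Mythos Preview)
Your construction is the same as the paper's: take a random graph $\Gamma$ on $\ell\approx 2^{k/2}$ vertices (which is $K_k$-free and has edge discrepancy $O(\ell^{3/2})$ with high probability) and blow it up into equal independent sets. The difference is only in how the weighted discrepancy of the blow-up is controlled. The paper records the one-set property $|e(X)-|X|^2/4|\le 2\ell^{3/2}$ for every $X\subset V(\Gamma)$, writes $e(S)=(n/\ell)^2\sum_{uv\in E(\Gamma)}a_u a_v$ with $a_u\in[0,1]$, and then \emph{derandomizes} the fractional weights: pick a random $Y\subset V(\Gamma)$ including each $u$ independently with probability $a_u$, compute $\mathbb{E}[e(Y)-|Y|^2/4]$, and extract a concrete subset violating the discrepancy bound if $e(S)$ were too far from $n^2/16$. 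Your level-set decomposition with Cauchy--Schwarz is an equally valid alternative and arguably more direct; the trade-off is that your sets $A_{t_1},A_{t_2}$ are nested rather than disjoint, so the bilinear pseudo-random estimate acquires a diagonal correction of order $|A_{t_1}\cap A_{t_2}|$. Summed over all pairs this contributes $\tfrac12\sum_v a_v^2\le \tfrac12 s\cdot n/2=O(n^2/m)$, which is lower order than your main error $O(n^2/\sqrt{m})$, so the argument goes through once this is noted.
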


\begin{proof}
Consider the random graph $G(\ell,1/2)$ with $\ell=2^{ k /2}$. For
every subset of vertices $X$ in this graph the number of edges in
$X$ is a binomially distributed random variable with expectation
$\frac{|X|(|X|-1)}{4}$. Therefore by Chernoff's bound (see, e.g.,
Appendix A in \cite{AlSp}), the probability that it deviates from
this value by $t$ is at most $2e^{-t^2/|X|^2}$. Thus choosing
$t=1.5\ell^{3/2}$ we obtain that the probability that there is a
subset of vertices $X$ such that
$\big|e(X)-\frac{|X|(|X|-1)}{4}\big|>t$ is at most $2^\ell \cdot
2e^{-t^2/\ell^2} \ll 1$. This implies that there is graph $\Gamma$
on $\ell$ vertices such that every subset $X$ of $\Gamma$ satisfies
\begin{equation}
\label{random}
\Big|e(X)-\frac{1}{4}|X|^2\Big| \leq 2\ell^{3/2}.
\end{equation}

Let $G$ be the graph obtained by replacing every
vertex $u$ of $\Gamma$ with an independent set $I_u$, of size $n/\ell$,
and by replacing every edge $(u,v)$ of $\Gamma$ with a complete bipartite graph, whose
partition classes are independent sets $I_u$ and $I_v$.
Clearly, since $\Gamma$ does not contain $K_k$, then neither does $G$.
We claim that graph $G$ satisfies the assertion of the proposition.
Suppose for contradiction that there is a subset $S$ of  $n/2$ vertices of $G$ satisfying
$$e(S)-\frac{1}{16}n^2 >4\ell^{3/2}(n/\ell)^2=4\ell^{-1/2}n^2=2^{-k/4+2}n^2,$$
(the other case when $e(S)-n^2/16<-4\ell^{-1/2}n^2$ can be treated similarly).
For every vertex $u \in \Gamma$ let the size of $S \cap I_u$ be $a_un/\ell$.
By definition, $0 \leq a_u \leq 1$ and since $S$ has size $n/2$ we have that $\sum_u a_u=\ell/2$.
We also have that
$$e(S)=\sum_{(u,v)\in E(\Gamma)} a_u a_v \cdot (n/\ell)^2>\frac{1}{16}n^2+4\ell^{3/2}(n/\ell)^2,$$
and therefore
$$\sum_{(u,v)\in E(\Gamma)} a_u a_v >\ell^2/16+4\ell^{3/2}=\frac{1}{4}\Big(\sum_u a_u\Big)^2 +4\ell^{3/2}.$$
Consider a random  subset $Y$ of $\Gamma$ obtained by choosing every
vertex $u$ randomly and independently with probability $a_u$. Since
all choices were independent we have that
$$\mathbb{E}\big[|Y|^2\big]= \sum_u a_u +\sum_{u\not =v}a_ua_v \leq
\big(\sum_u a_u\big)^2+\ell/2.$$
We also have that the
expected number of edges spanned by $Y$ is
$\mathbb{E}\big[e(Y)\big]=\sum_{(u,v)\in E(\Gamma)} a_u a_v$. Then, by the above discussion,
$\mathbb{E}\big[e(Y)-|Y|^2/4\big] >3\ell^{3/2}$. In
particular, there is subset $Y$ of $\Gamma$ with this property,
 which contradicts
(\ref{random}). This shows that every subset  $S$ of  $n/2$
vertices of $G$ satisfies
$$\Big|e(S)-\frac{1}{16}n^2\Big| \leq 2^{-k/4+2}n^2$$
and completes the proof.
\end{proof}

\section{Induced Ramsey Numbers and Pseudorandom Graphs} \label{moreoninduced}
The main result in this section is Theorem \ref{quasirandominduced},
which shows that any sufficiently pseudo-random graph of appropriate
density has strong induced Ramsey properties. It generalizes Theorem
\ref{quasicor1} and Corollary \ref{payley} from the introduction.
Combined with known examples of pseudo-random graphs, this theorem
gives various explicit constructions which match and improve the
best known estimates for induced Ramsey numbers.

The idea of the proof of Theorem \ref{quasirandominduced} is rather
simple. We have a sufficiently large, pseudo-random graph $G$ that
is not too sparse or dense. We also have $d$-degenerate graphs $H_1$
and $H_2$ each with vertex set $[k]$ and chromatic number at most
$q$. We suppose for contradiction that
there is a red-blue edge-coloring of $G$ without an induced red
copy of $H_1$ and without an induced blue copy
of $H_2$. We may view the red-blue coloring of $G$ as a
red-blue-green edge-coloring of the complete graph $K_{|G|}$, in
which the edges of $G$ have their original color, and the edges of
the complement $\bar G$ are colored green. The fact that in $G$
there is no induced red copy of $H_1$ means that the red-blue-green
coloring of $K_{|G|}$ does not contain a particular red-green
coloring of the the complete graph $K_k$. Then we prove, similar to
Lemma \ref{lemmaerdoshajnal} of Erd\H{o}s and Hajnal,  that any
large subset of vertices of $G$ contains two large disjoint subsets
for which the edge density in color red between them is small. By
using the key lemma from Section 3, we find $k$ large disjoint
vertex subsets $V_1,\ldots,V_k$ of $G$ for which the edge density in
color red is small between any pair $(V_i,V_j)$ for which $(i,j)$ an
edge of $H_2$.

Next we try to find an induced blue copy of $H_2$ with vertex $i$ in
$V_i$ for all $i \in [k]$. Since the edge density between $V_i$ and
$V_j$ in color red is sufficiently small for every edge $(i,j)$ of
$H_2$, we can build an induced blue copy of $H_2$ one vertex at a
time. At each step of this process we use pseudo-randomness of $G$
to make sure that the existing possible subsets for not yet embedded
vertices of $H_2$ are sufficiently large and that the density of red
edges does not increase a lot between any pair of subsets
corresponding to adjacent vertices of $H_2$. This last part of the
proof, embedding an induced blue copy of $H_2$, is the most
technically involved and handled by Lemma \ref{densitylemma}.

Recall that $[i]=\{1, \ldots, i\}$ and that a graph is $d$-degenerate if every subgraph has a
vertex of degree at most $d$. For an edge-coloring $\Psi:E(K_{k})\rightarrow [r]$, we say that
another edge-coloring $\Phi:E(K_{n})\rightarrow [s]$ is {\it
$\Psi$-free} if, for every subset $W$ of size $k$ of the complete graph $K_n$,
the restriction of $\Phi$ to $W$ is not isomorphic to
$\Psi$. In the following lemma, we have a coloring $\Psi$ of the
edges of the complete graph $K_k$ with colors $1$ and $2$ such that
the graph of color $2$ is $d$-degenerate. We also have a $\Psi$-free coloring
$\Phi$ of the edges of the complete graph $K_n$ such that between any two large subsets of vertices
there are sufficiently many edges of color 1. With these
assumptions, we show that there are two large subsets of $K_n$
which in coloring $\Phi$ have few edges of color $2$ between them. A graph $G$ is {\it
bi-$(\epsilon,\delta)$-dense} if $d(A,B)>\epsilon$ holds for all
disjoint subsets $A,B \subset V(G)$ with $|A|,|B| \geq \delta
|V(G)|$.

\begin{lemma}\label{lemmaerdoshajnal2}
Let $d$ and $k$ be positive integers and $\Psi:E(K_k) \rightarrow
[2]$ be a $2$-coloring of the edges of $K_k$ such that the graph of
color $2$ is $d$-degenerate. Suppose that $q,\epsilon \in (0,1)$ and
$\Phi:E(K_n) \rightarrow [s]$ is a $\Psi$-free edge-coloring such
that the graph of color $1$ is
bi-$(q,\epsilon^dq^{k}k^{-2})$-dense. Then there are disjoint
subsets $A$ and $B$ of $K_n$ with $|A|,|B| \geq
\epsilon^{d}q^{k} k^{-2}n$ such that every vertex of $A$ is connected to at most
$\epsilon|B|$ vertices in $B$ by edges of color 2.
\end{lemma}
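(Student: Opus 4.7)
The plan is a proof by contradiction modeled on Erd\H{o}s--Hajnal's proof of Lemma \ref{lemmaerdoshajnal}. Assume no such subsets $A,B$ exist, and construct an induced copy of $\Psi$ inside $\Phi$, contradicting $\Psi$-freeness.

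First I would fix a degeneracy ordering $v_1,\ldots,v_k$ of the vertex set of $\Psi$ with respect to its color-$2$ subgraph, so that each $v_i$ has at most $d$ color-$2$ neighbors among $\{v_1,\ldots,v_{i-1}\}$. This ordering ensures that any candidate set for $v_j$ built from compatibility constraints to previously embedded vertices gets restricted by color-$2$ conditions at most $d$ times. Set $\delta:=\epsilon^d q^k k^{-2}$. The failure hypothesis then translates into the statement that for any subset $B\subseteq V(K_n)$ of size at least $\delta n$, fewer than $\delta n$ vertices of $V\setminus B$ have at most $\epsilon|B|$ color-$2$ neighbors in $B$; otherwise the set of such vertices, together with $B$, would be the forbidden pair.

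I would then try to greedily embed $\Psi$ in $\Phi$ by picking $u_1,u_2,\ldots,u_k$ in turn, maintaining a candidate set $T_j\subseteq V(K_n)$ for each not-yet-embedded vertex $v_j$, namely all vertices $u$ with $(u_i,u)$ of color $\Psi(v_i,v_j)$ for every $i$ already processed. After choosing $u_i$ we update $T_j\leftarrow T_j\cap N_{\Psi(v_i,v_j)}(u_i)$ for all $j>i$, where $N_c(\cdot)$ denotes the color-$c$ neighborhood in $\Phi$. The heart of the argument is a careful choice of $u_i$ that keeps all future $T_j$ sufficiently large. For a future $j$ with $\Psi(v_i,v_j)=1$, the bi-$(q,\delta)$-density of the color-$1$ graph forces that most vertices in $T_i$ have many color-$1$ neighbors in $T_j$, so removing the bad ones shrinks $T_i$ by at most a controlled multiplicative factor (with threshold chosen close to $q$). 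For a future $j$ with $\Psi(v_i,v_j)=2$, the failure hypothesis directly gives that fewer than $\delta n$ vertices of $T_i$ have at most $\epsilon|T_j|$ color-$2$ neighbors in $T_j$.

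Iteratively pruning $T_i$ constraint by constraint (multiplicative loss for color-$1$, subtraction of at most $\delta n$ for color-$2$), I would show that the set of surviving vertices is nonempty at every step, so that one can always pick $u_i$. Tracking sizes across all $k$ steps, each $T_j$ sees at most $k-1$ color-$1$ restrictions and, by the degeneracy ordering, at most $d$ color-$2$ restrictions, leaving $|T_j|$ at least a constant times $q^{k-1}\epsilon^d n$, which comfortably exceeds $\delta n=\epsilon^d q^k k^{-2}n$ for the appropriate choice of thresholds. Thus the greedy embedding succeeds, yielding an induced copy of $\Psi$ in $\Phi$ --- the desired contradiction. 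The main obstacle will be calibrating the color-$1$ shrinkage threshold: too aggressive and iterated multiplicative loss at a single step kills $T_i$ before all its constraints are processed; too conservative and the future $T_j$'s fall below the bi-density threshold. Balancing these tensions is exactly what produces the factor $k^{-2}$ in $\delta$: one $k^{-1}$ absorbs the union bound over the color-$2$ constraints at a single step, and the other accounts for the slack in the color-$1$ threshold required to survive up to $k-1$ iterated multiplicative restrictions.
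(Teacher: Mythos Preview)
Your plan is essentially the paper's: order the vertices of $K_k$ by a degeneracy ordering of the color-$2$ graph and greedily embed $\Psi$, using bi-density to control color-$1$ constraints and the negated conclusion to control color-$2$ constraints. But there is a genuine gap in the execution. You initialize every $T_j$ as all of $V(K_n)$, so the candidate sets need not be disjoint --- indeed, if $v_j$ and $v_{j'}$ have the same $\Psi$-colors to each of $v_1,\ldots,v_{i}$, then $T_j = T_{j'}$ after step $i$. Both of your tools require disjointness: bi-$(q,\delta)$-density only bounds $d(A,B)$ for \emph{disjoint} $A,B$, and your failure hypothesis is the negation of a statement about disjoint $A,B$. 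So when you try to bound the bad vertices in $T_i$ relative to $T_j$, neither applies as stated (and you also never ensure that $u_1,\ldots,u_k$ are distinct). The paper's proof fixes this in one line: partition $V(K_n)$ into $k$ equal parts $V_1,\ldots,V_k$ at the outset and take the initial candidate set for $v_j$ to be $V_j$. This costs a factor of $k^{-1}$ in every size bound --- which is where the first $k^{-1}$ in $\delta = \epsilon^d q^k k^{-2}$ actually originates --- and makes the candidate sets disjoint for free. The second $k^{-1}$ then comes from a single pigeonhole at each step (if no vertex in the current candidate set is good for all $k{-}i{-}1$ future constraints, at least a $1/(k{-}i{-}1)$-fraction fail the same one, and that subset is the desired $A$), not from slack in a color-$1$ threshold.

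A smaller point: your ``multiplicative loss for color-$1$, subtraction for color-$2$'' when pruning $T_i$ is muddled. Bi-density gives the same additive bound as the failure hypothesis --- fewer than $\delta n$ vertices (once disjointness is arranged) have fewer than $q|T_j|$ color-$1$ neighbors in $T_j$ --- so both constraint types are handled additively at this stage. The multiplicative factors $q$ and $\epsilon$ appear only when tracking how the future $T_j$ shrinks after $u_i$ is chosen, which is the part of your bookkeeping that is correct.
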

\begin{proof}
Note that from definition, the vertices of every $d$-degenerate
graph can be labeled $1,2, \ldots$ such that for every vertex $\ell$
the number of vertices $j<\ell$ adjacent to it is at most $d$.
(Indeed, remove from the graph a vertex of minimum degree, place it
in the end of the list and repeat this process in the remaining
subgraph.) Therefore we may assume that the labeling $1, \ldots, k$
of vertices of $K_k$ has the property that for every $\ell \in [k]$
there are at most $d$ vertices $j <\ell$ such that the color
$\Psi(j,\ell)=2$. Partition the vertices of $K_n$ into sets $V_1
\cup \ldots \cup V_{k}$ each of size $\frac{n}{k}$. For $w \in V_i$
and a subset $S \subset V_j$ with $j \not = i$, let $N(w,S)=\{s \in
S ~|~ \Phi(w,s)=\Psi(i,j)\}.$ For $i<\ell $, let $D(\ell,i)$ denote
the number of vertices $j \leq i$ such that the color
$\Psi(j,\ell)=2$. By the above assumption, $D(\ell,i) \leq d$ for $1
\leq i < \ell \leq k$.

We will try iteratively to build a copy of $K_{k}$ with coloring
$\Psi$. After $i$ steps, we either find two disjoint subsets of vertices $A, B$ which satisfy the assertion of the lemma or we
will have vertices $v_1,\ldots,v_{i}$ and
subsets $V_{i+1,i},V_{i+2,i},\ldots,V_{k,i}$ such that
\begin{enumerate}
\item $V_{\ell,i}$ is a subset of $V_\ell$ of size $|V_{\ell,i}| \geq \epsilon^{D(\ell,i)}q^{i-D(\ell,i)}|V_{\ell}|$ for all $i+1 \leq
\ell \leq k$,
\item $\Phi(v_j,v_{\ell})=\Psi(j,\ell)$ for $1 \leq j<\ell \leq i$,
\item and if $j \leq i< \ell$ and $w \in V_{\ell,i}$, then
$\Phi(v_j,w)=\Psi(j,\ell)$.
\end{enumerate}

In the first step, we call a vertex $w\in V_1$ {\it good} if
$|N(w,V_j)| \geq \epsilon|V_j|$ for all $j>1$ with $\Psi(1,j) =2$
and $|N(w,V_j)| \geq q|V_i|$ for all $j>1$ with $\Psi(1,j)=1$. If
there is no good vertex in $V_1$, then there is a subset $A \subset
V_1$ with $|A|\geq \frac{1}{k-1}|V_1|$ and index $j >1$ such that
either $\Psi(1,j)=1$ and every vertex $w \in A$ has fewer than
$q|V_j|$ edges of color $1$ to $V_j$ or $\Psi(1,j)=2$ and
every vertex $w \in A$ is connected to less than $\epsilon|V_j|$
vertices in $V_j$ by edges of color 2. Letting $B=V_j$, we conclude
that the first case is impossible since the graph of color $1$ is
bi-$(q,\epsilon^dq^{k}k^{-2})$-dense, while in the second case we
would be done, since $A$ and $B$ would satisfy the assertion of the
lemma. Therefore, we may assume that there is a good vertex $v_1 \in
V_1$, and we define $V_{i,1}=N(v_1,V_i)$ for $i>1$.

Suppose that after step $i$ the properties 1-3 are still satisfied.
Then, in step $i+1$, a vertex $w \in V_{i+1,i}$ is called {\it
good} if $|N(w,V_{j,i})| \geq \epsilon|V_{j,i}|$ for each $j> i+1$
with $\Psi(i+1,j) =2$ and $|N(w,V_{j,i})| \geq q|V_{j,i}|$ for each
$j >i+1$ with $\Psi(i+1,j) =1$. If there is no good vertex in
$V_{i+1,i}$, then there is a subset $A \subset V_{i+1,i}$ with
$|A|\geq \frac{1}{k-i-1}|V_{i+1,i}|$ and $j >i+1$ such that either
$\Psi(i+1,j)=1$ and every vertex $w \in A$ has fewer than
$q|V_{j,i}|$ edges of color $1$ to $V_{j,i}$ or
$\Psi(1,j)=2$ and every vertex $w \in A$ is connected to less than
 $\epsilon|V_{j,i}|$ vertices in $V_{j,i}$ by edges of color
2. Note that even in the last step when $i+1=k$ the size of $A$ is
still at least $|V_{k,k-1}|/k\geq \epsilon^dq^k|V_k|/k\geq
\epsilon^dq^kk^{-2}n$. Therefore, letting $B=V_{j,i}$, we conclude
that as before the first case is impossible since the graph of color
$1$ is bi-$(q,\epsilon^dq^{k}k^{-2})$-dense, while the second case
would complete the proof,  since $A$ and $B$ would satisfy the
assertion of the lemma. Hence, we may assume that there is a good
vertex $v_{i+1} \in V_{i+1,i}$, and we define
$V_{j,i+1}=N(v_{i+1},V_{j,i})$ for $j >i+1$.
Note that $|V_{j,i+1}| \geq q|V_{j,i}|$ if $\Psi(i+1,j) =1$
and $|V_{j,i+1}| \geq \epsilon |V_{j,i}|$ if $\Psi(i+1,j) =2$.
This implies that after step $i+1$ we have
that $|V_{\ell,i+1}| \geq
\epsilon^{D(\ell,i+1)}q^{i+1-D(\ell,i+1)}|V_{\ell}|$ for all $i+2
\leq \ell \leq k$.

The iterative process must stop at one of the steps $j \leq k-1$,
since otherwise the coloring $\Phi$ would not be $\Psi$-free.
As we already explained above, when this happens we have two disjoint subsets $A$ and $B$
that satisfy the assertion of the lemma.
\end{proof}

Notice that if coloring $\Phi:K_n \rightarrow [s]$ is $\Psi$-free,
then so is $\Phi$ restricted to any subset of $K_n$ of size $\alpha n$. Therefore,
Lemma \ref{lemmaerdoshajnal2} has the following corollary.

\begin{corollary}\label{last}
Let $d$ and $k$ be positive integers and $\Psi:E(K_k) \rightarrow
[2]$ be a $2$-coloring of the edges of $K_k$ such that the graph of
color $2$ is $d$-degenerate. If $q,\alpha,\epsilon \in (0,1)$ and
$\Phi:E(K_n) \rightarrow [s]$ is a $\Psi$-free edge-coloring such
that the graph of color $1$ is bi-$(q,\alpha\rho)$-dense with
$\rho=\epsilon^{d}q^{k} k^{-2}$, then the graph of color $2$ is
$(\alpha,\rho,\epsilon,2)$-sparse.
\end{corollary}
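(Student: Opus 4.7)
The plan is a direct reduction to Lemma \ref{lemmaerdoshajnal2}. Fix an arbitrary subset $U \subset V(K_n)$ with $|U| \geq \alpha n$, and let $\Phi|_U$ denote the restriction of $\Phi$ to the edges spanned by $U$. Since being $\Psi$-free is inherited by any induced subgraph, $\Phi|_U$ is a $\Psi$-free edge-coloring of $K_{|U|}$, so the first hypothesis of Lemma \ref{lemmaerdoshajnal2} is preserved after restriction.

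Next I would verify that on the ground set $U$ the color-$1$ graph is bi-$(q,\rho)$-dense, where $\rho=\epsilon^{d}q^{k}k^{-2}$. Indeed, any disjoint $A,B \subset U$ with $|A|,|B| \geq \rho|U|$ automatically satisfy $|A|,|B| \geq \rho\alpha n = \alpha\rho|V|$, so the hypothesis that the color-$1$ graph is bi-$(q,\alpha\rho)$-dense on $V$ gives $d(A,B) > q$, as required.

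With both hypotheses of Lemma \ref{lemmaerdoshajnal2} in place, applying it to $\Phi|_U$ yields disjoint subsets $A,B\subset U$ with $|A|,|B| \geq \epsilon^{d}q^{k}k^{-2}|U|=\rho|U|$ such that every vertex of $A$ has at most $\epsilon|B|$ color-$2$ neighbors in $B$. Summing over $A$, the number of color-$2$ edges between $A$ and $B$ is at most $\epsilon|A||B|$, so $d_{G_2}(A,B) \leq \epsilon$, where $G_2$ denotes the color-$2$ graph.

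The only minor technicality is that Definition \ref{d31} requires subsets of size exactly $\lceil \rho|U|\rceil$, whereas the lemma only guarantees size at least $\rho|U|$. This is handled by averaging: if $A' \subset A$ and $B' \subset B$ are chosen uniformly at random with $|A'|=|B'|=\lceil \rho|U|\rceil$, then the expected color-$2$ edge density between them equals $d_{G_2}(A,B) \leq \epsilon$, so some such choice satisfies $d_{G_2}(A',B') \leq \epsilon$. Setting $r=1$ and $t_1=2$ in Definition \ref{d31} shows that $G_2$ is $(\alpha,\rho,\epsilon,2)$-sparse on $V$, completing the argument. Since the substantive work has already been absorbed into Lemma \ref{lemmaerdoshajnal2}, this corollary is essentially a rephrasing suited to the key lemma framework of Section \ref{section12}; there is no real obstacle, only the small bookkeeping step above.
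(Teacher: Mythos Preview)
Your argument is correct and follows exactly the approach the paper indicates: restrict $\Phi$ to a subset $U$ with $|U|\geq \alpha n$, observe that $\Psi$-freeness and the bi-$(q,\rho)$-density hypothesis are inherited, and apply Lemma~\ref{lemmaerdoshajnal2}. The paper states this in a single sentence without spelling out the bi-density check or the equal-size averaging step, but your added bookkeeping is sound and changes nothing substantive.
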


\noindent
The next statement follows immediately from
Corollary \ref{last} (with $\epsilon/4$ instead of $\epsilon $) and Corollary \ref{secondcor}.

\begin{corollary}\label{last2}
Let $d$, $k$, and $h$ be positive integers and $\Psi:E(K_k)
\rightarrow [2]$ be a $2$-coloring of the edges of $K_k$ such that
the graph of color $2$ is $d$-degenerate. Suppose that
$q,\alpha,\epsilon \in (0,1)$ and $\Phi:E(K_n) \rightarrow [s]$ is
a $\Psi$-free edge-coloring such that the graph of color $1$ is
bi-$(q,\alpha\rho)$-dense with
$\rho=(\epsilon/4)^dq^{k}k^{-2}$. Then the graph of color $2$ is
$((\frac{2}{\rho})^{h-1}\alpha,2^{1-h}\rho^h,\epsilon,2^h)$-sparse.
\end{corollary}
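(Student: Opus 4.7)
The plan is to derive Corollary~\ref{last2} as a direct composition of the two preceding corollaries in this section, so the only work is bookkeeping of parameters. First, I would invoke Corollary~\ref{last} with $\epsilon/4$ in place of its $\epsilon$. With this substitution the quantity $\epsilon^{d}q^{k}k^{-2}$ appearing in the hypothesis of Corollary~\ref{last} becomes $(\epsilon/4)^{d}q^{k}k^{-2}$, which is exactly the $\rho$ defined in our statement. The density assumption bi-$(q,\alpha\rho)$-dense is therefore the same in both statements, and $\Phi$ is still assumed to be $\Psi$-free with the graph of color $2$ still $d$-degenerate, so the hypotheses of Corollary~\ref{last} are indeed met.

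The output of that application is that the graph of color $2$ is $(\alpha,\rho,\epsilon/4,2)$-sparse. At this point I would feed this one-term sequence into Corollary~\ref{secondcor} with $r=1$ and with the same integer $h$ as in the statement we are proving. Corollary~\ref{secondcor} then upgrades $(\alpha,\rho,\epsilon/4,2)$-sparsity into $\bigl((\tfrac{2}{\rho})^{h-1}\alpha,\,2^{1-h}\rho^{h},\,\epsilon,\,2^h\bigr)$-sparsity, which is precisely the conclusion of Corollary~\ref{last2}.

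I do not anticipate a genuine obstacle here; there is no new combinatorial content beyond matching symbols. The one place requiring minor care is keeping the two distinct $\rho$'s consistent: the value of $\rho$ threaded through both applications must be the one defined in our statement, namely $\rho=(\epsilon/4)^{d}q^{k}k^{-2}$, which is exactly why one must invoke Corollary~\ref{last} with the parameter $\epsilon/4$ rather than $\epsilon$, so that the $\rho$ produced by the first corollary coincides with the $\rho$ consumed by the second.
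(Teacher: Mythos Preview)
Your proposal is correct and matches the paper's own proof exactly: the paper states that Corollary~\ref{last2} ``follows immediately from Corollary~\ref{last} (with $\epsilon/4$ instead of $\epsilon$) and Corollary~\ref{secondcor},'' which is precisely the two-step composition you describe, including the observation that the substitution $\epsilon\mapsto\epsilon/4$ is what makes the $\rho$'s line up.
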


\noindent
Pending one additional lemma, we are now ready to prove the main result of this
section, showing that pseudo-random graphs have strong induced
Ramsey properties.

\begin{theorem}\label{quasirandominduced}
Let $\chi \geq 2$ and $G$ be a $(p,\lambda)$-pseudo-random graph with
$0 <p \leq 3/4$ and $\lambda \leq ((\frac{p}{10k})^d2^{-pk})^{20\log
\chi}n$. Then every $d$-degenerate graph on $k$ vertices with chromatic
number at most $\chi$ occurs as an induced monochromatic copy in every
$2$-coloring of the edges of $G$. Moreover, all of these induced
monochromatic copies can be found in the same color.
\end{theorem}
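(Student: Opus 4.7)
The plan is to argue by contradiction. Suppose there is a $2$-edge-coloring of $G$ with colors red and blue such that, for each color, some $d$-degenerate $k$-vertex graph with chromatic number at most $\chi$ fails to occur as an induced monochromatic copy; call the offending graphs $H_1$ (avoided in red) and $H_2$ (avoided in blue). I would view this data as a $3$-edge-coloring $\Phi : E(K_n) \to \{1,2,3\}$, with color $1$ (``green'') assigned to non-edges of $G$, color $2$ to red, and color $3$ to blue. The absence of an induced red copy of $H_1$ is exactly the statement that $\Phi$ is $\Psi_1$-free, where $\Psi_1 : E(K_k) \to \{1,2\}$ puts color $2$ on $E(H_1)$ and color $1$ on $E(\overline{H_1})$. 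The color-$2$ subgraph of $\Psi_1$ is $H_1$, which is $d$-degenerate, so $\Psi_1$ fits the hypothesis of Corollary \ref{last2}; define $\Psi_2$ analogously for $H_2$.

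Next I would extract a scaffolding with low red density. Since $p \leq 3/4$ and $G$ is $(p,\lambda)$-pseudo-random, the complement $\overline{G}$ (the color-$1$ graph of $\Phi$) is bi-$(q,\beta)$-dense for some $q$ close to $1-p \geq 1/4$ and some $\beta$ of order $\lambda/n$. Corollary \ref{last2} applied with $\Psi_1$ then asserts that the red graph is $\bigl((2/\rho)^{h-1}\alpha,\,2^{1-h}\rho^{h},\,\epsilon,\,2^h\bigr)$-sparse for any $h$, with $\rho = (\epsilon/4)^{d} q^{k} k^{-2}$. Taking $h$ of order $\log\chi$ furnishes $2^h \geq \chi$ disjoint equal-sized subsets $U_1, \ldots, U_\chi \subset V(G)$ whose pairwise red density is below $\epsilon$. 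Fix a proper $\chi$-coloring of $H_2$ and label the $U_s$'s so that $U_s$ serves as the target for the $s$-th color class of $V(H_2)$.

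The main step is then to build an induced blue copy of $H_2$ with each vertex of color class $s$ placed inside $U_s$. I would order the vertices of $H_2$ in a $d$-degenerate order (each has at most $d$ earlier neighbors) and embed greedily. After placing $v_1, \ldots, v_i$, I maintain for every unplaced vertex $j$ a candidate set inside the target of its color class, consisting of vertices $v$ that are blue-adjacent in $G$ to every $v_\ell$ with $(\ell,j) \in E(H_2)$ and non-adjacent to every $v_\ell$ with $(\ell,j) \notin E(H_2)$. Pseudo-randomness of $G$ gives that each of the at-most-$d$ blue-neighbor conditions shrinks the candidate set by a factor close to $p$, while each non-edge condition shrinks it by a factor close to $1-p$; the low red density between targets ensures that ``edge of $G$'' and ``blue edge'' are almost the same for these restrictions. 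The entire embedding step would be packaged into an auxiliary density lemma (the promised Lemma \ref{densitylemma}).

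The main obstacle is the quantitative bookkeeping. Every embedded vertex shrinks all remaining candidate sets by a factor of roughly $(p/2)^d (1-p)^k$ and simultaneously worsens the usable pseudo-randomness error $\lambda/\sqrt{|A||B|}$; I need every candidate set to stay well above the pseudo-randomness threshold after all $k$ embedding steps. The bound $\lambda \leq \bigl((p/(10k))^d\,2^{-pk}\bigr)^{20\log\chi}\,n$ is tailored for exactly this: the factor $(p/(10k))^d\,2^{-pk}$ accounts for one embedding step (covering up to $d$ blue-neighbor restrictions and up to $k$ non-edge restrictions), and the exponent $20\log\chi$ reflects both the depth $h \approx \log\chi$ to which Corollary \ref{last2} was iterated to produce the $\chi$ targets and the way in which the iterative losses compound over the full embedding. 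Threading these losses through the induction so that the candidate sets remain comfortably nonempty at every step is where Lemma \ref{densitylemma} does its delicate work.
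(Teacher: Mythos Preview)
Your proposal is correct and follows essentially the same route as the paper: set up the contradiction and the $3$-coloring of $K_n$, use pseudo-randomness of $G$ to show the green (non-edge) graph is bi-$(q,\beta)$-dense with $q\approx 1-p$ and $\beta$ of order $\lambda/n$, apply Corollary~\ref{last2} with $h=\log\chi$ to extract $\chi$ equal-sized sets with small pairwise red density, split these into $k$ disjoint targets $V_1,\dots,V_k$ according to a proper $\chi$-coloring of $H_2$, and finish with Lemma~\ref{densitylemma}. The one point you underemphasize is that the greedy embedding must maintain not only the sizes of the candidate sets but also a bound on the red density between each pair of candidate sets corresponding to an edge of $H_2$ (the paper allows it to grow by a factor $1+1/k$ per step); controlling this growth is the content of the triangle-counting Claim inside Lemma~\ref{densitylemma} and is where pseudo-randomness is used most delicately.
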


Taking $p=1/k$, $n=k^{cd\log \chi}$ and constant
$c$ sufficiently large so that $((\frac{p}{10k})^d2^{-pk})^{20\log \chi}>n^{-0.1}$
one can easily see that this result implies Theorem \ref{quasicor1}.
To obtain Corollary \ref{payley}, recall that for a prime power $n$, the Paley graph $P_n$ has vertex
set $\mathbb{F}_n$ and distinct vertices $x,y \in \mathbb{F}_n$ are
adjacent if $x-y$ is a square. This graph is $(1/2,\lambda)$-pseudo-random with $\lambda=\sqrt{n}$
(see e.g., \cite{KrSu}).
Therefore, for sufficiently large constant $c$, the above theorem with $n=2^{ck\log^2 k}$, $p=1/2$ and $d=\chi=k$
implies that every graph on $k$ vertices occurs as an
induced monochromatic copy in all $2$-edge-colorings of the Paley graph.
Similarly, one can prove that there is a constant $c$ such that, with high probability,
the random graph $G(n,1/2)$ with $n \geq 2^{ck\log^2 k}$ satisfies that every graph
on $k$ vertices occurs as an induced monochromatic copy in all
$2$-edge-colorings of $G$.

\vspace{0.1cm} \noindent {\bf Proof of Theorem
\ref{quasirandominduced}.}\, Suppose for contradiction that there is
an edge-coloring $\Phi_0$ of $G$ with colors red and blue, and
$d$-degenerate graphs $H_1$ and $H_2$ each having $k$ vertices and
chromatic number at most $\chi$ such that there is no induced red
copy of $H_1$ and no induced blue copy of $H_2$. Since $H_1, H_2$
are $d$-degenerate graphs on $k$ vertices we may suppose that their
vertex set is $[k]$ and every vertex $i$ has at most $d$ neighbors
less than $i$ in both $H_1$ and $H_2$.

Consider the  red-blue-green edge-coloring $\Phi$ of the complete graph $K_n$, in which the
edges of $G$ have their original coloring $\Phi_0$,
and the edges of the complement $\bar G$ are colored green.
Let $\Psi$ be the edge-coloring of the complete graph $K_k$ where the red edges form a
copy of $H_1$ and the remaining edges are green. By assumption, the
coloring $\Phi$ is $\Psi$-free.
Since $G$ is $(p,\lambda)$-pseudo-random, we have that the density of edges
in $\bar G$ between any two disjoint sets $A, B$ of size at least $6p^{-1} \lambda$ is at least
$$d_{\bar G}(A,B)=1-d_G(A,B)\geq 1-\Big(p+\frac{\lambda}{\sqrt{|A||B|}}\Big) \geq 1-\frac{7}{6}p.$$
Therefore the green graph in coloring $\Phi$ is
bi-$(q,6p^{-1}\frac{\lambda}{n})$-dense for $q=1-7p/6$.

Let $\epsilon= \frac{p}{1000k^6}$,
$\rho=(\epsilon/4)^dq^{k}k^{-2}$, $h=\log \chi $, and
$\alpha=(\rho/2)^{h-1} $. Using that
$q=1-7p/6$ and $\lambda/n \leq ((\frac{p}{10k})^d2^{-pk})^{20\log \chi}$ it is straightforward to check that
$6p^{-1}\frac{\lambda}{n} \leq
2^{1-h}\rho^{h}=\alpha \rho$. By Corollary \ref{last2} and Definition \ref{d31}, there are
$2^h=\chi$ subsets $W_1,\ldots,W_{\chi}$ of $K_n$ with $|W_1|=\ldots=|W_\chi| \geq
2^{1-h}\rho^{h}n$, such that the sum of densities of red edges
between all pairs $W_i$ and $W_j$ is at most ${\chi \choose 2}\epsilon$.
Hence, the density between $W_i$
and $W_j$ is also at most $\chi^2\epsilon$ for all $1 \leq i < j \leq \chi$.
Partition every set $W_i$ into $k$ subsets
each of size $|W_i|/k \geq \frac{1}{k}2^{1-h}\rho^{h}n$.
Since the chromatic number of $H_2$ is at most $\chi$ and it has $k$ vertices, we can
choose for every vertex $i$ of $H_2$ one of these subsets, which we call $V_i$, such that all subsets
corresponding to vertices of $H_2$ in the same color class (of a proper $\chi$-coloring)
come from the same set $W_\ell$.
In particular, for every edge $(i,j)$ of $H_2$, the corresponding sets
$V_i$ and $V_j$ lie in two different sets $\{W_\ell\}$. Since the size of $V_i$'s is by
a factor $k$ smaller than the size
of $W_\ell$'s the density of red edges between $V_i$ and $V_j$
corresponding to an edge in $H_2$ is at most $k^2\chi^2\epsilon \leq
\frac{p}{1000k^2}$ (note that it can increase by a factor at most $k^2$ compare to density between sets $\{W_\ell\}$).
Notice that the subgraph $G' \subset G$ induced by $V_1 \cup \ldots \cup
V_{k}$ has $n' \geq 2^{1-h}\rho^{h}n$ vertices and is also $(p,\lambda)$-pseudo-random.
By the definitions of $\rho$ and $h$, and our assumption on $\lambda$, we have that
$$\lambda/n'\leq 2^{h-1}\rho^{-h}\lambda/n \leq 2^{h-1}\rho^{-h} \left(\Big(\frac{p}{10k}\Big)^d2^{-pk}\right)^{20\log \chi}
\leq   \left(\Big(\frac{p}{10k}\Big)^d2^{-pk}\right)^{10\log \chi}.$$
Applying Lemma \ref{densitylemma} below with $H=H_2$ to the coloring
$\Phi_0$ of graph $G'$ with partition $V_1 \cup \ldots \cup V_{k}$, we find an induced blue copy of $H_2$, completing
the proof. \hfill $\Box$

\begin{lemma}\label{densitylemma}
Let $H$ be a $d$-degenerate graph with vertex set $[k]$ such that
each vertex $i$ has at most $d$ neighbors less than $i$. Let
$G=(V,E)$ be a $(p,\lambda)$-pseudo-random graph on $n$ vertices
with $0<p \leq 3/4$, $\lambda \leq ((\frac{p}{10k})^d2^{-pk})^{10}n$
and let $V=V_1 \cup \ldots \cup V_k$ be a partition of its vertices
such that each $V_i$ has size $n/k$. Suppose that the edges of $G$
are $2$-colored, red and blue, such that for every edge $(j,\ell)$
of $H$, the density of red edges between the pair $(V_j,V_{\ell})$
is at most $\beta = \frac{p}{1000k^2}$. Then there is an induced
blue copy of $H$ in $G$ for which the image of every vertex $i \in
[k]$ lies in $V_i$.
\end{lemma}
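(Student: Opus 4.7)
The plan is to embed the vertices of $H$ greedily in the given order $1, 2, \ldots, k$, placing $v_i$ in $V_i$ while maintaining shrinking candidate sets for the yet-to-be-embedded vertices. After step $i$, for each $j > i$ I maintain a candidate set $U_j \subseteq V_j$ consisting of those $w \in V_j$ such that, for every $\ell \leq i$, the pair $(v_\ell, w)$ is a blue edge of $G$ when $(\ell, j) \in E(H)$ and a non-edge of $G$ when $(\ell, j) \notin E(H)$. Initially $U_j = V_j$. If the process succeeds through step $k-1$, then $v_1, \ldots, v_k$ form an induced blue copy of $H$ with $v_i \in V_i$, as required.

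I would fix shrinkage parameters $\alpha_1 = p/(5k)$ and $\alpha_2 = (1-p)/2$ (both positive because $p \leq 3/4$), and aim to maintain $|U_j| \geq \alpha_1^d \alpha_2^{k} \cdot n/k =: M$ throughout. Since each vertex $j$ has at most $d$ neighbors in $\{1,\ldots,j-1\}$ by the $d$-degenerate hypothesis, and at most $k$ non-neighbors in $\{1,\ldots,j-1\}$, this lower bound is preserved provided that, when picking $v_{i+1} \in U_{i+1}$, the updated set satisfies $|U_j^{\mathrm{new}}| \geq \alpha_1 |U_j|$ for every $j > i+1$ with $(i+1, j) \in E(H)$, and $|U_j^{\mathrm{new}}| \geq \alpha_2 |U_j|$ otherwise. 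The entire proof reduces to showing that such a ``good'' $v_{i+1}$ always exists in $U_{i+1}$.

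To count bad vertices, I would consider the two types of constraint separately. For a non-edge $(i+1,j) \notin E(H)$: $v \in U_{i+1}$ is bad for $j$ iff its $G$-neighborhood in $U_j$ has density exceeding $1-\alpha_2 = (1+p)/2$, which exceeds $p$ by at least $(1-p)/2 \geq 1/8$; applying the $(p,\lambda)$-pseudo-randomness of $G$ to the pair (bad set, $U_j$) bounds the number of such $v$ by $O(\lambda^2/|U_j|)$. For an edge $(i+1,j) \in E(H)$: split bad $v$'s according to whether $|N_G(v) \cap U_j| < (p-\alpha_1)|U_j|$ (bounded again by pseudo-randomness) or not; in the latter case $v$ has more than $(p - 2\alpha_1)|U_j| \geq (p/2)|U_j|$ red neighbors inside $U_j \subseteq V_j$, and since the total number of red edges between $V_{i+1}$ and $V_j$ is at most $\beta (n/k)^2 = p n^2/(1000k^4)$, at most $O(n^2/(k^4 |U_j|))$ such $v$'s can exist.

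Summing these bounds over at most $k$ values of $j$ and using $|U_j| \geq M$, the total number of bad vertices must be compared with $|U_{i+1}| \geq M$. After substituting $M$ and the hypothesis $\lambda/n \leq ((p/10k)^d 2^{-pk})^{10}$, the generous tenth-power slack in the bound on $\lambda$ will force the bad set to be a tiny fraction of $|U_{i+1}|$, so a good $v_{i+1}$ exists. The main obstacle is precisely the bookkeeping in this final step: one must verify that the exponents of $p/k$ and of $1-p$ appearing in $M$ are dominated by the corresponding exponents in the hypothesis on $\lambda$, using the estimate $(1-p)^k \geq 2^{-O(pk)}$ when $p$ is small and the trivial bound $((1-p)/2)^k \geq 2^{-3k}$ otherwise. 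Once this is checked, iterating the greedy step $k$ times produces the desired induced blue copy of $H$.
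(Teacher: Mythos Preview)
Your greedy embedding with shrinking candidate sets $U_j$ matches the paper's strategy, and your treatment of the pseudo-randomness errors (the ``small $G$-neighborhood'' bad vertices) is fine. The gap is in your treatment of the \emph{red-edge} bad vertices. You bound the number of $v\in U_{i+1}$ with more than $(p/2)|U_j|$ red neighbours in $U_j$ by
\[
\frac{\beta(n/k)^2}{(p/2)|U_j|} \;\leq\; \frac{n^2}{500\,k^4\,M},
\]
using only the \emph{initial} red edge count between the full parts $V_{i+1}$ and $V_j$. For this to be smaller than $|U_{i+1}|\geq M$ (even before summing over $j$), you would need $M^2 \gtrsim n^2/k^4$, i.e.\ $M\gtrsim n/k^2$. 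But $M=\alpha_1^{d}\alpha_2^{k}\,n/k$ with $\alpha_2=(1-p)/2\leq 1/2$, so $M\leq 2^{-k}n/k$, which is far smaller than $n/k^2$. The tenth-power slack in the hypothesis on $\lambda$ is irrelevant here: the red-edge bound does not involve $\lambda$ at all, so no assumption on $\lambda$ can rescue this step.

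What is missing is control of the red density between the \emph{current} candidate sets, not just between the original parts. The paper carries an extra invariant: for every edge $(j,\ell)$ of $H$ with $j,\ell>i$, the red density between $V_{j,i}$ and $V_{\ell,i}$ is at most $(1+1/k)^{i}\beta\leq e\beta$. Maintaining this requires choosing $v_{i+1}$ so that passing to $\tilde B(v_{i+1},\cdot)$ increases each such red density by at most a factor $1+1/k$; the paper shows this via a triangle-counting argument (their Claim) that uses pseudo-randomness twice to compare the red density between $N(w,V_{j,i})$ and $N(w,V_{\ell,i})$ with that between $V_{j,i}$ and $V_{\ell,i}$ for most $w$. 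Without this extra invariant the greedy step cannot be completed.
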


\begin{proof}
For $i<j$, let $D(i,j)$ denote the number of neighbors of $j$ that are at most $i$. Let $\epsilon_1=\frac{1}{k}$,
$\epsilon_2=\frac{p}{10k}$, and $\delta=(1-p)^kp^d$. Since $p \leq 3/4$, notice that $\delta
\geq 2^{-3pk}p^d$  and

\begin{eqnarray}
\label{eq2}
\lambda \leq \left(\Big(\frac{p}{10k}\Big)^d2^{-pk}\right)^{10}n \leq \frac{p^8}{(10k)^{10}}\delta^2n.
\end{eqnarray}
We construct an induced blue
copy of $H$ one vertex at a time. At the end of step $i$, we will have vertices
$v_1,\ldots,v_i$ and subsets $V_{j,i} \subset V_j$ for $j >i $ such that the following four conditions hold
\begin{enumerate}
\item  for $j, \ell \leq i$, if $(j,{\ell})$ is an
edge of $H$, then $(v_j,v_{\ell})$ is a blue edge of $G$, otherwise
$v_j$ and $v_{\ell}$  are not adjacent in $G$,
\item for $ j \leq i < \ell$, if $(j,{\ell})$ is an
edge of $H$, then $v_j$ is adjacent to all vertices in $V_{\ell,i}$
by blue edges, otherwise there are no edges of $G$ from $v_j$ to
$V_{\ell,i}$,
\item for $i < j$, we have $|V_{j,i}| \geq (1-p-\epsilon_2)^{i-D(i,j)}(p-\epsilon_2)^{D(i,j)}|V_j|$,
\item and for $j, \ell>i$ if $(j,\ell)$ is an edge of $H$, then the density of red edges between $V_{j,i}$ and $V_{\ell,i}$ is
at most $(1+\epsilon_1)^i \beta$.
\end{enumerate}

Clearly, in the end of the first $k$ steps of this process we obtain a
required copy of $H$. For $i=0$ and $j \in [k]$, define
$V_{j,0}=V_j$. Notice that the above four properties are satisfied
for $i=0$ (the first two properties being vacuously satisfied). We
now assume that the above four properties are satisfied at the end
of step $i$, and show how to complete step $i+1$ by finding a vertex
$v_{i+1} \in V_{i+1,i}$ and subsets $V_{j,i+1} \subset V_{j,i}$ for $j>i+1$ such
that the conditions 1-4 still hold.

We need to introduce some notation. For a vertex $w \in V_j$ and a
subset $S \subset V_{\ell}$ with $j \not =\ell$, let
\begin{itemize}
\item $N(w,S)$ denote the set of vertices $s \in S$ such that $(s,w)$ is an edge of
$G$,
\item $B(w,S)$ denote the set of vertices $s \in S$ such that $(s,w)$ is a
blue edge of $G$,
\item $R(w,S)$ denote the set of vertices $s \in S$ such that $(s,w)$ is a
red edge of $G$,
\item $\tilde{N}(w,S)=N(w,S)$ if $(j, \ell)$ is an edge of $H$ and $\tilde{N}(w,S)=S
\setminus N(w,S)$ otherwise,
\item $\tilde{B}(w,S)=B(w,S)$ if $(j, \ell)$ is an edge of $H$ and $\tilde{B}(w,S):=S
\setminus N(w,S)$ otherwise, and
\item
$p_{j,\ell}=p$ if $(j,\ell)$ is an edge of $H$ and
$p_{j,\ell}=1-p$ if $(j,\ell)$ is not an edge of $H$.
\end{itemize}
Note that since graph $G$ is pseudo-random with edges density $p$, by the above definitions,
for every large subset $S \subset V_{\ell}$ and for most vertices $w \in V_j$ we expect
the size of $\tilde{N}(w,S)$ to be roughly  $p_{j,\ell}|S|$. We also have for
all $S \subset V_{\ell}$ and $w \in V_j$ that
$\tilde{B}(w,S)=\tilde{N}(w,S) \setminus R(w,S)$.

Call a vertex $w \in V_{i+1,i}$ {\it good} if for all $j >i+1$,
$\tilde{B}(w,V_{j,i})\geq (p_{i+1,j}-\epsilon_2)|V_{j,i}|$ and
for every edge $(j,\ell)$ of $H$ with $j, \ell>i+1$,
the density of red edges between $\tilde{B}(w,V_{j,i})$ and $\tilde{B}(w,V_{\ell,i})$ is at most
$(1+\epsilon_1)^{i+1}\beta$. If we find a good vertex $w \in V_{i+1,i}$, then we simply let
$v_{i+1}=w$ and $V_{j,i+1}=\tilde{B}(w,V_{j,i})$ for $j > i+1$,
completing step $i+1$. It therefore suffices to show that there is a
good vertex in $V_{i+1,i}$.

We first throw out some vertices of $V_{i+1,i}$ ensuring that the
remaining vertices satisfy the first of the two properties of good
vertices. For $j >i+1$ and an edge $(i+1,j)$ of $H$, let
$R_{j}$ consist of those $w \in V_{i+1,i}$ for which the
number of red edges $(w,w_j)$ with $w_j \in V_{j,i}$ is at least
$\frac{\epsilon_2}{2}|V_{j,i}|$. Since the density of red between
$V_{i+1,i}$ and $V_{j,i}$ is at most $(1+\epsilon_1)^i\beta$, then
$R_{j}$ contains at most
$$|R_{j}| \leq \frac{(1+\epsilon_1)^i\beta|V_{i+1,i}||V_{j,i}|}{\frac{\epsilon_2}{2}|V_{j,i}|}=2(1+\epsilon_1)^i\epsilon_2^{-1}\beta|V_{i+1,i}|$$
vertices. Let $V'$ be the set of vertices in $V_{i+1,i}$ that are not in
any of the $R_j$. Using that $\epsilon_1=1/k, \epsilon_2=\frac{p}{10k}$ and $
\beta=\frac{p}{1000k^2}$ we obtain
\begin{eqnarray*}
|V'| &\geq& |V_{i+1,i}|-\sum_{j>i+1}|R_j| \geq |V_{i+1,i}|-k\Big(2(1+\epsilon_1)^i\epsilon_2^{-1}\beta|V_{i+1,i}|\Big)\\
&\geq&
\Big(1-2k(1+\epsilon_1)^k\epsilon_2^{-1}\beta\Big)|V_{i+1,i}|
\geq \frac{1}{2}|V_{i+1,i}|.
\end{eqnarray*}

For $j >i+1$, let $S_j$ consist of those $w \in V'$
for which $\tilde{N}(w,V_{j,i}) < (p_{i+1,j}-\frac{\epsilon_2}{2})|V_{j,i}|$.
Then the density of edges of $G$ between
$S_j$ and $V_{j,i}$ deviates from $p$ by at least $\frac{\epsilon_2}{2}$.  Since
graph $G$ is $(p,\lambda)$-pseudo-random, we obtain that $\frac{\epsilon_2}{2} \leq
\frac{\lambda}{\sqrt{|V_{j,i}||S_j|}}$ and hence $|S_j| \leq \frac{4\lambda^2}{\epsilon_2^2|V_{j,i}|}$.
 Also using that $p \leq 3/4$ we have
$1-p-\epsilon_2=1-p-\frac{p}{10k} \geq (1-\frac{1}{3k})(1-p)$.
Therefore, our third condition, combined with $\delta=(1-p)^kp^d$ and $(1-x)^t \geq 1-xt$ for all $0 \leq x \leq 1$,
imply that for $j \geq i+1$
\begin{eqnarray}
\label{eq1}
|V_{j,i}| &\geq& (1-p-\epsilon_2)^{i-D(i,j)}(p-\epsilon_2)^{D(i,j)}|V_j| \geq
(1-p-\epsilon_2)^k(p-\epsilon_2)^d|V_j| \nonumber\\
 &\geq& \left(\Big(1-\frac{1}{3k}\Big)(1-p)\right)^k
\left(p-\frac{p}{10k}\right)^d|V_j| \nonumber \\
&\geq& \left(1-\frac{1}{3k}\right)^k\left(1-\frac{1}{10k}\right)^k(1-p)^kp^d|V_j| \nonumber\\
 &\geq&
\frac{1}{2}(1-p)^kp^d|V_i|= \frac{\delta n}{2k}.
\end{eqnarray}
Since $\lambda \leq \frac{p\delta}{100k^3}n$ (see (\ref{eq2}))
and $\epsilon_2=\frac{p}{10k}$, we therefore have $|S_j| \leq
\frac{4\lambda^2}{\epsilon_2^2|V_{j,i}|} \leq
\frac{1}{4k}|V_{i+1,i}|$. Let $V''$ be the set of vertices in $V'$
that are not in any of the sets $S_j$. The cardinality of $V''$ is at least
$$|V''|\geq |V'|-\sum_{j>i+1}|S_j| \geq |V'|-k\cdot\Big(\frac{1}{4k}|V_{i+1,i}|\Big) \geq
|V'|-\frac{1}{4}|V_{i+1,i}| \geq \frac{1}{4}|V_{i+1,i}|.$$
Moreover, by definition, for every $j>i+1$ and every vertex
$w \in V''$ there are  $|R(w,V_{j,i})| \leq \frac{\epsilon_2}{2}|V_{j,i}|$ red edges from
$w$ to $V_{j,i}$ if $(i+1,j)$ is an edge of $H$ and also $\tilde{N}(w,V_{j,i})$ has size at least $(p_{i+1,j}-\frac{\epsilon_2}{2})|V_{j,i}|$.
This implies that
$$|\tilde{B}(w,V_{j,i})|= |\tilde{N}(w,V_{j,i}) \setminus R(w,V_{j,i})| \geq |\tilde{N}(w,V_{j,i})|-\frac{\epsilon_2}{2}|V_{j,i}| \geq
(p_{i+1,j}-\epsilon_2)|V_{j,i}|$$
and therefore the vertices of $V''$ satisfy the first
of the two properties of good vertices.

We have reduced our goal to showing that there is an element of
$V''$ that has the second property of good vertices. For $i+1<j<\ell
\leq k$ and $(j,\ell)$ an edge of $H$, let $T_{j,\ell}$ denote the
set of $w \in V''$ such that the density of red edges between
$\tilde{B}(w,V_{j,i})$ and $\tilde{B}(w,V_{\ell,i})$ is more than
$(1+\epsilon_1)^{i+1}\beta$. Notice that any vertex of $V''$ not in
any of the sets $T_{j,\ell}$ is good. Therefore, if we show that
$T_{j,\ell} <\frac{|V''|}{k^2}$ for each $T_{j,\ell}$, then there is
a good vertex in $V''$ and the proof would be complete. To do so we
will assume without loss of generality that $p_{i+1,j}$ and
$p_{i+1,\ell}$ are both $p$ (the other 3 cases can be treated
similarly using the fact that $\bar G$ is
$(1-p,\lambda)$-pseudo-random). Since by (\ref{eq1}) we have that
$|V_{\ell,i}|, |V_{j,i}| \geq \frac{\delta n}{2k}$ and
$\frac{|V''|}{k^2} \geq \frac{1}{4k^2}|V_{i+1,i}| \geq \frac{\delta
n}{8k^3}$, the result follows from the following claim.

\begin{claim}
Let $X,Y$ and $Z$ be three disjoint subsets of our
$(p,\lambda)$-pseudo-random graph $G$ such that $|X| \geq
\frac{\delta n}{8k^3}$ and $|Y|,|Z| \geq \frac{\delta n}{2k}$. For
every $w \in X$ let $B_1(w), B_2(w)$ be the set of vertices in $Y$
and $Z$ respectively connected to $w$ by a blue edge and suppose
that $|B_1(w)|\geq (p-\frac{p}{10k})|Y|$ and $|B_2(w)| \geq
(p-\frac{p}{10k})|Z|$. Also suppose that the density of red edges
between $Y$ and $Z$ is at most $\eta$ for some $\eta \geq
\frac{p}{1000k^2}$. Then there is a vertex $w\in X$ such that the
density of red edges between $B_1(w)$ and $B_2(w)$ is at most
$\frac{k+1}{k}\eta$.
\end{claim}

\noindent {\bf Proof.} Let $m$ denote the number of triangles
$(x,y,z)$ with $x \in X, y \in Y, z \in Z$, such that the edge
$(y,z)$ is red. We need an upper bound on $m$. Let $U$ be the set of
vertices in $Y$ that have fewer than $p^3\delta^3(10k)^{-10}n$ red
edges to $Z$. So the number $m_1$ of triangles $(x,y,z)$ which have
$y\in U$ and edge $(y,z)$ red is clearly at most $m_1\leq
p^3\delta^3(10k)^{-10}n^3$. Let $W_1, W_2$ denote the subsets of
vertices in $Y$ whose number of neighbors in $X$ is at least
$(p+\frac{p}{20k})|X|$ or respectively at most
$(p-\frac{p}{20k})|X|$. Since the density of edges between $W_i$ and
$X$ deviates from $p$ by more than $\frac{p}{20k}$, using
$(p,\lambda)$-pseudo-randomness of $G$, we have $\frac{p}{20k} \leq
\frac{\lambda}{\sqrt{|X||W_i|}}$, or equivalently, $|X||W_i| \leq
400k^2p^{-2}\lambda^2.$ Therefore, using the upper bound $\lambda
\leq \frac{p^8}{(10k)^{10}}\delta^2n$ from (\ref{eq2}), the number
$m_2$ of triangles $(x,y,z)$ with $y \in W=W_1 \cup W_2$ and edge
$(y,z)$ red is at most
$$m_2 \leq |X||W|n \leq 800k^2p^{-2}\lambda^2n \leq (10k)^{-10}p^4\delta^4n^3.$$

For $y \in Y \setminus (U \cup W)$, we have the number of neighbors
of $y$ in $X$ satisfy $\big|\frac{|N(y, X)|}{|X|}-p\big|\leq
\frac{p}{20k}$ and the number of red edges from $y$ to $Z$ is at
least $p^3\delta^3(10k)^{-10}n$. Recall that $R(y,Z)$ denotes the
set of vertices in $Z$ connected to $y$ by red edges, hence we have
that $|R(y,Z)| \geq p^3\delta^3(10k)^{-10}n$ for every $y \in Y
\setminus (U \cup W)$. We also have that $|N(y, X)| \geq p|X|/2 \geq
\frac{p\delta n}{16k^3}$. Since $G$ is $(p,\lambda)$-pseudo-random,
we can bound the number of edges between $N(y, X)$ and $R(y,Z)$ by
$p|N(y, X)||R(y,Z)|+\lambda\sqrt{|N(y, X)||R(y,Z)|}$. Using the
above lower bounds on $|N(y, X)|$ and $|R(y,Z)|$, and the upper
bound (\ref{eq2}) for $\lambda$, one can easily check that
$$\frac{\lambda}{\sqrt{|N(y, X)||R(y,Z)|}}
\leq\frac{\lambda}{\sqrt{\big(p\delta n/(16k^3)\big)
\big(p^3\delta^3(10k)^{-10}n\big)}} \leq \frac{p}{20k}.$$ Hence the
number of edges between $N(y, X)$ and $R(y,Z)$ is at most
$(p+\frac{p}{20k})|N(y, X)||R(y,Z)|$. Recall that for all
$y \in Y \setminus (U \cup W)$ we have that
$|N(y, X)| \leq \big(p+\frac{p}{20k}\big)|X|$. Also, since the density
of red edges between $Y$ and $Z$ is at most $\eta$, we have that
$\sum_y|R(y,Z)| \leq \eta|Y||Z|$. Therefore, the
number $m_3$ of triangles $(x,y,z)$ with $y \in Y \setminus (U \cup
W), x\in X, z\in Z$ such that the edge $(y,z)$ is red is at most
$$m_3 \leq \Big(p+\frac{p}{20k}\Big)\sum_{y \in Y \setminus (U \cup W)}
 |N(y, X)||R(y,Z)|\leq
\Big(p+\frac{p}{20k}\Big)^2|X|\sum_y|R(y,Z)| \leq
\Big(p+\frac{p}{20k}\Big)^2\eta|X||Y||Z|.$$

Using the lower bounds on $|X|, |Y|, |Z|, \eta$ from the assertion of the claim we have that
$$p^2\eta|X||Y||Z| \geq \frac{p^3\delta^3}{(10k)^7}n^3 \geq (10k)^3 \max\big(m_1, m_2\big).$$
This implies that the total number of
triangles $(x,y,z)$ with $x \in X, y \in Y, z \in Z$, such that
the edge $(y,z)$ is red is at most
\begin{eqnarray*}
m &=& m_1+m_2+m_3 \leq
2\frac{p^2\eta|X||Y||Z|}{(10k)^3}+\Big(p+\frac{p}{20k}\Big)^2\eta|X||Y||Z|\\
&\leq& \big(1+1/(8k)\big)p^2\eta|X||Y||Z|.\end{eqnarray*} Therefore,
there is vertex $w \in X$ such that the number of these triangles
through $w$ is at most $(1+1/(8k))p^2\eta|Y||Z|$. Since $B_1(w)
\subset N(w,Y)$ and $B_2(w) \subset N(w,Z)$, then the number of red
edges between $B_1(w)$ and $B_2(w)$ is at most
$(1+1/(8k))p^2\eta|Y||Z|$. Since we have that $|B_1(w)|\geq
(p-\frac{p}{10k})|Y|$ and $|B_2(w)| \geq (p-\frac{p}{10k})|Z|$, the
density of red edges between $B_1(w)$ and $B_2(w)$ can be at most
$$\frac{(1+1/(8k))p^2\eta|Y||Z|}{|B_1(w)||B_2(w)|} \leq \frac{(1+1/(8k))p^2\eta} {(p-\frac{p}{10k})^2} \leq
\frac{k+1}{k}\eta,$$ completing the proof.
\end{proof}

\section{Trees with superlinear induced Ramsey numbers}
\label{superlinear}
In this section we prove Theorem \ref{tree}, that there are trees whose
induced Ramsey number is superlinear in the number of vertices. The proof
uses Szemer\'edi's regularity lemma, which we mentioned in the introduction.

A red-blue edge-coloring of the edges of a graph partitions the
graph into two monochromatic subgraphs, the {\it red graph}, which
contains all vertices and all red edges, and the {\it blue graph},
which contains all vertices and all blue edges. The weak induced
Ramsey number $r_{\textrm{weak ind}}(H_1,H_2)$, introduced by Gorgol
and \L uczak \cite{GoLu}, is the least positive integer $n$ such
that there is a graph $G$ on $n$ vertices such that for every
red-blue coloring of the edges of $G$, either the
red graph contains $H_1$ as an induced subgraph or the blue graph
contains $H_2$ as an induced subgraph. Note that this definition is a relaxation of the
induced Ramsey numbers since we allow blue edges  between the vertices of
red copy of $H_1$ or red edges between the vertices of blue copy of $H_2$.
Therefore a weak induced Ramsey number lies between the
usual Ramsey number and the induced Ramsey number. Using this new notion we
can strengthen Theorem \ref{tree} as follows. Recall that $K_{1,k}$ denotes a star with $k$ edges.

\begin{theorem}\label{weak}
For each $\alpha \in (0,1)$, there is a constant $k(\alpha)$ such that if $H$ is a graph on
$k \geq k(\alpha)$ vertices with maximum independent set of size less than
$(1-\alpha)k$, then $r_{\textrm{weak ind}}(H,K_{1,k}) \geq
\frac{k}{\alpha}$.
\end{theorem}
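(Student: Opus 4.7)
The plan is to show that every graph $G$ on $n<k/\alpha$ vertices admits a red--blue edge-coloring of $E(G)$ without a red induced copy of $H$ and without a blue induced copy of $K_{1,k}$. The core idea is to select a subset $U\subseteq V(G)$ of size at most $\lfloor\alpha n\rfloor<k$, color all edges incident to $U$ red, and color the edges inside $V\setminus U$ mostly blue, inserting only a sparse collection of extra red edges to destroy stars. For any $k$-subset $S$ we then have $|S\setminus U|\ge k-|U|>(1-\alpha)k$, and since almost all edges inside $V\setminus U$ are blue, the set $S\setminus U$ is essentially a red-independent subset of $S$, yielding $\alpha(R[S])>(1-\alpha)k>\alpha(H)$, which rules out $R[S]\cong H$. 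Conversely, the extra red edges inside $V\setminus U$ are tailored to cover the centers of all potential induced $K_{1,k}$'s in the blue subgraph of $G[V\setminus U]$.

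To locate $U$ and design the local red-insertion pattern in a way that works for every $G$, I would apply Szemer\'edi's regularity lemma to $G$ with a small parameter $\epsilon=\epsilon(\alpha)$, obtaining an equitable partition $V(G)=V_0\cup V_1\cup\cdots\cup V_L$ together with the reduced graph on $[L]$ that records the $\epsilon$-regular pairs of density at least $\tfrac12$. Using $n<k/\alpha$ and an averaging argument on the reduced graph, I would then select $U$ of size at most $\alpha n$ so that $G[V\setminus U]$ is controlled block-by-block: inside each dense regular cluster I would insert red edges at random with probability tuned to $\alpha$, large enough that (with high probability) no vertex's blue neighborhood carries an independent set of size $k$, but small enough that the total red density on $V\setminus U$ is $o(1)$. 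The verification splits into two parts: on the red side, the large red-independent subset $S\setminus U$ combined with the sparseness of the extra red edges keeps $\alpha(R[S])>\alpha(H)$; on the blue side, concentration inside each regular cluster rules out induced blue $K_{1,k}$'s.

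The main obstacle is the joint choice of $U$ and of the intra-$(V\setminus U)$ red-insertion pattern. A pure vertex-cover choice (no local insertions) fails for dense regular structures such as $K_{m,m}$ with $m$ close to $k/(2\alpha)$, since covering all induced $K_{1,k}$'s there requires deleting $\Omega(m-k)=\Omega(k/\alpha)$ vertices, far more than $\alpha k$. The regularity lemma is what allows the combination of deletion and sparse random local insertions to work uniformly; the hypothesis $k\ge k(\alpha)$ is used to absorb the regularity error terms (the $\epsilon n$ exceptional vertices and the $\epsilon L^2$ irregular pairs) and to concentrate the local random-coloring tail bounds, while the slack $\alpha(H)<(1-\alpha)k$ on the red side is precisely what lets the sparse red insertions survive the verification argument.
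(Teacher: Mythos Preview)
Your color assignment is backwards, and this creates a genuine gap. You try to avoid an induced $H$ in the \emph{sparse} color (red) and an induced $K_{1,k}$ in the \emph{dense} color (blue). But avoiding an induced star in the dense color is precisely where the argument breaks. Take $G=K_{m,m}$ with $m=n/2$ close to $k/(2\alpha)$. However you choose $U$ with $|U|\le\alpha n$, the graph $G[V\setminus U]$ is still complete bipartite with both parts of size at least $(1/2-\alpha)n\gg k$. For any vertex $v$ on one side, its $G$-neighbors lie entirely on the other side and are pairwise \emph{non-adjacent in $G$}; hence they are automatically blue-non-adjacent no matter how you color. So any $k$ blue neighbors of $v$ already form an induced blue $K_{1,k}$. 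The only way to kill these stars is to force every vertex in $V\setminus U$ to have red degree at least $m'-k+1$ (where $m'\approx(1/2-\alpha)n$), which gives red density roughly $1-2\alpha$ inside $V\setminus U$ --- the opposite of the ``$o(1)$ red density'' you need for the claim that $S\setminus U$ is essentially red-independent. Your own $K_{m,m}$ diagnosis was correct, but random red insertions cannot fix it: there are no $G$-edges among the leaves to recolor.

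The paper's proof swaps the roles: the sparse color is assigned to the \emph{star} and the dense color to $H$. This is the crucial move, because forbidding $K_{1,k}$ in a graph of maximum degree $<k$ is trivial. Concretely, the paper first disposes of the case $e(H)\ge c_\delta k^2$ by a straightforward random coloring. For sparse $H$ it proves a lemma (via regularity) giving a coloring in which one color has maximum degree $<\delta n<k$, while in the other color every vertex set $W$ either spans $\ge c_\delta n^2$ edges or contains an independent set of size $\ge |W|-\delta n$. An induced copy of a sparse $H$ in the dense color would span fewer than $c_\delta n^2$ edges, forcing an independent set of size $>(1-\alpha)k>\alpha(H)$ --- contradiction. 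You are missing both the $e(H)$ case split and this dichotomy lemma; without them the approach does not close.
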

Let $T$ be a tree which is a union of path of length $k/2$ with the
star of size $k/2$ such that the end point of the path is the center
of the star. Since $T$ contains the path $P_{k/2}$ and the star
$K_{1,k/2}$ as induced subgraphs, then $r_{\textrm{ind}}(T) \geq
r_{\textrm{weak ind}}(P_{k/2},K_{1,k/2})$. By  using the above theorem with
$k/2$ instead of $k$, $H=P_{k/2}$, and sufficiently small
$\alpha$, we obtain that $r_{\textrm{ind}}(T)/k \rightarrow \infty$.
Moreover the same holds for every sufficiently large tree which
contains a star and a matching of linear size as subgraphs. We
deduce Theorem \ref{weak} from the following lemma.

\begin{lemma}\label{twocoloring} For each $\delta>0$ there is a constant $c_{\delta}>0$ such that if $G=(V,E)$ is a graph
on $n$ vertices, then there is a $2$-coloring of the edges of $G$
with colors red and blue such that the red graph has maximum degree
less than $\delta n$ and for every subset $W \subset V$, either there are
at least $c_{\delta}n^2$ blue edges in the subgraph induced by $W$
or there is an independent set in $W$ in the blue graph of
cardinality at least $|W|-\delta n$.
\end{lemma}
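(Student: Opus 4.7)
The plan is to apply the degree form of Szemer\'edi's regularity lemma, color the edges of the resulting regular subgraph blue, and color the remaining edges red. Set $\epsilon=\delta/4$ and $d=\delta/2$, and invoke the degree form to obtain a partition $V=V_0\cup V_1\cup\cdots\cup V_k$ together with a spanning subgraph $G'\subseteq G$ such that $|V_0|\leq \epsilon n$, the parts $V_1,\ldots,V_k$ have common size $m$, every vertex $v$ satisfies $\deg_{G\setminus G'}(v)<(d+\epsilon)n$, $G'$ has no edge inside any single $V_i$, and each pair $(V_i,V_j)$ with $1\leq i<j\leq k$ is $\epsilon$-regular in $G'$ with density either $0$ or at least $d$. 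Color the edges of $G'$ blue and the edges of $G\setminus G'$ red.

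The red max-degree bound is immediate from the degree form: every vertex has red degree $\deg_{G\setminus G'}(v)<(d+\epsilon)n=3\delta n/4<\delta n$. For the dichotomy, fix $W\subseteq V$ and set $I=\{i\in[k]:|W\cap V_i|\geq \epsilon m\}$. If there exist distinct $i,j\in I$ with $(V_i,V_j)$ of $G'$-density at least $d$, then $\epsilon$-regularity applied to the pair $(W\cap V_i,W\cap V_j)$ produces at least $(d-\epsilon)(\epsilon m)^2\geq c_\delta n^2$ blue edges inside $W$, where $c_\delta=(d-\epsilon)\epsilon^2/(2M)^2$ with $M=M(\epsilon,d)$ the bound on $k$ from the regularity lemma. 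Otherwise every pair $(V_i,V_j)$ with $i,j\in I$ and $i\neq j$ has $G'$-density $0$, so the set $S=\bigcup_{i\in I}(W\cap V_i)$ contains no blue edges (using also that $G'$ has no edges inside any single $V_i$); its size is at least $|W|-|V_0|-\sum_{i\notin I}|W\cap V_i|\geq |W|-\epsilon n-k\epsilon m\geq |W|-2\epsilon n=|W|-\delta n/2$, yielding a blue-independent set in $W$ of size at least $|W|-\delta n$.

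The main technical point is invoking a version of the regularity lemma that simultaneously controls the number of edges each individual vertex loses when passing to $G'$; the standard degree form supplies this by pushing into the small set $V_0$ any vertex that has atypical degree in some regular pair, so that the spanning subgraph $G'$ is only a bounded number of edges per vertex away from $G$. Once this ingredient is available, the rest of the argument is the clean dichotomy on whether the reduced cluster graph contains an edge among the clusters that $W$ meets significantly: $W$ either inherits many blue edges from a dense regular pair, or is essentially covered by clusters whose pairwise $G'$-density is zero, giving a blue-independent set of nearly full size.
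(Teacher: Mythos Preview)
Your proof is correct and follows essentially the same approach as the paper's: apply the regularity lemma, declare edges in dense regular pairs to be blue, and argue the dichotomy on whether $W$ meets two clusters joined by a dense regular pair. The only difference is that you invoke the degree form of the regularity lemma, which packages the per-vertex degree control that the paper obtains by hand (the paper uses the basic regularity lemma, identifies the set $V'$ of high-degree vertices in the leftover graph, forces their edges to be blue, and shows $|V'|\leq \delta n/10$); this is a presentational shortcut rather than a different idea.
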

\begin{proof} Let $\epsilon=\frac{\delta^2}{100}$. By Szemer\'edi's
regularity lemma, there is a positive integer $M(\epsilon)$ together with
an equitable partition $V=\bigcup_{i=1}^k V_i$ of vertices of the graph $G=(V,E)$ into $k$ parts with
$\frac{1}{\epsilon}<k<M(\epsilon)$ such that all but at most $\epsilon
k^2$ of the pairs $(V_i,V_j)$ are $\epsilon$-regular. Recall that a partition is equitable if
$\big||V_i|-|V_j|\big| \leq 1$ and
a pair $(V_i,V_j)$ is called $\epsilon$-regular if for every $X \subset V_i$ and $Y
\subset V_j$ with $|X| > \epsilon |V_i|$ and $|Y| > \epsilon |V_j|$, we
have $|d(X,Y)-d(V_i,V_j)|<\epsilon$.
Let $c_{\delta}=\epsilon M(\epsilon)^{-2}$. Notice that to prove Lemma
\ref{twocoloring}, it suffices to prove it under the assumption that
$n $ is sufficiently large. So we may assume that $n \geq
\epsilon^{-1}M(\epsilon)$.

If a pair $(V_i,V_j)$ is $\epsilon$-regular with density
$d(V_i,V_j)$ at least $2\epsilon$, then color the edges between
$V_i$ and $V_j$ blue. Let $G'$ be the subgraph of $G$ formed by
deleting the edges of $G$ that are already colored blue. Let $V'$ be
the vertices of $G'$ of degree at least $\delta n$. Color blue any
edge of $G'$ with a vertex in $V'$. The remaining edges are colored
red. First notice that every vertex has red degree less than $\delta
n$.

We next show that $|V'|$ is small by showing that $G'$ has few
edges. There are at most
$$\sum_{i=1}^k {|V_i| \choose 2} \leq \frac{n^2}{k} \leq
\epsilon n^2$$ edges $(v,w)$ of $G$ with $v$ and $w$ both in the
same set $V_i$. Since at most $\epsilon k^2$ of the pairs
$(V_i,V_j)$ are not $\epsilon$-regular, then there are at most
$\epsilon n^2$ edges in such pairs. The $\epsilon$-regular pairs
$(V_i,V_j)$ with density less than $2\epsilon$ contain at most a
fraction $2\epsilon$ of all possible edges on $n$ vertices. So there
are less than $\epsilon n^2$ edges of this type. Therefore the
number of edges of $G'$ is at most $3\epsilon n^2$, and therefore
there are at most $|V'| \leq 2\frac{e(G')}{\delta n} \leq 6\epsilon
\delta^{-1}n<\frac{\delta n}{10}$ vertices of degree at least
$\delta n$ in it.

Let $W \subset V$. Let $W'=W \setminus V'$, so $W'$ has
cardinality at least $|W|-\frac{\delta n}{10}$. Let $W_i =V_i \cap
W'$. Let $W''=\bigcup_{|W_i| \geq \epsilon |V_i|} W_i$. Notice that
for any $i \in [k]$ there are at most $\epsilon \frac{n}{k}$
vertices in $(W' \setminus W'') \cap V_i$, so there are at most
$k(\epsilon \frac{n}{k}) =\epsilon n = \frac{\delta^2 n}{100}$
vertices in $W'\setminus W''$. Therefore, $W''$ has at least
$|W|-\delta n$ vertices. If there are $i \not =j$ such that
 $|W_i|,|W_j| \geq \epsilon\frac{n}{k}$ and the
pair $(V_i,V_j)$ is $\epsilon$-regular with density at least
$2\epsilon$, then there are at least
$$\epsilon|W_i||W_j| \geq \frac{\epsilon}{k^2}n^2 \geq \epsilon M(\epsilon)^{-2}n^2=c_{\delta}n^2$$ blue edges between $W_i$ and $W_j$.
In this case the blue subgraph induced by $W$
has at least $c_{\delta}n^2$ edges. Otherwise, all the edges in
$W''$ are red, and $W''$ is an independent set in the blue graph of
cardinality at least $|W|-\delta n$.
\end{proof}

\vspace{0.15cm} \noindent {\bf Proof of Theorem \ref{weak}.}\, Let
$H$ be a graph on $k$ vertices with maximum independent set of size
less than $(1-\alpha)k$. Take $\delta=\alpha^2$ and  $c_{\delta}$ to
be as in Lemma \ref{twocoloring}. Let $G=(V,E)$ be any graph on $n$
vertices, where $n \leq \frac{k}{\alpha}$. If $H$ has at least
$c_{\delta}k^2$ edges, consider a random red-blue coloring of the
edges of $G$ such that the probability of an edge being red is
$\frac{\alpha}{2}$. The expected degree of a vertex in the red graph
is at most $\alpha n/2$. Therefore by the standard Chernoff bound
for the Binomial distribution it is easy to see that with
probability $1-o(1)$ the degree of every vertex in the red graph is
less than $\alpha n\leq k$, i.e., it contains no $K_{1,k}$. On the
other hand, for $k$ sufficiently large, the probability that the
blue graph contains a copy of $H$ is at most
$$ n^k (1-\alpha/2)^{e(H)} \leq n^k e^{-\alpha c_{\delta}k^2/2} \leq
e^{-\alpha c_{\delta}k^2/2+k\log (k/\alpha)}=o(1).$$
Thus with high probability this coloring has no blue copy of $H$ as well.
This implies that we can assume that the number of edges in $H$ is less than $c_{\delta}k^2$.

By Lemma \ref{twocoloring}, there is a red-blue edge-coloring of the
edges of $G$ such that the red graph has maximum degree at most
$\delta n$ and every subset $W \subset V$ contains either an independent
set in the blue graph of size at least $|W|-\delta n$ or contains at
least $c_{\delta}n^2$ blue edges. Since $\delta n=\alpha^2 n <k$, then the red
graph does not contain $K_{1,k}$ as a subgraph.
Suppose for contradiction that there is an induced copy of $H$ in
the blue graph, and let $W$ be the vertex set of this copy.
The blue graph induced by $W$ has
$e(H)<c_{\delta}k^2\leq c_{\delta}n^2$ edges.
Therefore it contains an independent set of size at least $|W|-\delta n \geq
|W|-\alpha k=(1-\alpha)k$, contradicting the fact that $H$ has no
independent set of size $(1-\alpha)k$. Therefore, there are no
induced copies of $H$ in the blue graph.\qed

\section{Concluding Remarks}
\begin{itemize}
\item
All of the results in this paper concerning induced subgraphs can be extended to
many colors. One such multicolor result
was already proved in Section \ref{moreoninduced} (see Lemma \ref{lemmaerdoshajnal2}),
and we use here the notation from that section. For example, one can obtain the following
generalization of Theorem \ref{main}. For $k \geq 2$, let $\Psi:E(K_{k}) \rightarrow [r]$ be
an edge-coloring of the complete graph $K_{k}$ and $\Phi:E(K_n) \rightarrow [s]$ be
a $\Psi$-free edge-coloring of the complete graph $K_n$. Then there is a constant $c$ so that for
every $\epsilon \in (0,1/2)$,  there is a subset $W \subset K_n$ of size at least $2^{-crk (\log
\frac{1}{\epsilon})^2}n$ and a color $i \in [r]$ such that the edge
density of color $i$ in $W$ is at most $\epsilon$.
Since the proofs of this statement and other generalizations can be obtained using our key lemma in essentially the same way as
the proofs of the results that we already presented (which correspond to the two color case), we
do not include them here.

\item
It would be very interesting to get a better estimate in Theorem \ref{main}. This will immediately
give an improvement of the best known result for Erd\H{o}s-Hajnal conjecture on the size of the maximum
homogeneous set in $H$-free graphs. We believe that our bound can be strengthened as follows.

\begin{conjecture}\label{mainconjecture}
For each graph $H$, there is a constant $c(H)$ such that if
$\epsilon \in (0,1/2)$ and $G$ is a $H$-free graph on $n$ vertices,
then there is an induced subgraph of $G$ on at least
$\epsilon^{c(H)}n$ vertices that has edge density either at most
$\epsilon$ or at least $1-\epsilon$.
\end{conjecture}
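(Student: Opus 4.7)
My approach would proceed in two parallel tracks, recognizing at the outset that this conjecture is considerably deeper than Theorem~\ref{main}. The source of the $(\log 1/\epsilon)^2$ factor in the exponent of Theorem~\ref{main} is transparent: Lemma~\ref{useful} only tightens the density from $\epsilon$ to $3\epsilon/2$ per round, so $\Theta(\log 1/\epsilon)$ rounds are needed and each costs a factor of $\epsilon^k$. To reach $\epsilon^{c(H)}$ one must dispense with this gradual descent and instead execute a single large jump from constant density down to density $\epsilon$.

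The first track is to build exactly such a one-shot density reduction. I would try to strengthen Lemma~\ref{lemmaerdoshajnal} so that every $H$-free graph contains not merely two subsets of size $\epsilon^{k-1}n/k$ with one-sided density $\epsilon$ between them, but two subsets of size $\epsilon^{c(H)}n$ between which the density is already $\leq \epsilon^{2}$ (or $\geq 1-\epsilon^{2}$). The cleanest attempt is to generalise the embedding process in the proof of Lemma~\ref{lemmaerdoshajnal}: instead of growing a single potential copy of $H$ vertex by vertex, grow a branching family of partial copies in parallel, using a dependent-random-choice-type count to show that the pool of candidate extensions at each step shrinks by at most a polynomial factor in $\epsilon$. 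If this survives all $k$ steps, one is left with two sets of size $\epsilon^{c(H)} n$ across which the red/green coloring forced by $H$-freeness has density $\leq \epsilon^{2}$ or $\geq 1-\epsilon^{2}$. Given such a strengthened bipartite lemma, the averaging step at the end of the proof of Lemma~\ref{useful} (turning a bipartite density bound into an internal density bound on the union) delivers the conjecture in one shot.

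The second track is structural: use the Gallai modular decomposition to reduce to the case of prime $H$, and then attack the conjecture one prime $H$ at a time. The base cases $H = K_{2}$ (trivial), bipartite $H$ (where a dependent-random-choice argument together with a K\H{o}v\'ari--S\'os--Tur\'an count should give the polynomial-in-$\epsilon$ control), and $H = K_{r}$ (where supersaturation for cliques is already well understood) look the most tractable; one would then try to show that the family of graphs $H$ for which the conjecture holds is closed under substitution, in the spirit of the Alon--Pach--Solymosi preservation theorem for the Erd\H{o}s--Hajnal conjecture.

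The main obstacle is that Conjecture~\ref{mainconjecture} already implies the Erd\H{o}s--Hajnal conjecture: setting $\epsilon = n^{-1/(2c(H))}$ yields an induced subgraph $W$ of size $|W| \geq n^{1/2}$ whose edge density is at most $n^{-1/(2c(H))}$ or at least $1-n^{-1/(2c(H))}$, and a Tur\'an-type bound applied to $G[W]$ or its complement then produces a homogeneous set of size $n^{\Omega(1/c(H))}$. Since Erd\H{o}s--Hajnal has resisted substantial progress for decades, a full proof of Conjecture~\ref{mainconjecture} would be a major breakthrough, and the one-shot strengthening of Lemma~\ref{lemmaerdoshajnal} sketched above is precisely the place where present techniques break down: maintaining polynomial-in-$\epsilon$ density control throughout a $k$-step branching embedding process appears to require structural information about $H$-free graphs that is not currently available. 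A realistic intermediate goal is therefore to verify the conjecture for concrete small $H$ (for example $H = P_{4}$, where the cograph structure makes the statement essentially trivial, or $H = C_{4}$) and, through those cases, to isolate which features of $H$ actually drive the one-shot improvement.
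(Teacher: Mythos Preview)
The statement is a \emph{conjecture}, not a theorem: the paper does not prove it and explicitly presents it as open in the concluding remarks. There is therefore no proof in the paper to compare your proposal against. Your proposal is not a proof either, and you are honest about this; what you have written is a research outline together with an explanation of why the problem is hard.

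On the substance of that outline, your diagnosis matches the paper's. You correctly trace the $(\log 1/\epsilon)^2$ loss in Theorem~\ref{main} to the $\Theta(\log 1/\epsilon)$ iterations of Lemma~\ref{useful}, each costing a factor $\epsilon^{k}$. You also correctly observe that Conjecture~\ref{mainconjecture} implies the Erd\H{o}s--Hajnal conjecture; the paper makes exactly this observation immediately after stating the conjecture (with the choice $\epsilon = n^{-1/(c(H)+1)}$, essentially the same as yours). This is precisely why the paper leaves it as a conjecture rather than attempting a proof.

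Your two proposed tracks are reasonable as speculation, but you should be aware that neither is close to a proof. The ``one-shot'' strengthening of Lemma~\ref{lemmaerdoshajnal} you sketch---getting bipartite density $\epsilon^2$ on sets of size $\epsilon^{c(H)}n$ in a single pass---is essentially a restatement of the conjecture in bipartite form and is not known for any nontrivial $H$. The structural track via substitution closure is modeled on Alon--Pach--Solymosi, but note that even for Erd\H{o}s--Hajnal that reduction does not resolve the prime cases (e.g.\ $C_5$ was open for decades), and the density version you need here is strictly stronger. In short: your assessment that a full proof would be a major breakthrough is correct, and the paper agrees.
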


\noindent
This conjecture if true would imply the Erd\H{o}s-Hajnal
conjecture. Indeed, take $\epsilon=n^{-\frac{1}{c(H)+1}}$. Then
every $H$-free graph $G$ on $n$ vertices contains an induced
subgraph on at least $\epsilon^{c(H)}n=n^{\frac{1}{c(H)+1}}$
vertices that has edge density at most $\epsilon$ or at least
$1-\epsilon$. Note that this induced subgraph or its
complement has average degree at most $1$, which implies that it
contains a clique or independent set of size at least
$\frac{1}{2}n^{\frac{1}{c(H)+1}}$.

\item
One of the main remaining open problems on induced Ramsey numbers is
a beautiful conjecture of Erd\H{o}s which states that there exists a
positive constant $c$ such that $r_{\textrm{ind}}(H) \leq 2^{ck}$
for every graph $H$ on $k$ vertices. This, if true, will show that
induced Ramsey numbers in the worst case have the same order of
magnitude as ordinary Ramsey numbers. Our results here suggest that
one can attack this problem by studying 2-edge-colorings of a random
graph with edge probability $1/2$. It looks very plausible that for
sufficiently large constant $c$, with high probability random graph
$G(n,1/2)$ with $n\geq 2^{ck}$ has the property that any of its
2-edge-colorings contains every graph on $k$ vertices as an induced
monochromatic subgraph. Moreover, maybe this is even true for every
sufficiently pseudo-random graph with edge density $1/2$.

\item The results on induced Ramsey numbers of sparse graphs naturally
lead to the following questions. What is the asymptotic
behavior of the maximum of induced Ramsey numbers over all trees on
$k$ vertices? We have proved $r_{\textrm{ind}}(T)$ is superlinear in
$k$ for some trees $T$. On the other hand, Beck
\cite{Be} proved that $r_{\textrm{ind}}(T)=O\left( k^{2}\log^2
k\right)$ for all trees $T$ on $k$ vertices.

For induced Ramsey numbers of bounded degree graphs, we proved a
polynomial upper bound with exponent which is nearly linear in the
maximum degree. Can this be improved further, e.g., is it true that
the induced Ramsey number of every $n$-vertex graph with maximum
degree $d$ is at most a polynomial in $n$ with exponent independent
of $d$? It is known that the usual Ramsey numbers of bounded degree
graphs are linear in the number of vertices.
\end{itemize}

\vspace{0.2cm} \noindent {\bf Acknowledgment.}\, We'd like to thank
Janos Pach and Csaba T\'oth for helpful comments on an early stage
of this project and Steve Butler and Philipp Zumstein for carefully
reading this manuscript.

\end{document}